\documentclass[10pt, a4paper, reqno]{amsart}
\usepackage[foot]{amsaddr}
\usepackage{lmodern}
\usepackage{graphicx} 
\usepackage{amsthm, amsmath, amsfonts, amssymb, amscd, enumerate, enumitem, esint, xcolor, url, mathrsfs}
\usepackage[colorlinks]{hyperref}
\usepackage[utf8]{inputenc}
\usepackage{mathtools}
\usepackage{enumerate}
\usepackage[noadjust]{cite}

\newtheorem{ThA}{Theorem}

\newtheorem{thm}{Theorem}[section]
\newtheorem{cor}[thm]{Corollary}
\newtheorem{lem}[thm]{Lemma}
\newtheorem{prop}[thm]{Proposition}
\theoremstyle{definition}

\theoremstyle{remark}
\newtheorem{rem}[thm]{Remark}

\DeclareMathOperator{\supp}{supp}

\newcommand{\RH}{\textup{RH}}

\numberwithin{equation}{section}
\usepackage[defaultlines=2, all]{nowidow}
\setnoclub[2]
\allowdisplaybreaks

\setlist[enumerate,1]{label=(\alph*)}

\begin{document}

	\title[On $m$-order logarithmic Schr\"odinger operator]
	{On $m$-order logarithmic Schr\"odinger operator}

	\author[J. J. Betancor]{Jorge J. Betancor$^1$}
        \address{$^1$Departamento de An\'alisis Matem\'atico, Universidad de La Laguna,\newline
	Campus de Anchieta, Avda. Astrof\'isico S\'anchez, s/n,\newline
	38721 La Laguna (Sta. Cruz de Tenerife), Spain}
	\email[J. J. Betancor]{jbetanco@ull.es}
	
	\author[E. Dalmasso]{Estefan\'ia Dalmasso$^{2}$}
	\address{$^2$Instituto de Matem\'atica Aplicada del Litoral, UNL, CONICET,     FIQ.\newline 
    Colectora Ruta Nac. Nº 168, Paraje El Pozo,\newline S3007ABA, Santa Fe, Argentina}
	\email[E. Dalmasso]{edalmasso@santafe-conicet.gov.ar}
	
	\author[P. Quijano]{Pablo Quijano$^{2}$}
	\email[P. Quijano]{pquijano@santafe-conicet.gov.ar}

     \author[L. Rodríguez-Mesa]{Lourdes Rodríguez-Mesa$^1$}  
        \email[L. Rodríguez-Mesa]{lrguez@ull.edu.es}
	
	\date{\today}
	\subjclass{}
	
	\keywords{}

\begin{abstract}
In this paper we study the logarithm of order $m$ of the Schr\"odinger operator $\mathcal L_V$ in $\mathbb R^d$, for certain nonnegative potentials $V$. First, the operator $\log^m\mathcal L_V$, $m\in \mathbb N$, is defined by using the spectral measure associated with the self-adjoint operator $\mathcal L_V$ on a suitable subspace of $L^2(\mathbb R^d)$. Then, the semigroup of operators $\{T_t^V\}_{t>0}$ generated by $\mathcal L_V$ allows us to extend the definition of $\log^m\mathcal L_V$ to a wider class of Lipschitz functions. By using logarithmic operators $\log^m\mathcal L_V$, $m\in \mathbb N$, we prove Taylor expansions for the fractional powers $\mathcal L_V^s$ and $\mathcal L_V^{-s}$ with respect to the order $s\in (0,1)$, where the convergence is understood in $L^p(\mathbb R^d)$, $1<p<\infty$.   
\end{abstract}

\maketitle

\section{Introduction}

The logarithmic Laplacian operator, ${\rm log}(-\Delta)$, is a nonlocal operator that has been studied in various settings and has become an active area of research in recent years (\cite{ChChH, ChV2, ChV1, ChW, RCh, RChX, Feu1, Feu2, JSW, LW, Lee, Lee2, LL}). 
In \cite{ChW}, Chen and Weth defined the logarithmic Laplacian operator 
\[(\log(-\Delta)f)(x) = \frac{d}{ds}(-\Delta)^sf(x)\big\vert_{s=0},\] 
for every $f\in C_c^\infty(\mathbb R^d)$, the space of $C^\infty$ functions with compact support in $\mathbb R^d$, and established the following pointwise representation of $\log(-\Delta)$. 

\begin{ThA}[{\cite[Theorem~1.1]{ChW}}]\label{ThA}
Let $f\in C_c^\infty(\mathbb R^d)$. Then,
\begin{align*}
(\log(-\Delta)f)(x)
=c_d\int_{\mathbb R^d}\frac{f(x)\mathcal X_{B(x,1)}(y)-f(y)}{|x-y|^d}dy+\alpha_df(x),\quad x\in \mathbb R^d,
\end{align*}
where
$c_d=\pi ^{-d/2}\Gamma (d/2)$ and $\alpha _d=2\log 2+ \psi (d/2)-\gamma$, where $\gamma$ is the Euler--Mascheroni constant and $\psi =\Gamma'/\Gamma$ is the Digamma function.
\end{ThA}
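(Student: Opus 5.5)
The plan is to work on the Fourier side. The only outside facts needed are the explicit kernel of $(-\Delta)^s$ and a few classical Fourier transforms.

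\textbf{Step 1: the operator as a Fourier multiplier.} For $f\in C_c^\infty(\mathbb R^d)$ we have $\widehat{(-\Delta)^sf}(\xi)=|\xi|^{2s}\hat f(\xi)$. Since $\hat f$ is a Schwartz function, dominated convergence lets us differentiate in $s$ under the integral at $s=0$. This gives $\widehat{\log(-\Delta)f}(\xi)=2\log|\xi|\,\hat f(\xi)$, and the derivative exists pointwise and uniformly in $x$. So $\log(-\Delta)$ is the Fourier multiplier with symbol $2\log|\xi|$.

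\textbf{Step 2: the operator on the right-hand side.} Write the right-hand side as $c_d\,(f*K)(x)+\alpha_d f(x)$, where
\[K(z)=\frac{\mathcal X_{B(0,1)}(z)}{|z|^d}\]
is understood as a principal value on $B(0,1)$ (pairing with $f(x)-f(x-z)$), and $K(z)=-|z|^{-d}$ for $|z|\ge 1$. Both pieces define tempered distributions. Inside the ball, $f(x)-f(x-z)=O(|z|)$, and the oddness of the linear term keeps it harmless. Outside the ball, $|z|^{-d}f(x-z)$ is integrable.

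It then suffices to show that the Fourier transform of the distribution $T f= c_d\int (f(x)\mathcal X_{B(x,1)}(y)-f(y))|x-y|^{-d}dy$ has symbol $2\log|\xi|-\alpha_d$. By radiality and scaling, this symbol has the form $m(\xi)=\mu(|\xi|)$. Rescaling $z\mapsto z/\lambda$ in the integral changes the cutoff radius from $1$ to $\lambda$. The difference of the two kernels is $|z|^{-d}\mathcal X_{1<|z|<\lambda}$ acting on $f(x)$ only, which contributes $c_d\,|S^{d-1}|\log\lambda=2\log\lambda$, since $c_d|S^{d-1}|=2$. Hence $\mu(\lambda r)=\mu(r)+2\log\lambda$, so $m(\xi)=2\log|\xi|+C$ for a constant $C$.

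\textbf{Step 3: identifying the constant (main obstacle).} It remains to show $C=-\alpha_d$. I would test against the Gaussian $f(x)=e^{-|x|^2/2}$ evaluated at $x=0$. The alternative is the limit
\[\frac{d}{ds}\Big|_{s=0}\,C_{d,s}\int\frac{f(x)-f(y)}{|x-y|^{d+2s}}dy,\]
taken from the Riesz-type kernel formula for $(-\Delta)^s$, using $C_{d,s}=s\,4^s\Gamma(d/2+s)/(\pi^{d/2}\Gamma(1-s))$.

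I prefer the derivative route. Split the integral into $|z|<1$ and $|z|\ge1$. The near part tends to $c_d\int_{|z|<1}(f(x)-f(x-z))|z|^{-d}dz$, because $C_{d,s}/s\to c_d$. The far part is
\[C_{d,s}\Big(f(x)\,|S^{d-1}|/(2s)-\int_{|z|\ge1}f(x-z)|z|^{-d-2s}dz\Big).\]
Expanding to first order in $s$ gives $f(x)\,\frac{d}{ds}(C_{d,s}|S^{d-1}|/(2s))|_{s=0}$, and this equals
\[\Big(\log 4+\psi(d/2)+\psi(1)\Big)f(x)=\big(2\log2+\psi(d/2)-\gamma\big)f(x).\]
Since $(-\Delta)^0f=f$, the derivative at $s=0$ is exactly the stated formula. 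The main obstacle is justifying the interchange of $d/ds$ with the integrals uniformly near $s=0$. This is handled by dominated convergence with bounds $|f(x)-f(x-z)|\le C|z|^2$ (after symmetrization) on $|z|<1$, and $|z|^{-d-2s}\le|z|^{-d}$ on $|z|\ge1$. Doing so makes Step 2 unnecessary, though it serves as a consistency check.
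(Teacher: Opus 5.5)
Your derivative route is correct and is by itself a complete proof. Steps~1 and~2 can simply be dropped, since $\log(-\Delta)$ is defined here as the $s$-derivative at $0^+$ of $(-\Delta)^s f(x)$. The bookkeeping checks out: $C_{d,s}/s\to c_d$, $c_d|S^{d-1}|=2$, and $\phi(s):=C_{d,s}|S^{d-1}|/(2s)=4^s\Gamma(d/2+s)/(\Gamma(d/2)\Gamma(1-s))$ satisfies $\phi(0)=1$ and $\phi'(0)=2\log 2+\psi(d/2)-\gamma=\alpha_d$. Two minor points on the write-up:
\begin{itemize}
\item You leave implicit that the far term $-C_{d,s}\int_{|z|\ge1}f(x-z)|z|^{-d-2s}\,dz$ contributes $-c_d\int_{|z|\ge1}f(x-z)|z|^{-d}\,dz$ to the derivative. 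This is what produces the $-f(y)$ part of the kernel outside $B(x,1)$.
\item For $s\le 1/4$ the bound $|f(x)-f(x-z)|\le C|z|$ already gives an integrable majorant $C|z|^{1/2-d}$ on $B(0,1)$. So neither symmetrization nor a principal value is needed, and the inner integral is absolutely convergent.
\end{itemize}

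The paper itself gives no proof of this statement; it is quoted from Chen--Weth, whose argument is essentially your kernel-differentiation computation. The technique the paper uses for the analogous results for $\mathcal L_V$ (Theorem~\ref{ThB}, Proposition~\ref{PropLips}) is different. It starts from the semigroup formula $\frac{1}{\Gamma(-s)}\int_0^\infty (T_tf-f)\,t^{-s-1}\,dt$ and splits space at $B(x,1)$ and time at $\rho(x)^2$. Specialized to $V=0$ with time cutoff $1$, this gives the same kernel, because $\int_0^\infty T_t(z)\,\frac{dt}{t}=c_d|z|^{-d}$. The constant, however, appears as $\int_0^1\!\int_{|z|\ge1}T_t(z)\,dz\,\frac{dt}{t}-\int_1^\infty\!\int_{|z|<1}T_t(z)\,dz\,\frac{dt}{t}-\gamma$. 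Identifying it with $\alpha_d$ takes an extra incomplete-Gamma computation: the difference of the two double integrals equals $\mathbb E[\log(4X)]=\log 4+\psi(d/2)$ for $X$ Gamma-distributed with shape $d/2$. Your route reads $\alpha_d$ off directly from the normalizing constant $C_{d,s}$, but it relies on the explicit kernel of $(-\Delta)^s$. The semigroup route needs only heat-kernel bounds, which is why it extends to the non-Markovian semigroup $\{T_t^V\}_{t>0}$, at the price of a non-constant correction term $K(x)$.
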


Moreover, in \cite{Ch}, Chen studied the $m$-order logarithm of the Laplacian denoted by $\mathcal L_m$, for every $m\in \mathbb N$, which is the operator with the symbol $(2\log|x|)^m$ with respect to the Fourier transform. Motivated by the following well-known asymptotic expansion for $r>0$ and $m\in \mathbb N$,
\[
    r^s=1+\sum_{n=1}^m\frac{s^n}{n!}(\log r)^n+o(s^m),\quad \mbox{ as }s\rightarrow 0^+,
\]
and using the logarithmic Laplacian of higher order, Chen 
established asymptotic expansions for the fractional powers of the Laplacian $(-\Delta)^s$,  (see~\cite[Theorem~1.2]{Ch}), and for the Riesz potentials $(-\Delta)^{-s}$,  (see~\cite[Theorem~1.1]{Ch}), when $s\rightarrow 0^+$.

In this paper we study the $m$-order logarithm of the Schr\"odinger operator $\mathcal L_V$ in $\mathbb R^d$, where $V$ is a nonnegative potential (the first order logarithm of $\mathcal L_V$ has been analyzed in \cite{BDFQ} and \cite{BdLR}). Our objective is to obtain asymptotic expansions for the positive and negative fractional powers of the Schr\"odinger operator $\mathcal L_V$ through its higher order logarithmic operators.

We consider a nonnegative measurable function $V\in L^1_\textup{loc}(\mathbb R^d)$ and define the following closed and positive sesquilinear form $T$ by
\[
T(f,g)=\int_{\mathbb R ^d}\nabla f(x)\overline{\nabla g(x)}dx+\int_{\mathbb R^d}V(x)f(x)\overline{g(x)}dx,\quad f,g\in D(T),
\]
where $D(T)=\{f\in L^2(\mathbb R^d): \nabla f\in L^2(\mathbb R^d) \mbox{ and }\sqrt{V}f\in L^2(\mathbb R^d)\}$. Note that $D(T)$ is dense in $L^2(\mathbb R^d)$. According to  \cite[Theorem VIII.15]{RS}, there exists a self-adjoint operator $\mathcal L_V: D(\mathcal L_V)=D(T)\rightarrow L^2(\mathbb R^d)$ such that
\[
\langle \mathcal L_Vf,g\rangle=T(f,g),\quad f,g\in D(\mathcal L_V).
\]
Here $\langle \cdot,\cdot\rangle$ denotes the usual inner product in $L^2(\mathbb R^d)$. Note that $C_c^\infty (\mathbb R^d)\subset D(\mathcal L_V)$, and $\mathcal L_V\phi =(-\Delta +V)\phi$, for every $\phi \in C_c^\infty (\mathbb R^d)$.

Since $V\geq 0$, the spectrum $\sigma(\mathcal L_V)$ of $\mathcal L_V$ is contained in $[0,\infty )$. Moreover, if $V(x)>0$, a.e. $x\in \mathbb R^d$, $0$ is not an eigenvalue of $\mathcal L_V$. We assume in the sequel that $V(x)>0$, a.e. $x\in \mathbb R^d$. Then, there exists a unique spectral measure $E_V$ whose support is contained in $\sigma (\mathcal L_V)$ such that
\[
\mathcal L_Vf=\int_0^\infty \lambda dE_V(\lambda )f,\quad f\in D(\mathcal L_V),
\]
and $D(\mathcal L_V)=\{f\in L^2(\mathbb R^d): \int_0^\infty \lambda ^2d\mu_{f,f}^V(\lambda )<\infty\}$. Here, for every $f,g\in L^2(\mathbb R^d)$, $\mu _{f,g}^V$ denotes the Borel measure defined by $\mu_{f,g}^V(U)=\langle E_V(U)f,g\rangle$, for each Borel set $U\subset (0,\infty)$.

If $\Phi $ is a Borel measurable function in $\mathbb R$, we define the operator $\Phi(\mathcal L_V)$ by
\[
\Phi (\mathcal L_V)f=\int_0^\infty \Phi (\lambda )dE_V(\lambda )f,\quad f\in D(\Phi (\mathcal L_V)),
\]
where $D(\Phi (\mathcal L_V))=\{f\in L^2(\mathbb R^d): \int_0^\infty |\Phi (\lambda )|^2d\mu_{f,f}^V(\lambda )<\infty\}$. In particular, we define, for every $s\in \mathbb{R}\setminus\{0\}$, the $s$-power $\mathcal{L}_V^s$ of $\mathcal{L}_V$ by
\[
\mathcal{L}_V^sf=\int^\infty_0\lambda^sdE_V(\lambda)f,\quad f\in D(\mathcal{L}_V^s),
\]
being $D(\mathcal{L}_V^s)=\{f\in L^2(\mathbb{R}^d): \int_0^\infty\lambda^{2s}d\mu_{f,f}^V(\lambda)<\infty\}$, and the logarithmic Schr\"odinger operator $\log \mathcal L_V$ by
\[
(\log \mathcal L_V)f=\int_0^\infty \log \lambda dE_V(\lambda )f,\quad f\in D(\log \mathcal L_V), 
\]
where $D(\log  \mathcal L_V)=\{f\in L^2(\mathbb R^d): \int_0^\infty |\log \lambda |^2d\mu_{f,f}^V(\lambda )<\infty\}$.

For every $t>0$, we define the operator $T_t^V$ by
\[
T_t^Vf=\int_0^\infty e^{-\lambda t}dE_V(\lambda )f,\quad f\in L^2(\mathbb R^d).
\]
The family $\{T_t^V\}_{t>0}$ is a semigroup of contractions in $L^2(\mathbb R^d)$ whose infinitesimal generator is $\mathcal L_V$. For every $t>0$, there exists a measurable function $T_t^V:\mathbb R^d\times \mathbb R^d\rightarrow (0,\infty)$ such that
\begin{equation}\label{1.4}
T_t^V(f)(x)=\int_{\mathbb R^d}T_t^V(x,y)f(y)dy,\quad f\in L^2(\mathbb R^d). 
\end{equation}
The integral representation \eqref{1.4} allows us to extend, for every $t>0$, the operator $T_t^V$ from $L^2(\mathbb R^d)\cap L^p(\mathbb R^d)$ to $L^p(\mathbb R^d)$ as a contraction from $L^p(\mathbb R^d)$ into itself, for each $1\leq p<\infty$. Thus, $\{T_t^V\}_{t>0}$ is a semigroup of contractions in $L^p(\mathbb R^d)$, $1\leq p<\infty$.

We denote by $T_t(z)$, $t>0$ and $z\in \mathbb R^d$, the Euclidean heat kernel, that is, $T_t(z)=(4\pi t)^{-d/2}e^{-|z|^2/(4t)}$. By the Feynman-Kac formula we have
\begin{equation}\label{1.6}
0\leq T_t^V(x,y)\leq T_t(x-y),\quad x,y\in \mathbb R^d \mbox{ and }t>0.
\end{equation}

In \cite[Theorem 1.2]{BDFQ},  a pointwise representation of the logarithmic operator $\log \mathcal L_V$ for certain potentials $V$ was established. We say that $V\in L^1_\textup{loc} (\mathbb R^d)$ satisfies the $q$-reverse H\"older inequality, with $1<q<\infty$, denoted by $V\in \RH_q$, when there exists $C>0$ such that
\[
\left(\frac{1}{|B|}\int_BV(x)^qdx\right)^{1/q}\leq \frac{C}{|B|}\int_B V(x)dx,
\]
for every ball $B$ in $\mathbb R^d$. Functions in $\RH_q$ appeared in the studies of Gehring \cite{G} and Coifman and Fefferman \cite{CF}. It is common for the condition $V\in \RH_q$ to arise when harmonic analysis is developed in the Schr\"odinger setting. This can be seen, for instance, in \cite{BFHR, BHQ, Dz1, Dz2, DGMTZ, GHY, Sh1, ZT}.

Suppose in the sequel that $d\geq 3$ and $V\in \RH_q$ with $q>d/2$. We consider (see \cite{Sh2} and \cite[Definition 1.3]{Sh1}) the function $\rho :\mathbb R^d\rightarrow (0,\infty)$ given by
\[
\rho (x)=\sup\left\{r>0: \frac{1}{r^{d-2}}\int_{B(x,r)}V(y)dy\leq 1\right\}.
\]
This function $\rho$, also called the critical radius function, plays an important role in the study of harmonic analysis associated with Schr\"odinger operators. It holds that $\rho (x)>0$ for all $x\in \mathbb R^d$ (see \cite[\S1]{Sh1}).

When $V\in \textup{RH}_q$, with $q>d/2$, the estimate (\ref{1.6}) can be improved as follows (see \cite[Proposition 2]{DGMTZ}). For every $N\in \mathbb N$, there exists $C_N>0$ such that
\begin{equation}\label{acotTVimpr}
0\leq T_t^V(x,y)\leq \frac{C_N}{t^{d/2}}e^{-\frac{|x-y|^2}{5t}}\left(1+\frac{\sqrt{t}}{\rho (x)}+\frac{\sqrt{t}}{\rho (y)}\right)^{-N},\quad x,y\in \mathbb R^d\mbox{ and }t>0.
\end{equation}

A pointwise representation for the logarithmic Schr\"odinger operator $\log \mathcal L_V$ was established in \cite[Theorem 1.2]{BDFQ}.
\begin{ThA}\label{ThB}
Let $d\geq 3$ and $q>d/2$. Suppose that $V\in \RH_q$ and $f\in C_c^\infty (\mathbb R^d)$. We have that
\begin{align*}
    (\log \mathcal L_V)(f)(x)&=-\int_{B(x,1)}(f(y)-f(x))\int_0^\infty \frac{T_t^V(x,y)}{t}dtdy\\
    &\quad -\int_{\mathbb R^d\setminus B(x,1)}f(y)\int_0^\infty \frac{T_t^V(x,y)}{t}dtdy-K(x)f(x),
\end{align*}
for almost all $x\in \mathbb R^d$, where
\begin{align*}
  K(x)&=2\log \rho (x)+\int_0^{\rho (x)^2}\int_{\mathbb R^d}\frac{T_t^V(x,y)-T_t(x-y)}{t}dydt\\
  &-\int_0^{\rho (x)^2}\int_{\mathbb R^d\setminus B(x,1)}\frac{T_t^V(x,y)}{t}dydt+\int_{\rho (x)^2}^\infty \int_{B(x,1)}\frac{T_t^V(x,y)}{t}dydt+\gamma,\ x\in \mathbb R^d,
\end{align*}
where $\gamma$ denotes the Euler--Mascheroni constant.
\end{ThA}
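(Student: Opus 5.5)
The starting point will be the subordination identity for the logarithm: for every $\lambda>0$,
\[
\log\lambda=\int_0^\infty\frac{e^{-t}-e^{-\lambda t}}{t}\,dt ,
\]
a Frullani-type integral. For $f\in C_c^\infty(\mathbb R^d)$ we have $f\in D(\log\mathcal L_V)$; this will be checked by bounding $\int_0^\infty|\log\lambda|^2d\mu^V_{f,f}$, with the part near $\lambda=0$ controlled through the heat kernel bound \eqref{acotTVimpr}. Integrating the identity against $dE_V(\lambda)f$ and using Fubini (justified via $|e^{-t}-e^{-\lambda t}|/t$ estimates on $\mu_{f,f}^V$) will give
\[
(\log\mathcal L_V)f=\int_0^\infty\frac{e^{-t}f-T_t^Vf}{t}\,dt
\]
in $L^2(\mathbb R^d)$, and hence pointwise a.e. once the integral is shown to converge absolutely for a.e. $x$.

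Next, I write $T_t^Vf(x)=\int T_t^V(x,y)f(y)dy$ by \eqref{1.4}. I split $f(y)=(f(y)-f(x))\mathcal X_{B(x,1)}(y)+f(y)\mathcal X_{\mathbb R^d\setminus B(x,1)}(y)+f(x)\mathcal X_{B(x,1)}(y)$. This yields three pieces:
\begin{enumerate}
\item $-\int_{B(x,1)}(f(y)-f(x))\int_0^\infty T_t^V(x,y)t^{-1}dt\,dy$;
\item $-\int_{\mathbb R^d\setminus B(x,1)}f(y)\int_0^\infty T_t^V(x,y)t^{-1}dt\,dy$;
\item the remainder $f(x)\int_0^\infty t^{-1}\bigl(e^{-t}-\int_{B(x,1)}T_t^V(x,y)dy\bigr)dt$.
\end{enumerate}
Interchanging the $t$ and $y$ integrals in the first two requires $\int_0^\infty T_t^V(x,y)t^{-1}dt\lesssim |x-y|^{-d}$. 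This follows from \eqref{1.6} for small $t$ (the Gaussian gives $|x-y|^{-d}$) and from \eqref{acotTVimpr} with $N$ large for $t\to\infty$, where it is integrable at infinity. In the first piece, $|f(y)-f(x)|\lesssim|x-y|$ makes the integrand integrable near $x$.

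It remains to identify the remainder as $-K(x)f(x)$. I split the $t$-integral at $\rho(x)^2$. On $(\rho(x)^2,\infty)$ I keep $\int_{B(x,1)}T_t^V(x,y)t^{-1}dy$, which is finite by \eqref{acotTVimpr}, and $e^{-t}/t$ separately. On $(0,\rho(x)^2)$ I write
\[
\int_{B(x,1)}T_t^V=\int_{\mathbb R^d}(T_t^V-T_t)(x,y)\,dy+1-\int_{\mathbb R^d\setminus B(x,1)}T_t^V(x,y)\,dy,
\]
using $\int T_t=1$. The scalar terms then combine into $\int_0^{\rho^2}\frac{e^{-t}-1}{t}dt+\int_{\rho^2}^\infty\frac{e^{-t}}{t}dt=-2\log\rho(x)-\gamma$, by the classical identity $\int_0^a\frac{1-e^{-t}}{t}dt-\int_a^\infty\frac{e^{-t}}{t}dt=\log a+\gamma$. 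Collecting signs gives exactly $-K(x)f(x)$.

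The main obstacle is convergence of $\int_0^{\rho(x)^2}t^{-1}\int_{\mathbb R^d}(T_t^V(x,y)-T_t(x-y))dy\,dt$ near $t=0$. For this I would use the perturbation formula
\[
T_t-T_t^V=\int_0^t\int_{\mathbb R^d} T_{t-s}(\cdot-z)V(z)T_s^V(z,\cdot)\,dz\,ds .
\]
Combined with \eqref{1.6}, it gives $\int_{\mathbb R^d}|T_t^V-T_t|(x,y)\,dy\le\int_0^t\int_{\mathbb R^d} T_s(x-z)V(z)\,dz\,ds$. The standard Shen estimate for $V\in\RH_q$, $q>d/2$, namely $\int_{B(x,r)}V\lesssim r^{d-2}(r/\rho(x))^{\delta}$ for $r\le\rho(x)$ with $\delta=2-d/q>0$, then bounds this by $C(\sqrt t/\rho(x))^{\delta}$. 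That bound is integrable against $dt/t$ on $(0,\rho(x)^2)$. The term $\int_0^{\rho^2}\int_{|x-y|>1}T_t^V t^{-1}$ is also finite, by Gaussian decay for small $t$.
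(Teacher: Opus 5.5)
Your proposal is correct, but it reaches the formula by a different route from the one the paper relies on.

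\textbf{The paper's route.} The paper follows \cite{BDFQ}; the structure is visible in \eqref{sumI} and Proposition~\ref{PropLips}, and it goes through fractional powers.
\begin{enumerate}
\item $(\log\mathcal L_V)f$ is identified with the $L^2$-limit of $(\mathcal L_V^sf-f)/s$ as $s\to0^+$.
\item The power is written as $\mathcal L_V^sf=\frac{1}{\Gamma(-s)}\int_0^\infty (T_t^Vf-f)\,t^{-s-1}dt$, and this integral is split into the pieces $I_1,\dots,I_4$ of \eqref{sumI}.
\item The limit is taken pointwise by dominated convergence. The bounds, uniform in small $s$, come from \eqref{1.6}, \eqref{acotTVimpr} and the pointwise estimate \eqref{TtVTt}.
\item The term $2\log\rho(x)+\gamma$ in $K(x)$ arises as $-\frac{d}{ds}\big[\rho(x)^{-2s}/\Gamma(1-s)\big]_{s=0}$.
\end{enumerate}

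\textbf{Your route.} You represent $\log\lambda$ directly through the Frullani integral.
\begin{enumerate}
\item You need neither the limit in $s$ nor the identification of $\log\mathcal L_V$ with the $s$-derivative of $\mathcal L_V^s$.
\item The constant $\gamma$ comes from the exponential-integral identity instead of from $\Gamma'(1)$.
\item You only use the $y$-integrated bound $\int_{\mathbb R^d}|T_t^V(x,y)-T_t(x-y)|\,dy\lesssim(\sqrt t/\rho(x))^{\delta}$ for $t\le\rho(x)^2$. You derive it from Duhamel's formula and Shen's lemma rather than quoting \eqref{TtVTt}.
\end{enumerate}
Two remarks for the write-up:
\begin{enumerate}
\item In the Duhamel bound you must also handle the Gaussian tail $|x-z|>\rho(x)$. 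This is done with the doubling property of $V\,dx$, which gives polynomial growth of $\int_{B(x,r)}V$ for $r>\rho(x)$.
\item You do not need \eqref{acotTVimpr} for $\int_0^\infty T_t^V(x,y)\,t^{-1}dt\lesssim|x-y|^{-d}$: the Gaussian bound \eqref{1.6} alone gives it over all of $(0,\infty)$.
\end{enumerate}

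\textbf{Comparison.} For the first-order logarithm your argument is shorter and more self-contained. The paper's route pays off elsewhere. The same decomposition, with the weight $t^{-s-1}$, gives the pointwise limit \eqref{1.5} on the larger class $\textup{Lip}^\theta_{\rm loc}(\mathbb R^d)\cap L^1_0(\mathbb R^d)$, outside $L^2$. It can also be differentiated $k$ times in $s$. That is exactly how the paper obtains $\mathbb{L}\textup{og}^k\mathcal L_V$ and controls the Taylor remainders in Theorems~\ref{Th1.2} and~\ref{Th1.3}. Extending your approach to those would require Frullani-type representations of $(\log\lambda)^k$ with powers of $\log t$ in the weight, which is in effect what the $s$-derivatives produce.
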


Note that in contrast with the result in Theorem \ref{ThA}, the correction factor in Theorem \ref{ThB} is not constant. This difference is due to the fact that the semigroup $\{T_t^V\}_{t>0}$ is not Markovian, that is, $\int_{\mathbb R^d}T_t^V(x,y)dy$, $x\in \mathbb R^d$, does not coincide with the constant function $g(x)=1$, $x\in \mathbb R^d$, when $t>0$.

We now consider the operator $\mathbb{L}{\rm og}\, \mathcal L_V$ given by
\[
(\mathbb{L}{\rm og}\, \mathcal L_V)f=\frac{d}{ds}(\mathcal L_V^sf)\big\vert_{s=0^+}=\lim_{s\rightarrow 0^+}\frac{\mathcal L_V^sf-f}{s},\quad f\in D(\mathbb{L}{\rm og}\, \mathcal L_V),
\]
where the limit is understood in   $L^2(\mathbb R^d)$, 
being 
\begin{align*}
D(\mathbb{L}{\rm og}\, \mathcal L_V)&=\Bigg\{f\in L^2(\mathbb R^d): f\in D(\mathcal L_V^\beta),\mbox{ for some }\beta \in (0,1),\\
&\hspace{1cm}\mbox{ and }\lim_{s\rightarrow 0^+}\frac{\mathcal L_V^sf-f}{s}\mbox{ exists in }L^2(\mathbb R^d)\Bigg\}.
\end{align*}
In \cite[Theorem 1.1(a)]{BDFQ} it was proved that $(\log \mathcal L_V)f=(\mathbb{L}{\rm og}\, \mathcal L_V)f$, provided that $f\in D(\mathcal L_V^{s_0})\cap D(\log \mathcal L_V)$, for some $s_0\in (0,1)$. In a similar way, we can see that
\[
\lim_{s\rightarrow 0^+}\frac{\mathcal L_V^{-s}f-f}{s}=-(\log \mathcal L_V)f,\quad \mbox{ in }L^2(\mathbb R^d),
\]
for every $f\in D(\mathcal L_V^{-s_0})\cap D(\log \mathcal L_V)$, for some $s_0>0$. Moreover, in \cite[Theorem 1.2]{BDFQ} the following result was established .

\begin{ThA}\label{ThC}
    Let $d\geq 3$ and $q>d/2$. Suppose that $V\in \RH_q$ and $f\in C_c^\infty (\mathbb R^d)$. Then, for every $1<p\leq \infty$,
    \[
    \lim_{s\rightarrow 0^+}\frac{\mathcal L_V^sf-f}{s}=(\log \mathcal L_V)f,\quad \mbox{ in }L^p(\mathbb R^d).
    \]
\end{ThA}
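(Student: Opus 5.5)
The plan is to write $\mathcal L_V^s f$, for $f\in C_c^\infty(\mathbb R^d)$ and $s\in(0,1)$, through the semigroup,
\[
\mathcal L_V^sf=\frac{1}{\Gamma(-s)}\int_0^\infty \bigl(T_t^Vf-f\bigr)\frac{dt}{t^{1+s}} ,
\]
and to express $\log\mathcal L_V f$ by the analogous semigroup formula
\[
(\log\mathcal L_V)f=\int_0^\infty\bigl(e^{-t}f-T_t^Vf\bigr)\frac{dt}{t}.
\]
This follows from Frullani's identity $\log\lambda=\int_0^\infty (e^{-t}-e^{-\lambda t})\,dt/t$ together with the spectral theorem. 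It is consistent with Theorem \ref{ThB}. Both identities hold in $L^2$, and pointwise a.e. once we check absolute convergence.

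Using $1/\Gamma(-s)=-s/\Gamma(1-s)$ and $\int_0^\infty(e^{-t}-1)t^{-1-s}\,dt=\Gamma(-s)$, we will write
\[
\frac{\mathcal L_V^sf-f}{s}-(\log\mathcal L_V)f
\]
as a combination of two terms, integrated against the kernel $t^{-1}\bigl(\tfrac{t^{-s}}{\Gamma(1-s)}-1\bigr)$:
\begin{itemize}
\item[(i)] $\int_0^\infty (T_t^Vf-e^{-t}f)\,t^{-1}\bigl(\tfrac{t^{-s}}{\Gamma(1-s)}-1\bigr)dt$;
\item[(ii)] a leftover multiple of $f$ coming from $\frac{1}{s}\bigl(\frac{s\,\Gamma(-s)}{-\Gamma(1-s)}\cdot(\ldots)\bigr)$, i.e. from $e^{-t}$-integrals that produce $\Gamma$-function quotients.
\end{itemize}
Term (ii) tends to $0$ as $s\to0^+$ by explicit computation, since $\Gamma(1-s)\to1$.

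For term (i) we split the $t$-integral at $t=1$.

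\emph{Small $t$.} For $t\le 1$ we use the bound
\[
\|T_t^Vf-f\|_p\le C\,t\bigl(\|\Delta f\|_p+\|Vf\|_p\bigr).
\]
This is legitimate because $V\in\RH_q$ gives $V\in L^q_{\rm loc}$ with $q>d/2$, so $Vf\in L^p$ at least for $p\le q$. For larger $p$ we would instead use $\|T_t^Vf-f\|_p\lesssim t^{\varepsilon}$, obtained by interpolating via the kernel bound \eqref{acotTVimpr} and Duhamel's formula. We also have $|e^{-t}-1|\le t$. Hence the integrand is dominated by $t^{\varepsilon-1-s}$, which is integrable uniformly for $s\le s_0<\varepsilon$, while the factor $\bigl(\tfrac{t^{-s}}{\Gamma(1-s)}-1\bigr)\to0$ pointwise. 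Dominated convergence in Bochner form in $L^p$ then gives convergence to $0$.

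\emph{Large $t$.} For $t\ge1$ this is the main obstacle: we need $\int_1^\infty\|T_t^Vf\|_p\,\frac{dt}{t}<\infty$. We get it from \eqref{1.6}, since $\|T_t^Vf\|_p\le \|T_t\|_{L^1\to L^p}\|f\|_1\lesssim t^{-\frac d2(1-\frac1p)}\|f\|_1$, which decays because $p>1$ (for $p=\infty$ this is $t^{-d/2}$). The $e^{-t}f$ part is trivial. The kernel factor is bounded for $t\ge1$ and tends to $0$, so dominated convergence again applies.

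Combining the two ranges of $t$ with term (ii) proves the limit in every $L^p$, $1<p\le\infty$. Along the way this also justifies the a.e. identification of the semigroup formula with $\log\mathcal L_V f$ in $L^p$, since for $p=2$ it agrees with the spectral definition.
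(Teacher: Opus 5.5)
Your argument is correct, and it takes a genuinely different route from the paper. The paper does not reprove Theorem \ref{ThC}; it quotes it from \cite{BDFQ}. The comparable argument is the proof of Theorem \ref{Th1.2}, which for $n=1$ recovers the statement for $1<p<\infty$.

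That proof works pointwise. It splits $\int_0^\infty (T_t^V(f)(x)-f(x))t^{-s-1}dt$ as in \eqref{sumI}, around $f(x)\mathcal X_{B(x,1)}$, $T_t^V(1)(x)$, $T_t(1)(x)$ and the cut $t=\rho(x)^2$. It then controls the Taylor remainders in $s$ by Lemma \ref{techn} and estimates each piece with \eqref{1.6}, \eqref{acotTVimpr}, \eqref{TtVTt}, using a case analysis on $\rho(x)\le 1$ versus $\rho(x)>1$.

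You compare with $e^{-t}f$ instead. Since $\frac{1}{\Gamma(-s)}\int_0^\infty(e^{-t}-1)t^{-1-s}\,dt=1$, your term (ii) is identically zero, and
\[
\frac{\mathcal L_V^sf-f}{s}-(\log\mathcal L_V)f=-\int_0^\infty\bigl(T_t^Vf-e^{-t}f\bigr)\Bigl(\frac{t^{-s}}{\Gamma(1-s)}-1\Bigr)\frac{dt}{t}
\]
holds exactly. Everything then reduces to a small-time rate for $\|T_t^Vf-f\|_p$ and the large-time decay $\|T_t^Vf\|_p\lesssim t^{-\frac d2(1-\frac1p)}\|f\|_1$, which is exactly where $p>1$ enters.

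What your route buys:
\begin{itemize}
\item It is shorter and involves no powers of $\rho(x)$.
\item It gives convergence in $L^\infty(\mathbb R^d)$ globally. By contrast, the paper's bounds \eqref{Omega2}--\eqref{Omega4} degenerate as $\rho(x)\to 0$ or $\rho(x)\to\infty$, which is why part (b) of Theorem \ref{Th1.2} is only local.
\end{itemize}
What the paper's route buys is the explicit kernel representation of Theorem \ref{ThB}, with its correction term $K(x)$. It also gives pointwise formulas defining $\mathbb{L}\textup{og}^k\mathcal L_V$ on Lipschitz functions in $L^1_0(\mathbb R^d)$, outside the $L^2$ spectral framework on which your identification rests.

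The one step you need to tighten is the small-time rate when $p>q$. Interpolation gives no rate at $p=\infty$. Likewise \eqref{acotTVimpr}, being only an upper bound for the kernel, produces no rate. Use \eqref{TtVTt} instead, with $\supp f\subset B(0,R)$:
\begin{itemize}
\item For all $x$, $|T_t^Vf(x)-T_tf(x)|\le C\|f\|_\infty(\sqrt t/\rho(x))^{\delta}$, where $\delta=2-d/q$. On $B(0,2R)$, $\rho$ is bounded below by \cite[Proposition 1]{DGMTZ}, exactly as in the paper.
\item For $|x|\ge 2R$, $|T_t^Vf(x)|\le T_t(|f|)(x)\le Ct^{-d/2}e^{-|x|^2/(16t)}\|f\|_1$.
\item Also $\|T_tf-f\|_\infty\le t\|\Delta f\|_\infty$.
\end{itemize}
Together these give $\|T_t^Vf-f\|_p\lesssim t^{\delta/2}$ for $0<t\le 1$ and every $1\le p\le\infty$. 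This also makes your case split $p\le q$ versus $p>q$ unnecessary, as well as the identification of the $L^p$ generator on $f$. Alternatively, Duhamel's formula combined with $V\in L^q_{\rm loc}$, $q>d/2$, and H\"older's inequality gives the same rate.
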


We can write the above properties as follows:
\begin{equation}\label{1.2}
  \mathcal L_V^sf-f-s(\log \mathcal L_V)f=o(s),\quad \mbox{ as }s\rightarrow 0^+,\mbox{ in }L^2(\mathbb R^d),  
\end{equation}
when $f\in D(\mathcal L_V^{s_0})\cap D(\log \mathcal L_V)$, for some $s_0 \in (0,1)$, and 
\begin{equation}\label{1.3}
    \mathcal L_V^{-s}f-f+s(\log \mathcal L_V)f=o(s),\quad \mbox{ as }s\rightarrow 0^+,\mbox{ in }L^2(\mathbb R^d),   
\end{equation}
when $f\in D(\mathcal L_V^{-s_0})\cap D(\log \mathcal L_V)$, for some $s_0 \in (0,1)$.

We now aim to extend the asymptotic expansions in \eqref{1.2} and \eqref{1.3} to arbitrary order $m\in\mathbb{N}$, by means of higher order logarithmic operators.

We define, for every $m\in \mathbb N$, the operator $\log^m\mathcal L_V$ by
\[
(\log^m\mathcal L_V)f= \int_0^\infty (\log \lambda )^mdE_V(\lambda )f,\quad f\in D(\log^m\mathcal L_V),
\]
being $D(\log^m\mathcal L_V)=\{f\in L^2(\mathbb R^d): \int_0^\infty (\log \lambda )^{2m}d\mu _{f,f}^V(\lambda )<\infty \}$.

\begin{thm}\label{Th1.1}
Let $m\in \mathbb N$, $s_0\in (0,1]$, and $V$ be a measurable function such that $V(x)>0$ a.e. $x\in\mathbb R^d$.
\begin{enumerate}[label=\textnormal{(\alph*)}]
    \item \label{itm: Th1.1-a} If $f\in D(\log^{m+1}\mathcal L_V)\cap D(\mathcal L_V^{s_0})$, then 
\[
\mathcal L_V^sf-\sum_{k=0}^m\frac{s^k}{k!}(\log^k\mathcal L_V)f=o(s^m),\quad \mbox{ as }s\rightarrow 0^+,\;\mbox{ in }L^2(\mathbb R^d).
\]
\item \label{itm: Th1.1-b} If $f\in D(\log ^{m+1} \mathcal L_V)\cap D(\mathcal L_V^{-s_0})$, then
\[
\mathcal L_V^{-s}f-\sum_{k=0}^m(-1)^k\frac{s^k}{k!}(\log^k\mathcal L_V)f=o(s^m),\quad \mbox{ as }s\rightarrow 0^+,\; \mbox{ in }L^2(\mathbb R^d).
\]
\end{enumerate}
\end{thm}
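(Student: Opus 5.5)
The plan is to reduce everything to the spectral representation and apply dominated convergence with respect to the finite measure $\mu^V_{f,f}$. Since $V>0$ a.e., $E_V(\{0\})=0$, so all integrals are over $(0,\infty)$ and $\log\lambda$ is finite $\mu^V_{f,f}$-a.e. Set $R_m(s,\lambda)=\lambda^s-\sum_{k=0}^m \frac{s^k}{k!}(\log\lambda)^k$. First I check that $f$ lies in the domain of each operator involved: $D(\log^{m+1}\mathcal L_V)\subset D(\log^k\mathcal L_V)$ for $0\le k\le m+1$, because $|\log\lambda|^{2k}\le 1+|\log\lambda|^{2(m+1)}$ and $\mu^V_{f,f}$ is finite. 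Also, for $0<s\le s_0$, $\lambda^{2s}\le 1+\lambda^{2s_0}$, so $f\in D(\mathcal L_V^s)$. Then the functional calculus gives
\[
\Big\|\mathcal L_V^sf-\sum_{k=0}^m\frac{s^k}{k!}(\log^k\mathcal L_V)f\Big\|_2^2=\int_0^\infty |R_m(s,\lambda)|^2\,d\mu^V_{f,f}(\lambda),
\]
and it suffices to show that $s^{-2m}\int_0^\infty|R_m(s,\lambda)|^2\,d\mu^V_{f,f}\to0$.

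Pointwise convergence is immediate: for fixed $\lambda>0$, $s\mapsto\lambda^s=e^{s\log\lambda}$ is entire, so $R_m(s,\lambda)=O(s^{m+1})$ and $s^{-m}R_m(s,\lambda)\to0$. The key step is a uniform dominating function for $0<s\le s_0/2$. By Taylor's formula with Lagrange remainder applied to $e^{u}$ at $u=s\log\lambda$,
\[
|R_m(s,\lambda)|\le \frac{|s\log\lambda|^{m+1}}{(m+1)!}\,\max\{1,\lambda^{s}\}.
\]
So $s^{-m}|R_m(s,\lambda)|\le s|\log\lambda|^{m+1}\max\{1,\lambda^s\}$. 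For $\lambda\le1$ this is bounded by $|\log\lambda|^{m+1}$, which is square integrable because $f\in D(\log^{m+1}\mathcal L_V)$. For $\lambda>1$ the factor $\lambda^s$ is the obstacle, since $\log^{m+1}$ alone does not control it. I would instead use the crude bound $s^{-m}|R_m|\le s^{-m}\lambda^s+\sum_k s^{k-m}|\log\lambda|^k/k!$, which is not uniform in $s$. The cleaner route is to interpolate: for $\lambda>1$, $s\le s_0/2$, bound $|\log\lambda|^{m+1}\lambda^{s}\le |\log\lambda|^{2(m+1)}+\lambda^{2s}\le |\log\lambda|^{2(m+1)}+1+\lambda^{2s_0}$. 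Squaring, I need $|\log\lambda|^{2(m+1)}\lambda^{2s}$ to be integrable. For this I use $2ab\le a^2+b^2$ in a weighted form:
\[
|\log\lambda|^{2(m+1)}\lambda^{2s}\le C\big(|\log\lambda|^{4(m+1)}\mathbf 1\ldots\big),
\]
but that requires too much. The better option is to write $\lambda^{2s}|\log\lambda|^{2(m+1)}\le C_{m,s_0}\lambda^{2s_0}$ for $\lambda>1$ and $s\le s_0/2$, because $|\log\lambda|^{2(m+1)}\le C\lambda^{s_0}$. Since $\lambda^{2s_0}$ is integrable, this gives a dominating function
\[
G(\lambda)=|\log\lambda|^{2(m+1)}\mathbf 1_{(0,1]}(\lambda)+C\lambda^{2s_0}\mathbf 1_{(1,\infty)}(\lambda),
\]
which is $\mu^V_{f,f}$-integrable. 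Dominated convergence then yields (a).

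Part (b) is symmetric. Apply the same argument to $\lambda^{-s}=e^{-s\log\lambda}$ with the terms $(-1)^k s^k(\log\lambda)^k/k!$. The roles of $\lambda\le1$ and $\lambda>1$ swap: for $\lambda\ge1$ the factor $\max\{1,\lambda^{-s}\}=1$, so $|\log\lambda|^{m+1}$ suffices. For $\lambda<1$, the factor $\lambda^{-s}|\log\lambda|^{m+1}\le C\lambda^{-s_0}$ when $s\le s_0/2$, and this is square integrable because $f\in D(\mathcal L_V^{-s_0})$. The main obstacle in both parts is this domination step near the endpoint where $\lambda^{\pm s}$ grows; it is handled by absorbing the polylogarithmic factor into half of the available power $\lambda^{\pm s_0}$.
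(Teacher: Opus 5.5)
Your proposal is correct and follows essentially the same route as the paper: spectral reduction, the Lagrange form of the remainder $|R_m(s,\lambda)|\le \frac{s^{m+1}}{(m+1)!}|\log\lambda|^{m+1}\max\{1,\lambda^{s}\}$, and absorption of $|\log\lambda|^{2(m+1)}$ into $\lambda^{s_0}$ for $\lambda>1$ (resp.\ $\lambda^{-s_0}$ for $\lambda<1$ in part (b)) once $s\le s_0/2$. Your dominating bound, with the factor $s^2$ kept in front, already gives $\int_0^\infty|R_m(s,\lambda)|^2\,d\mu^V_{f,f}(\lambda)\le Cs^{2(m+1)}$, which is exactly how the paper concludes; so the dominated convergence step can be dropped, as can the abandoned intermediate attempts in your write-up.
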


Let $s\in (0,1)$. For every $f\in L^2 (\mathbb R^d)$ and $m\in \mathbb N$, the function $F_f: (1/m,\infty)\rightarrow L^2(\mathbb R^d)$ defined by
\[
F_f(t)=\frac{T_t^Vf-f}{t^{s+1}},
\]
is weakly measurable in $(1/m,\infty)$ and, since $L^2(\mathbb R^d)$ is a separable Banach space, according to \cite[Ch.~V, \S4, Theorem,  p.~131]{Yo}, $F_f$ is strongly measurable in $(1/m,\infty)$. On the other hand, 
\[
\int_{1/m}^\infty\frac{\|T_t^Vf-f\|_{L^2(\mathbb R^d)}}{t^{s+1}}dt<\infty,
\]
for every $f\in L^2(\mathbb R^d)$ and $m\in \mathbb N$, because $\{T_t^V\}_{t>0}$ is a contractive semigroup in $L^2(\mathbb R^d)$. Hence, for every $f\in L^2 (\mathbb R^d)$ and $m\in \mathbb N$, $F_f$ is $L^2(\mathbb R^d)$-Bochner integrable in $(1/m,\infty)$. We define
\[
\mathcal L_{V,\textup{heat}}^s(f)=\frac{1}{\Gamma (-s)}\lim_{m\rightarrow \infty}\int_{1/m}^\infty\frac{T_t^Vf-f}{t^{s+1}}dt,\quad \mbox{ in }L^2(\mathbb R^d),
\]
for each $f\in D(\mathcal L_{V,\textup{heat}}^s)$, being
\[
D(\mathcal L_{V,\textup{heat}}^s)=\left\{f\in L^2 (\mathbb R^d): \lim_{m\rightarrow \infty}\int_{1/m}^\infty\frac{T_t^Vf-f}{t^{s+1}}dt\mbox{ exists in }L^2(\mathbb R^d)\right\}.
\]
In \cite[Proposition 2.4]{BDFQ} it was proved that if $f\in D(\mathcal L_V^s)$, then $f\in D(\mathcal L_{V,\textup{heat}}^s)$ and $\mathcal L_V^sf=\mathcal L_{V,\textup{heat}}^sf$. Furthermore, if $f\in D(\mathcal L_V^\beta)$, with $s<\beta \leq 1$, then
\[
\mathcal L_V^sf=\frac{1}{\Gamma (-s)}\int_0^\infty \frac{T_t^Vf-f}{t^{s+1}}dt,
\]
where the integral is understood in the $L^2(\mathbb R^d)$-Bochner sense.

Let $\theta\in (0,1]$ and $f$ be a complex function defined in $\mathbb{R}^d$. We say that $f$ is in ${\textup{Lip}}_{{\rm loc},x}^\theta(\mathbb{R}^d)$, with $x\in \mathbb{R}^d$, when
\[
\|f\|_{{\textup{Lip}}_{{\rm loc},x}^\theta(\mathbb{R}^d)}:=\sup_{y\in B(x,1)}\frac{|f(x)-f(y)|}{|x-y|^\theta}<\infty.
\]
The function $f$ is said to be in ${\textup{Lip}}^\theta_{\rm loc}(\mathbb{R}^d)$ when $f\in {\textup{Lip}}^\theta_{{\rm loc},x}(\mathbb{R}^d)$, for every $x\in \mathbb{R}^d$.  We say that $f$ is in ${\textup{Lip}}^\theta_{{\rm loc},{\rm uni}}(\mathbb{R}^d)$ when
\[
\sup_{x\in \mathbb{R}^d}\sup_{y\in B(x,1)}\frac{|f(x)-f(y)|}{|x-y|^\theta}<\infty.
\]

Consider $d\geq 3$, $q>d/2$ and $V\in \RH_q$. In Section~\ref{sec: 2} (see Remark \ref{rem}) for every $\theta \in (0,1]$, we extend the operator $\mathcal L_V^s$, $s\in (0,\min\{\frac{\theta }{2},\frac{\delta}{2}\})$, being $\delta =2-d/q$, on the space of functions in ${\textup{Lip}}_{\textup{loc}}^\theta (\mathbb R^d)\cap L^1_0(\mathbb R^d)$, where $L^1_\sigma(\mathbb R^d)$, $\sigma \in \mathbb R$, represents the space of measurable functions on $\mathbb R^d$ such that $\int_{\mathbb R^d}|f(y)|(1+|y|)^{-d-\sigma}dy<\infty$. 

Likewise, in Remark \ref{rem} we extend the definition of $\mathcal L_V^{-s}$, $s\in (0,\min\{\frac{\varepsilon }{2},\frac{1}{4}\})$ and $\varepsilon \in (0,d)$, on the space ${\textup{Lip}}_{\textup{loc}}^\theta (\mathbb R^d)\cap L^1_{-\varepsilon}(\mathbb R^d)$. 

By reading the proof of Theorem \ref{ThB}  we can see that 
\begin{align}\label{1.5}
\lim_{s\rightarrow 0^+}\frac{1}{s}(\mathcal L_V^sf(x)-f(x))&=\int_{B(x,1)}(f(x)-f(y))\int_0^\infty \frac{T_t^V(x,y)}{t}dtdy\nonumber\\
&\hspace{-2cm} {-}\int_{\mathbb R^d\setminus B(x,1)}f(y)\int_0^\infty \frac{T_t^V(x,y)}{t}dtdy-K(x)f(x),\quad x\in \mathbb R^d, 
\end{align}
provided that $f\in {\textup{Lip}}^\theta_{\rm loc} (\mathbb R^d)\cap L^1_0(\mathbb R^d)$, for some $\theta \in (0,1]$. We extend the definition of $\log \mathcal L_V$ by defining $(\mathbb{L}\textup{og}\,\mathcal{L}_V)f$, for $f\in {\textup{Lip}}_{\rm loc}^\theta (\mathbb R^d)\cap L^1_0(\mathbb R^d)$, with $\theta \in (0,1]$, by using the right-hand side of \eqref{1.5}.

Moreover, Proposition \ref{propoY} and Proposition~\ref{Prop2.6} suggest us to define, for every $k\in \mathbb{N}$, the operator
\begin{equation}\label{Logk}
(\mathbb{L}\textup{og}^k\mathcal L_V)(f)(x):=\lim_{s\rightarrow 0^+}\partial _s^k\mathcal L_V^s(f)(x),\quad x\in \mathbb R^d,
\end{equation}
when $f\in {\textup{Lip}}^\theta _{\rm loc}(\mathbb R^d)\cap L^1_0(\mathbb R^d)$, obtaining that, when $f\in {\textup{Lip}}_{\rm loc}^\theta (\mathbb R^d)\cap L^1_{-\varepsilon}(\mathbb R^d)$, for some $\varepsilon \in (0,d)$, 
\[
(\mathbb{L}\textup{og}^k\mathcal L_V)(f)(x)=(-1)^k\lim_{s\rightarrow 0^+}\partial _s^k\mathcal L_V^{-s}(f)(x),\quad x\in \mathbb R^d.
\]
Note that $\mathbb{L}\textup{og}^k\,\mathcal{L}_V=\mathbb{L}\textup{og}\,\mathcal{L}_V$, when $k=1$.

We also see (Proposition~\ref{Ccreglog}) that each function $f\in C_c^\infty (\mathbb R^d)$ belongs to $D(\log ^k\mathcal L_V)$ and
\[
(\mathbb{L}\textup{og}^k\mathcal L_V)(f)(x)=(\log ^k\mathcal L_V)(f)(x),\quad \text{ a.e. }x\in \mathbb R^d.
\]

We extend the results in Theorem \ref{ThC} and Theorem \ref{Th1.1} in the following way. 

\begin{thm}\label{Th1.2}
Let $d\geq 3$, $q>d/2$, $V\in \RH_q$, and $0<\theta \leq 1$. 
\begin{enumerate}[label=\textnormal{(\alph*)}]
    \item \label{itm: Th1.2-a} Suppose that $f\in C_c(\mathbb R^d)\cap {\textup{Lip}}\,^\theta _{\textup{loc},\textup{uni}}(\mathbb R^d)$. Then, for every $1<p<\infty$ and $n\in \mathbb N$ we have that
\begin{equation}\label{1.10}
\lim_{s\rightarrow 0^+}\left(\mathcal L_V^sf-f-\sum_{k=1}^{n-1}\frac{s^k}{k!}(\mathbb{L}\textup{og}^k\mathcal L_V)f\right)\frac{n!}{s^n}=(\mathbb{L}\textup{og}^n\mathcal L_V)f,\quad \mbox{ in }L^p(\mathbb R^d).
\end{equation}
\item \label{itm: Th1.2-b} Suppose that $f\in L^1(\mathbb R^d)\cap L^\infty (\mathbb R^d)\cap {\textup{Lip}}\,^\theta _{\textup{loc},\textup{uni}}(\mathbb R^d)$. Then, for each $n\in \mathbb N$ and every compact set $\mathbb K\subset \mathbb R^d$, 
\begin{equation}\label{1.11}
\lim_{s\rightarrow 0^+}\left(\mathcal L_V^sf-f-\sum_{k=1}^{n-1}\frac{s^k}{k!}(\mathbb{L}\textup{og}^k\mathcal L_V)f\right)\frac{n!}{s^n}=(\mathbb{L}\textup{og}^n\mathcal L_V)f,\quad \mbox{ in }L^\infty(\mathbb K).
\end{equation}
\end{enumerate}
\end{thm}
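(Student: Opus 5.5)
The plan is to reduce \eqref{1.10} and \eqref{1.11} to a Taylor formula with integral remainder in the variable $s$, applied pointwise in $x$, followed by uniform-in-$s$ control of the remainder.

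\textbf{Step 1: Taylor formula.} Fix $x$ and write $\Phi_x(s)=\mathcal L_V^s(f)(x)$ for $s\in(0,s_1)$, with $s_1<\min\{\theta/2,\delta/2\}$. I use the heat representation
\[
\mathcal L_V^sf(x)=\frac{1}{\Gamma(-s)}\int_0^\infty\frac{T_t^Vf(x)-f(x)}{t^{s+1}}\,dt,
\]
valid on ${\textup{Lip}}^\theta_{\rm loc}\cap L^1_0$ (Remark \ref{rem}). The results leading to \eqref{Logk} (Proposition \ref{propoY}, Proposition~\ref{Prop2.6}) give that $\Phi_x$ is $C^\infty$ on $(0,s_1)$, and that $\partial_s^k\Phi_x(s)\to(\mathbb{L}\textup{og}^k\mathcal L_V)f(x)$ as $s\to0^+$. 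Taylor's theorem with integral remainder, applied on $[\varepsilon,s]$ and letting $\varepsilon\to0^+$, then yields
\[
\Big(\mathcal L_V^sf-f-\sum_{k=1}^{n-1}\frac{s^k}{k!}(\mathbb{L}\textup{og}^k\mathcal L_V)f\Big)(x)\frac{n!}{s^n}=n\int_0^1(1-u)^{n-1}\partial_s^n\Phi_x(us)\,du .
\]
This uses $\Phi_x(0^+)=f(x)$, which is the case $k=0$. Consequently the difference between the left side and $(\mathbb{L}\textup{og}^n\mathcal L_V)f(x)$ is bounded by $\sup_{0<r<s}|\partial_s^n\Phi_x(r)-(\mathbb{L}\textup{og}^n\mathcal L_V)f(x)|$. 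It therefore suffices to show that
\[
\partial_s^n\mathcal L_V^sf\to\mathbb{L}\textup{og}^n\mathcal L_Vf\quad\text{as } s\to0^+,
\]
in $L^p$ for part (a) and in $L^\infty(\mathbb K)$ for part (b). Minkowski's integral inequality then lets me pass the norm inside the $u$-integral.

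\textbf{Step 2: splitting the derivative.} I differentiate $n$ times in $s$ under the integral. Each derivative produces either derivatives of $1/\Gamma(-s)$ or powers $(-\log t)^j$. Note that $1/\Gamma(-s)=-s+O(s^2)$, which is real analytic with a simple zero at $0$. I split the $t$-integral into $(0,1)$ and $(1,\infty)$.

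On $(0,1)$ I use
\[
|T_t^Vf(x)-f(x)|\le \int T_t^V(x,y)|f(y)-f(x)|\,dy+|f(x)|\,\Big|1-\int T_t^V(x,y)\,dy\Big| .
\]
The first term is at most $Ct^{\theta/2}$ uniformly in $x$, by ${\textup{Lip}}^\theta_{\rm loc,uni}$ together with boundedness of $f$ and \eqref{1.6}. The second term is at most $C|f(x)|(t/\rho(x)^2)^{\delta/2}$ by the standard perturbation estimate for $\RH_q$ potentials. Together these give integrable majorants $t^{\min(\theta,\delta)/2-1-s}|\log t|^j$, uniformly in $s<s_1$, so dominated convergence in $s$ applies pointwise.

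On $(1,\infty)$ the term $f(x)\int_1^\infty t^{-s-1}|\log t|^j\,dt\sim f(x)\,j!/s^{j+1}$ blows up. It is exactly compensated by the zero of $1/\Gamma(-s)$ via Leibniz's rule, and this compensation produces the $K(x)f(x)$-type terms. The remaining term $T_t^Vf(x)$ decays, since by \eqref{acotTVimpr} it is at most $C\|f\|_{L^1}t^{-d/2}$, or $C t^{-N}$ locally. This decay gives absolutely convergent integrals.

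\textbf{Step 3: $L^p$ and $L^\infty(\mathbb K)$ domination (main obstacle).} I then need majorants $G(x)$ independent of $s$ with $G\in L^p$, or $G$ bounded on $\mathbb K$. For part (b), on a compact set, $\rho$ is bounded above and below, since $\rho$ is continuous and positive (Shen). The pointwise bounds above are then uniform on $\mathbb K$, and the $1/\Gamma$ cancellation is a uniform Taylor argument, which gives uniform convergence.

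For part (a) the hard region is $|x|$ large, with $\supp f\subset B(0,R)$. There $f(x)=0$ and $\Phi_x(s)=\frac{1}{\Gamma(-s)}\int_0^\infty t^{-s-1}T_t^Vf(x)\,dt$. Using $T_t^V(x,y)\le Ct^{-d/2}e^{-|x|^2/(20t)}$, the derivatives are bounded by $C\int_0^\infty t^{-s-1-d/2}(1+|\log t|)^n e^{-c|x|^2/t}\,dt\lesssim |x|^{-d-2s}(1+\log|x|)^n$. This majorant lies in $L^p$ for every $p>1$, uniformly in small $s$; it fails at $p=1$, which explains the restriction $p>1$. Near the support, Step 2 gives uniform boundedness, so dominated convergence in $L^p$ concludes part (a). The delicate bookkeeping is the Leibniz cancellation of the $s^{-j-1}$ singularities against $1/\Gamma(-s)$, which must be made uniform in $x$.
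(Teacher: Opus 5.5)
Your proof is correct, but it is organized differently from the paper's.

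\textbf{What the paper does.} It never passes through convergence of $\partial_s^n\mathcal L_V^sf$. It writes the normalized remainder $\Omega_f(x,s)$ directly as integrals of the heat kernel against scalar Taylor remainders $G(s,t)$ and $H(s,x)$. These are the remainders of $g_a(s)=sa^{-s-1}/\Gamma(1-s)$ and $h_a(s)=a^{-2s}/\Gamma(1-s)-1$, and Lemma~\ref{techn} bounds them by the mean value theorem applied to the $(n+1)$-st derivative. The paper keeps the splitting of Theorem~\ref{ThB}, at $t=\rho(x)^2$ and $y\in B(x,1)$. This produces weights such as $\rho(x)^{-\delta}$, $\rho(x)^{-(d+2+\varepsilon)}$ and $\rho(x)^{2\varepsilon}$, which are then controlled on compact sets. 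The payoff is a quantitative bound $|\Omega_f(x,s)|\le Cs\,W(x)$, i.e.\ an $O(s^{n+1})$ remainder rather than just $o(s^n)$.

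\textbf{What your route does.} You use Taylor's formula with integral remainder in $s$ to reduce everything to $\partial_s^n\mathcal L_V^sf\to\mathbb{L}\textup{og}^n\mathcal L_Vf$ in norm. You then conclude by dominated convergence with the simpler split at $t=1$. This is more modular:
\begin{enumerate}
\item it works directly with the definition \eqref{Logk};
\item it needs neither Lemma~\ref{techn} nor the explicit formula of Proposition~\ref{propoY};
\item it uses only a lower bound for $\rho$ on compact sets.
\end{enumerate}
As written it gives only qualitative convergence. However, the same majorants applied to $\partial_s^{n+1}\mathcal L_V^sf$ (one more factor $|\log t|$, still integrable) recover the paper's $O(s)$ rate via the mean value theorem.

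\textbf{Minor corrections.} The ``Leibniz cancellation'' you flag as the delicate point is exact before any differentiation, because
\[
\frac{1}{\Gamma(-s)}\int_1^\infty\frac{-f(x)}{t^{s+1}}\,dt=\frac{f(x)}{\Gamma(1-s)}.
\]
Hence $\mathcal L_V^sf(x)=-\frac{s}{\Gamma(1-s)}A(x,s)+\frac{f(x)}{\Gamma(1-s)}$, where
\[
A(x,s)=\int_0^1\frac{T_t^Vf(x)-f(x)}{t^{s+1}}\,dt+\int_1^\infty\frac{T_t^Vf(x)}{t^{s+1}}\,dt
\]
is smooth on $[0,s_1)$. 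No singular terms appear, and uniformity in $x$ is automatic. What you need about $\rho$ is the two-sided bound on compacts coming from the local comparability $\rho(y)\sim\rho(x)$ for $|x-y|<\rho(x)$, as the paper uses; continuity is not required. Finally, Proposition~\ref{Prop2.6}, which concerns negative powers, plays no role in this theorem.
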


We now state the results for the negative power of $\mathcal L_V$.

\begin{thm}\label{Th1.3} Let $d\geq 3$, $q>d/2$, $V\in \RH_q$, and $0<\theta \leq 1$.
\begin{enumerate}[label=\textnormal{(\alph*)}]
    \item \label{itm: Th1.3-a} Suppose that $f\in C_c(\mathbb R^d)\cap {\textup{Lip}}\,^\theta _{\textup{loc},\textup{uni}}(\mathbb R^d)$. Then, for every $1<p\leq \infty$ and $n\in \mathbb N$, we have that
\[
\lim_{s\rightarrow 0^+}\left(\mathcal L_V^{-s}f-f-\sum_{k=1}^{n-1}\frac{(-1)^ks^k}{k!}(\mathbb{L}\textup{og}^k\mathcal L_V)f\right)\frac{(-1)^nn!}{s^n}=(\mathbb{L}\textup{og}^n\mathcal L_V)f,\; \mbox{ in }L^p(\mathbb R^d).
\]
\item \label{itm: Th1.3-b} Suppose that $f\in L^1(\mathbb R^d)\cap L^\infty (\mathbb R^d)\cap {\textup{Lip}}\,^\theta _{\textup{loc},\textup{uni}}(\mathbb R^d)$. Then, for each $n\in \mathbb N$ and every compact set $\mathbb K\subset \mathbb R^d$, 
\[
\lim_{s\rightarrow 0^+}\left(\mathcal L_V^{-s}f-f-\sum_{k=1}^{n-1}\frac{(-1)^ks^k}{k!}(\mathbb{L}\textup{og}^k\mathcal L_V)f\right)\frac{(-1)^nn!}{s^n}=(\mathbb{L}\textup{og}^n\mathcal L_V)f,\; \mbox{ in }L^\infty(\mathbb K).
\]
\end{enumerate}
\end{thm}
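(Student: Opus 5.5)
Theorem \ref{Th1.3} is the negative-power counterpart of Theorem \ref{Th1.2}. I would prove it by Taylor's formula with integral remainder in the variable $s$, applied pointwise to $s\mapsto \mathcal L_V^{-s}f(x)$. The starting point is the subordination representation
\[
\mathcal L_V^{-s}f(x)=\frac{1}{\Gamma(s)}\int_0^\infty t^{s-1}T_t^V f(x)\,dt=\int_{\mathbb R^d}f(y)\,\frac{1}{\Gamma(s)}\int_0^\infty t^{s-1}T_t^V(x,y)\,dt\,dy .
\]
This is valid for $f\in {\textup{Lip}}^\theta_{\rm loc}\cap L^1_{-\varepsilon}$ and $s$ small, by Remark \ref{rem}. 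Both hypotheses of (a) and (b) place $f$ in $L^1_{-\varepsilon}$ for every $\varepsilon\in(0,d)$.

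\textbf{Step 1: derivatives in $s$.} Write $\mathcal L_V^{-s}f(x)=G(s,x)$. By the results quoted before the statement (Propositions \ref{propoY} and \ref{Prop2.6} and the definition \eqref{Logk}), $\partial_s^kG(s,x)$ exists for $s\in(0,s_1)$ and tends to $(-1)^k(\mathbb{L}\textup{og}^k\mathcal L_V)f(x)$ as $s\to0^+$. Taylor's formula then gives
\[
\mathcal L_V^{-s}f-f-\sum_{k=1}^{n-1}\frac{(-1)^ks^k}{k!}(\mathbb{L}\textup{og}^k\mathcal L_V)f=\frac{s^n}{(n-1)!}\int_0^1(1-u)^{n-1}\partial_s^nG(us,\cdot)\,du .
\]
Multiplying by $(-1)^n n!/s^n$ reduces the claim to showing $(-1)^n\partial_s^nG(\sigma,\cdot)\to(\mathbb{L}\textup{og}^n\mathcal L_V)f$ in the relevant norm, uniformly enough in $\sigma\in(0,s)$ to pass the limit through the $du$ integral. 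It is enough to show $\sup_{0<\sigma<s}\|\partial_s^nG(\sigma,\cdot)-(-1)^n(\mathbb{L}\textup{og}^n\mathcal L_V)f\|\to0$.

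\textbf{Step 2: splitting $\partial_s^nG$.} Differentiating $1/\Gamma(s)\cdot t^{s-1}$ produces factors $(\log t)^j$ and derivatives of $1/\Gamma$. Since $1/\Gamma(s)=s+\gamma s^2+\dots$ is smooth at $0$, no singularity arises from the prefactor. I would split the $t$-integral at $t=\rho(x)^2$ and split $y$ into $B(x,1)$ and its complement, exactly as in the decomposition behind Theorem \ref{ThB}.
\begin{enumerate}
\item For small $t$, the factor $\int_0^{\rho(x)^2}t^{\sigma-1}|\log t|^j\,dt$ diverges like $\sigma^{-j-1}$. It is compensated by the zero of $1/\Gamma(\sigma)$, together with subtracting $f(x)\int T_t(x-y)\,dy=f(x)$. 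The term $f(x)-f(y)$ is controlled by the Lipschitz bound $|x-y|^\theta$ through the Gaussian estimate \eqref{acotTVimpr}, which gives integrability $t^{\theta/2-1}$. The difference $\int(T_t^V-T_t)(x,y)\,dy$ is controlled by the usual perturbation bound $(\sqrt t/\rho(x))^{\delta}$.
\item For large $t$, the decay factor $(1+\sqrt t/\rho)^{-N}$ in \eqref{acotTVimpr} makes everything absolutely convergent, uniformly in $\sigma\in[0,s_1]$, including the $(\log t)^j$ weights.
\end{enumerate}
This yields pointwise convergence together with a majorant $H(x)$ independent of $\sigma$.

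\textbf{Step 3: the majorant, the main obstacle.} For part (b), a compact $\mathbb K$ gives $\rho$ bounded above and below and $|\log\rho|$ bounded. With $f\in L^1\cap L^\infty\cap{\textup{Lip}}^\theta_{\rm loc,uni}$, all terms are then uniformly continuous in $\sigma$ on $\mathbb K$, giving convergence in $L^\infty(\mathbb K)$. For part (a), with $f\in C_c$ supported in a ball $B_0$, one needs $H\in L^p$ for $1<p\le\infty$.
\begin{enumerate}
\item Near $B_0$ the pointwise bounds suffice, since they are locally uniform.
\item Far from $B_0$, $f(x)=0$, and $\partial_s^nG(\sigma,x)$ reduces to $\int_{B_0}f(y)\int_0^\infty\partial_\sigma^n\big(t^{\sigma-1}/\Gamma(\sigma)\big)T_t^V(x,y)\,dt\,dy$. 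Using \eqref{acotTVimpr}, this is bounded by $C|x|^{-d+2\sigma}\log^n|x|$ times an extra decay from $\rho$.
\end{enumerate}
The delicate point is that $|x|^{-d}\log^n|x|$ alone is in $L^p$ only for $p>1$, but this is acceptable here. Uniformity in $\sigma<s$ of the exponent $-d+2\sigma$ is handled by restricting to $\sigma<s_1$ with $2s_1p<d(p-1)$. For $p=\infty$ the bound is trivial. Dominated convergence then finishes (a).

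I expect Step 3 to be the hardest part, specifically controlling the $\log^n$ weights uniformly in $\sigma$ in the region $t>\rho(x)^2$ for $x$ far away, where $\rho(x)$ may grow. There I would try the bound $(1+\sqrt t/\rho(x))^{-N}\le(\rho(x)^2/t)^{N/2}$ combined with Shen's estimate $\rho(x)\lesssim(1+|x|)^{k_0/(k_0+1)}$.
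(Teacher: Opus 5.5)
Your reduction is sound, but it takes a different route from the paper's. You extend $s\mapsto\mathcal L_V^{-s}f(x)$ to $s=0$ via Remark \ref{rem} and Proposition \ref{Prop2.6} and apply Taylor's formula with integral remainder. This correctly reduces the theorem to convergence in norm of $\partial_s^n\mathcal L_V^{-\sigma}f$ to $(-1)^n(\mathbb{L}\textup{og}^n\mathcal L_V)f$ as $\sigma\to0^+$. You then obtain that convergence qualitatively, from the pointwise limit plus a $\sigma$-independent majorant.

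The paper never differentiates $\mathcal L_V^{-s}f$. In \eqref{sumR} the $s$-dependence sits in the scalar weights $-g_t(-s)$ and $h_{\rho(x)}(-s)$. Lemma \ref{techn} bounds their order-$n$ Taylor remainders by $Cs^{n+1}$ times explicit powers of $t$ and $\rho(x)$. Integrating against \eqref{1.6}, \eqref{TtVTt}, \eqref{acotTVimpr} then gives $|\Lambda_f(x,s)|\le Cs\,\Phi(x)$, with $\Phi$ bounded on compacts and decaying like $|x|^{-d+\eta}$ off $\supp f$. This yields an $O(s)$ rate and every norm at once.

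For $1<p<\infty$ your argument goes through. Off $B_0$ the majorant $C\|f\|_{L^1}|x|^{2s_1-d}(1+\log^n|x|)$ follows from \eqref{1.6} alone (substitute $t=|x-y|^2u$). So your closing worry about large $\rho(x)$ is moot: every $\rho(x)$-dependent piece carries the factor $f(x)=0$ there. One correction to Step 2: the $\sigma^{-j-1}$ blow-up of $\int_0^{\rho(x)^2}t^{\sigma-1}(\log t)^j\,dt$ is not cancelled term by term by the simple zero of $1/\Gamma(\sigma)$ in a Leibniz expansion. You must differentiate the closed form $f(x)\rho(x)^{2\sigma}/\Gamma(1+\sigma)$, as in \eqref{sumR}.

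The genuine gap is $p=\infty$ in (a), and the same missing step underlies your treatment of (b). Dominated convergence yields nothing in $L^\infty$. A bounded majorant plus pointwise convergence does not give $\sup_x|\partial_s^n\mathcal L_V^{-\sigma}f(x)-(-1)^n(\mathbb{L}\textup{og}^n\mathcal L_V)f(x)|\to0$. Likewise, ``all terms are uniformly continuous in $\sigma$ on $\mathbb K$'' asserts exactly what must be proved.

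What you need is an equicontinuity estimate at $\sigma=0$. For instance, insert $|t^{\sigma}-1|\le\sigma|\log t|\max\{1,t^{\sigma}\}$ into the integrals $H_j^{(r)}$ of \eqref{sumH}; the extra logarithm is harmless, since your bounds hold for every power of $\log t$. Handling the remaining factors $\partial^{n-\ell}(1/\Gamma(1+\sigma))$, $\sigma H_j^{(\ell)}$ and $\rho(x)^{2\sigma}$ the same way gives $|\partial_s^n\mathcal L_V^{-\sigma}f(x)-(-1)^n(\mathbb{L}\textup{og}^n\mathcal L_V)f(x)|\le C\sigma\,\widetilde H(x)$, with $\widetilde H$ bounded on compact sets. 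This proves (b). Then (a) for $p=\infty$ follows by using this estimate on $B(0,M)$ and, outside, the decay $\sup_{|x|>M}H(x)\to0$ of your far-field majorant, which dominates both $\partial_s^n\mathcal L_V^{-\sigma}f$ and its limit. Such an estimate is exactly the $(n+1)$-st derivative control that Lemma \ref{techn} supplies, so once you have it the integral-remainder detour buys nothing beyond the paper's direct bound.
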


Let $f\in C_c(\mathbb R^d)$ and $s\in (0,1)$. We define $u_{f,s}=\mathcal L_V^{-s}f$. Since $C_c(\mathbb R^d)\subset D(\mathcal L_V^{-s})$ (see Corollary \ref{cor1}), by using \cite[Theorem 3(iv), p. 343]{Yo}, 
$u_{f,s}\in D(\mathcal L_V^{-s})$ and $\mathcal L_V^su_{f,s}=f$.

Suppose that $R>0$ and ${\rm supp}\,f\subset B(0,R)$. Then, by using Corollary \ref{cor1} and \eqref{1.6} we get 
\[
|u_{f,s}(x)|\leq C\int_{B(0,R)}\frac{|f(y)|}{|x-y|^{d-2s}}dy\leq \frac{C}{|x|^{d-2s}}\int_{B(0,R)}|f(y)|dy,\quad |x|\geq 2R.
\]
Hence, $\lim_{x\rightarrow \infty}u_{f,s}(x)=0$.

From Theorem \ref{Th1.3} we deduce the following result.
\begin{cor}
    Let $d\geq 3$, $q>d/2$, $V\in \RH_q$, and $s\in (0,1]$. Suppose that $f\in C_c(\mathbb R^d)\cap {\textup{Lip}}\,^\theta _{\textup{loc}, \textup{uni}}(\mathbb R^d)$. Then, for every $s\in (0,1)$, $u_{f,s}=\mathcal L
    _V^{-s}f$, is a solution of
    \[
    \left\{\begin{array}{r}
        \mathcal L_V^su=f\\
        \displaystyle \lim_{x\rightarrow \infty }u(x)=0 
    \end{array}
    \right.
    \]
    and, for each $n\in \mathbb N$,
    \[
    u_{f,s}=f+\sum_{j=1}^n\frac{(-1)^js^j}{j!}\mathbb{L}\textup{og}^j\mathcal{L}_Vf+o(s^n),\mbox{ as }s\rightarrow 0^+,
    \]
    in $L^p(\mathbb R^d)$ when $1<p<\infty$, and in $L^\infty(\mathbb K)$, for every compact subset $\mathbb K\subset \mathbb R^d$.
\end{cor}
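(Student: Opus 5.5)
The corollary has two parts: that $u_{f,s}$ solves the problem, and that it has the stated expansion. The first part is essentially done in the paragraph preceding the statement. Since $f\in C_c(\mathbb R^d)$, Corollary \ref{cor1} gives $f\in D(\mathcal L_V^{-s})$. By \cite[Theorem 3(iv), p. 343]{Yo}, $u_{f,s}=\mathcal L_V^{-s}f$ then lies in $D(\mathcal L_V^{s})$. Spectrally, $\int \lambda^{2s}\lambda^{-2s}\,d\mu^V_{f,f}=\|f\|^2<\infty$, so $\mathcal L_V^s u_{f,s}=\int\lambda^s\lambda^{-s}\,dE_V f=f$.

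For the decay at infinity, I would use the integral representation of $\mathcal L_V^{-s}$ from Corollary \ref{cor1}, namely the subordination formula $\mathcal L_V^{-s}f=\Gamma(s)^{-1}\int_0^\infty T_t^Vf\,t^{s-1}dt$, together with \eqref{1.6}. This bounds $|u_{f,s}(x)|$ by a constant times the Riesz potential $\int |f(y)|\,|x-y|^{2s-d}dy$. For $|x|\ge 2R$ with $\supp f\subset B(0,R)$ we have $|x-y|\ge |x|/2$, which gives $|u_{f,s}(x)|\le C|x|^{2s-d}\|f\|_{L^1}\to0$. Here $d\ge3$ and $2s<2\le d$ ensure both the convergence of $\int_0^\infty t^{s-1-d/2}e^{-|z|^2/4t}dt$ and the decay. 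The case $s=1$ is excluded in the statement ("for every $s\in(0,1)$"), but the same argument would work there.

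For the expansion, I would apply Theorem \ref{Th1.3} with $n$ replaced successively by $1,\dots,n$. Fix $n$. Theorem \ref{Th1.3}\ref{itm: Th1.3-a} with index $n$ states that
\[
\frac{(-1)^n n!}{s^n}\Big(u_{f,s}-f-\sum_{k=1}^{n-1}\frac{(-1)^ks^k}{k!}\mathbb{L}\textup{og}^k\mathcal L_Vf\Big)\longrightarrow \mathbb{L}\textup{og}^n\mathcal L_Vf
\]
in $L^p(\mathbb R^d)$. Multiplying by $(-1)^ns^n/n!$ gives
\[
u_{f,s}-f-\sum_{k=1}^{n}\frac{(-1)^ks^k}{k!}\mathbb{L}\textup{og}^k\mathcal L_Vf=o(s^n)
\]
in $L^p(\mathbb R^d)$, for every $1<p<\infty$. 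This step needs $\mathbb{L}\textup{og}^n\mathcal L_Vf\in L^p$, which holds because it is the $L^p$-limit of $L^p$ functions. Each term of the expression is in $L^p$ since $f\in C_c$ and Theorem \ref{Th1.3} implicitly gives membership. For the local uniform statement, I would note that $f\in C_c(\mathbb R^d)\cap{\rm Lip}^\theta_{\rm loc,uni}$ also lies in $L^1\cap L^\infty\cap{\rm Lip}^\theta_{\rm loc,uni}$. Part \ref{itm: Th1.3-b} then applies with the same algebraic manipulation on $L^\infty(\mathbb K)$.

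The only delicate point is consistency of definitions. The function $\mathcal L_V^{-s}f$ appearing in Theorem \ref{Th1.3} is the pointwise extension from Remark \ref{rem}, defined for $s<\min\{\varepsilon/2,1/4\}$. I have to check that it coincides a.e. with the spectral $u_{f,s}$ for $f\in C_c$. I would verify this by noting that both equal the heat-semigroup subordination integral, by Corollary \ref{cor1} and \eqref{1.4}. Since the expansion concerns $s\to0^+$, the restriction to small $s$ is harmless.
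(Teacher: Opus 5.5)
Your proposal is correct and follows the same argument the paper sketches. Corollary~\ref{cor1} together with \cite[Theorem 3(iv), p.~343]{Yo} gives $\mathcal L_V^s u_{f,s}=f$; the subordination formula with \eqref{1.6} gives the $|x|^{2s-d}$ decay; and the expansion is parts \ref{itm: Th1.3-a} and \ref{itm: Th1.3-b} of Theorem~\ref{Th1.3}, rescaled by $(-1)^n s^n/n!$. Your version is slightly more careful than the paper's in two places: you place $u_{f,s}$ in $D(\mathcal L_V^{s})$, whereas the paper writes $D(\mathcal L_V^{-s})$, and you check that the spectral $\mathcal L_V^{-s}f$ agrees a.e.\ with the pointwise operator of Remark~\ref{rem} that Theorem~\ref{Th1.3} actually concerns.
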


In the next section, we present the main results concerning fractional powers $\mathcal{L}_V^s$. The proofs of Theorems \ref{Th1.1}, \ref{Th1.2}, and \ref{Th1.3} are provided in the subsequent sections. Throughout this paper, $C$ and $c$ denote positive constants whose values may vary from line to line. 

\section{About fractional powers of \texorpdfstring{$\mathcal{L}_V$}{LV}}~\label{sec: 2}
As previously mentioned, in \cite[Proposition 2.4]{BDFQ} the authors proved that, given $s\in (0,1)$ and $f\in L^2(\mathbb R^d)$, if $f\in D(\mathcal L_V^\beta)$ for some $s<\beta \leq 1$, then
\begin{equation}\label{converL2}
\mathcal L_V^sf=\frac{1}{\Gamma (-s)}\int_0^\infty \frac{T_t^Vf-f}{t^{s+1}}dt,
\end{equation}
where the integral is understood in the $L^2(\mathbb R^d)$-Bochner sense, and from (\ref{converL2}) it follows that, for almost all $x\in \mathbb{R}^d$,
\[
(\mathcal{L}_V^sf)(x)=\frac{1}{\Gamma(-s)}\int_0^\infty\frac{T_t^V(f)(x)-f(x)}{t^{s+1}}dt.
\]
We recall that $C_c^\infty(\mathbb{R}^d)\subset D(\mathcal{L}_V)\subset D(\mathcal{L}_V^s)$, $s\in (0,1)$.

We first study the behavior of the positive power $\mathcal{L}_V^s$ on Lipschitz functions.

\begin{prop}\label{PropLips} 
Let $d\geq 3$, $q>d/2$, and $V\in {\rm RH}_q$. For every $f\in {\textup{Lip}}_\textup{loc}^\theta (\mathbb R^d)\cap L^1_0(\mathbb R^d)$, with $\theta \in (0,1)$, 
\begin{equation}\label{limLips}
\lim_{s\rightarrow 0^+}\frac{1}{\Gamma (-s)}\int_0^\infty \frac{T_t^V(f)(x)-f(x)}{t^{s+1}}dt=f(x),\quad x\in \mathbb R^d,
\end{equation}
being the integral absolutely convergent for each $0<s<\min\left\{\tfrac{\theta}{2},\tfrac{\delta}{2}\right\}$, where \linebreak ${\delta =2-d/q}$. 
\end{prop}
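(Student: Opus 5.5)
Fix $x\in\mathbb R^d$ and write $T_t^V(f)(x)-f(x)=\int_{\mathbb R^d}T_t^V(x,y)(f(y)-f(x))\,dy-f(x)\bigl(1-\int_{\mathbb R^d}T_t^V(x,y)\,dy\bigr)$. I would split the $t$-integral at $t=1$ (or at $\rho(x)^2$) and, for the first piece, further split $y\in B(x,1)$ and $y\notin B(x,1)$. The plan has two goals. First, every piece is absolutely integrable against $t^{-s-1}dt$ when $0<s<\min\{\theta/2,\delta/2\}$. Second, since $1/\Gamma(-s)=-s+O(s^2)$ as $s\to0^+$, the whole expression tends to $f(x)$. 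The only contribution of size $1/s$ comes from the tail $\int_1^\infty t^{-s-1}dt=1/s$ multiplying $-f(x)$, because $T_t^V(f)(x)\to0$ as $t\to\infty$. All other pieces stay bounded uniformly in small $s$ and are killed by the factor $1/\Gamma(-s)$.

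\textbf{Small times, $0<t<1$.} For $y\in B(x,1)$, the Lipschitz condition and \eqref{1.6} give
\[
\int_{B(x,1)}T_t(x-y)|x-y|^\theta\,dy\le Ct^{\theta/2},
\]
so this part is integrable against $t^{-s-1}$ for $s<\theta/2$. For $y\notin B(x,1)$, Gaussian decay gives a bound $Ce^{-c/t}\bigl(|f(x)|+\int|f(y)|(1+|y|)^{-d}dy\bigr)$, using $f\in L^1_0$. The defect term $1-\int T_t^V(x,y)\,dy$ is the main obstacle. I would use the perturbation (Duhamel) formula
\[
T_t(x-y)-T_t^V(x,y)=\int_0^t\int T_{t-r}(x-z)V(z)T_r^V(z,y)\,dz\,dr .
\]
Integrating in $y$ bounds the defect by $\int_0^t\int T_{t-r}(x-z)V(z)\,dz\,dr$. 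The standard estimate for $V\in\RH_q$ (via Shen's lemma $r^{2-d}\int_{B(x,r)}V\le C(r/\rho(x))^{\delta}$ for $r\le\rho(x)$) then gives $\le C(\sqrt t/\rho(x))^{\delta}$ for $t\le\rho(x)^2$. This yields integrability near $0$ for $s<\delta/2$. This is the step I expect to need the most care, although it is essentially the computation behind the constant $K(x)$ in Theorem~\ref{ThB}, so I would borrow it from there.

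\textbf{Large times, $t>1$.} Here each of $|f(x)|$ and $|T_t^V(f)(x)|$ is separately integrable against $t^{-s-1}$ for any $s>0$. For the semigroup term, \eqref{acotTVimpr} with $N$ large gives
\[
|T_t^V(f)(x)|\le C\int t^{-d/2}e^{-|x-y|^2/(5t)}(\sqrt t/\rho(x))^{-N}|f(y)|\,dy .
\]
Bounding $t^{-d/2}e^{-|x-y|^2/5t}\le C_x(1+|y|)^{-d}$ for $t\ge1$ and using $f\in L^1_0$ gives $|T_t^V(f)(x)|\le C_x t^{-N/2}$.

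\textbf{Limit.} Consequently, for $0<s<s_1:=\min\{\theta/2,\delta/2\}/2$,
\[
\frac{1}{\Gamma(-s)}\int_0^\infty\frac{T_t^V(f)(x)-f(x)}{t^{s+1}}dt=\frac{1}{\Gamma(-s)}\Bigl(A(s)+\int_1^\infty\frac{T_t^V(f)(x)}{t^{s+1}}dt-\frac{f(x)}{s}\Bigr),
\]
where $A(s)$ is the integral over $(0,1)$. The bounds above make $A(s)$ and the middle integral bounded uniformly for $s\in(0,s_1)$. Since $1/\Gamma(-s)\to0$ and $-1/(s\Gamma(-s))=1/\Gamma(1-s)\to1$, the limit is $f(x)$.
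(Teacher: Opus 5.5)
Your proof is correct, but it decomposes the integral differently from the paper.

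The paper cuts time at $\rho(x)^2$ rather than at $1$ and writes the integral as $I_1+I_2+I_3+I_4$:
\begin{itemize}
\item The term $I_1=\int_0^\infty\int_{\mathbb R^d}\bigl(f(y)-f(x)\mathcal X_{B(x,1)}(y)\bigr)T_t^V(x,y)t^{-s-1}\,dy\,dt$ is handled for all $t$ at once, using only the Gaussian bound \eqref{1.6}; the $t$-integral produces the kernel $|x-y|^{-d-2s}$.
\item The defect $I_2$ on $(0,\rho(x)^2)$ is controlled by quoting the pointwise estimate \eqref{TtVTt}.
\item The remaining pieces $I_3$ use \eqref{acotTVimpr} with $N=1$.
\item The singular term is $I_4=-f(x)\rho(x)^{-2s}/s$, rather than your $-f(x)/s$.
\end{itemize}

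You take two different steps instead. First, you get large-time control from the $t^{-N/2}$ decay of $T_t^V(f)(x)$ itself. Second, you derive $0\le 1-T_t^V(1)(x)\le C(\sqrt t/\rho(x))^\delta$ for $t\le\rho(x)^2$ from the Duhamel formula and Shen's lemma. This needs only the $y$-integrated estimate, which is exactly what \eqref{TtVTt} gives after integrating in $y$. So your route is more self-contained. If $\rho(x)<1$, state explicitly that on $[\rho(x)^2,1]$ you use the trivial bound $0\le 1-T_t^V(1)(x)\le 1$.

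What the paper's arrangement buys is reusability. The pieces $I_1,I_2,I_3$ and the explicit factor $\rho(x)^{-2s}$ are exactly what is differentiated in $s$ in Proposition~\ref{propoY}. That differentiation produces the $(2\log\rho(x))^\ell$ terms and matches the correction $K(x)$ of Theorem~\ref{ThB}. Your cut at $t=1$ would lead to equivalent but differently organized formulas for $\mathbb{L}\textup{og}^k\mathcal L_V$.
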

\begin{proof}
Assume that $f\in {\textup{Lip}}_\textup{loc}^\theta (\mathbb R^d)\cap L^1_0(\mathbb R^d)$. Observe that, since $f\in L^1_0(\mathbb R^d)$, using \eqref{1.6} we obtain that 
\begin{align*}
|T_t^Vf(x)|&\leq \frac{C}{t^{d/2}}\int_{\mathbb R^d}e^{-\frac{|x-y|^2}{4t}}|f(y)|dy\\
&\leq C\left(\frac{1}{t^{d/2}}\int_{|y|\leq 2|x|}|f(y)|dy+\int_{|y|>2|x|}\frac{|f(y)|}{(1+|x-y|)^d}dy\right)\\
 &\leq C\left(\frac{(1+|x|)^d}{t^{d/2}}\int_{\mathbb R^d}\frac{|f(y)|}{(1+|y|)^d}dy+\int_{|y|>2|x|}\frac{|f(y)|}{(1+|y|)^d}dy\right)\\
 & \leq C(1+|x|)^d\left(\frac{1}{t^{d/2}}+1\right)\int_{\mathbb R^d}\frac{|f(y)|}{(1+|y|)^d}dy<\infty,\quad x\in \mathbb R^d\mbox{ and }t>0.
\end{align*}
On the other hand, since $T_t(1)(x)=1$, $x\in \mathbb R^d$ and $t>0$, we can write
\begin{align}\label{sumI}
   \int_0^\infty \frac{T_t^V(f)(x)-f(x)}{t^{s+1}}dt&\nonumber\\
   &\hspace{-3cm}=\int_0^\infty \frac{T_t^V(f)(x)-T_t^V(1)(x)f(x)}{t^{s+1}}dt+f(x)\int_0^{\rho (x)^2}\frac{T_t^V(1)(x)-T_t(1)(x)}{t^{s+1}}dt\nonumber\\
    &\hspace{-3cm}\quad +f(x)\int_{\rho (x)^2}^\infty \frac{T_t^V(1)(x)}{t^{s+1}}dt-f(x)\int_{\rho (x)^2}^\infty \frac{dt}{t^{s+1}}\nonumber\\
    &\hspace{-3cm}= \int_0^\infty\int_{\mathbb R^d}\big(f(y)-f(x)\mathcal X_{B(x,1)}(y)\big)\frac{T_t^V(x,y)}{t^{s+1}}dydt\nonumber\\
    &\hspace{-3cm}\quad -f(x) \int_0^\infty\int_{\mathbb R^d\setminus B(x,1)}\frac{T_t^V(x,y)}{t^{s+1}}dydt\nonumber\\
    &\hspace{-3cm}\quad +f(x)\int_0^{\rho (x)^2}\int_{\mathbb R^d}\frac{T_t^V(x,y)-T_t(x-y)}{t^{s+1}}dydt\nonumber\\
    &\hspace{-3cm}\quad +f(x) \int_{\rho (x)^2}^\infty\int_{\mathbb R^d}\frac{T_t^V(x,y)}{t^{s+1}}dydt-f(x)\frac{\rho (x)^{-2s}}{s}\nonumber\\
     &\hspace{-3cm}= \int_0^\infty\int_{\mathbb R^d}\big(f(y)-f(x)\mathcal X_{B(x,1)}(y)\big)\frac{T_t^V(x,y)}{t^{s+1}}dydt\nonumber\\
    &\hspace{-3cm}\quad +f(x)\int_0^{\rho (x)^2}\int_{\mathbb R^d}\frac{T_t^V(x,y)-T_t(x-y)}{t^{s+1}}dydt\nonumber\\
    &\hspace{-3cm}\quad +f(x)\left(\int_{\rho (x)^2}^\infty\int_{B(x,1)}-\int_0^{\rho (x)^2}\int_{\mathbb R^d\setminus B(x,1)} \right)\frac{T_t^V(x,y)}{t^{s+1}}dydt-f(x)\frac{\rho (x)^{-2s}}{s}\nonumber\\
&\hspace{-3cm}=:\sum_{j=1}^4I_j(x,s),\quad x\in \mathbb R^d\mbox{ and }s\in (0,1).
\end{align}

It is clear that 
\begin{equation}\label{limI4}
   \lim_{s\rightarrow 0^+}\frac{1}{\Gamma (-s)}I_4(x,s)=\lim_{s\rightarrow 0^+}\frac{f(x)\rho (x)^{-2s}}{\Gamma (1-s)}=f(x),\quad x\in \mathbb R^d. 
\end{equation}

We study now $I_j$, $j=1,2,3$. Using \eqref{1.6} we obtain
\begin{align*}
   |I_1(x,s)|&\leq C\left(\int_{|x-y|\leq 1}|f(y)-f(x)|\int_0^ \infty \frac{e^{-\frac{|x-y|^2}{4t}}}{t^{d/2+s+1}}dtdy\right.\nonumber\\
  &\quad \left.+\int_{|x-y|>1}|f(y)|\int_0^ \infty \frac{e^{-\frac{|x-y|^2}{4t}}}{t^{d/2+s+1}}dtdy\right)\\
  &\leq C\left(\int_{|x-y|\leq 1}\frac{|f(y)-f(x)|}{|x-y|^{d+2s}}dy+\int_{|x-y|>1}\frac{|f(y)|}{|x-y|^{d+2s}}dy\right)\\
  &\leq C\left(\int_{|x-y|\leq 1}|x-y|^{\theta -d-2s}dy+\int_{\mathbb R^d}\frac{|f(y)|}{(1+|x-y|)^d}dy\right)\\
  &\leq C\left(\frac{1}{\theta -2s}+(1+|x|)^d\right),\quad x\in \mathbb R^d,
\end{align*}
provided that $0<s<\tfrac{\theta}{2}$. Then
\begin{equation}\label{1.7}
\lim_{s\rightarrow 0^+}\frac{1}{\Gamma (-s)}I_1(x,s)=0,\quad x\in \mathbb R^d.
\end{equation}

Now, according to \cite[Proposition 2.16]{DzZ3}, there exists a nonnegative function $\phi$ in the Schwartz class $\mathcal S(\mathbb R^d)$ such that 
\begin{equation}\label{TtVTt}
|T_t^V(x,y)-T_t(x-y)|\leq \left(\frac{\sqrt{t}}{\rho (x)}\right)^\delta \phi _t(x-y),\quad x,y\in \mathbb R^d\mbox{ and }t>0,
\end{equation}
where $\delta =2-d/q$, and $\phi_t(z)=t^{-d/2}\phi (z/\sqrt{t})$, $z\in \mathbb R^d$ and $t>0$.
Using this estimate we get
\begin{align*}
|I_2(x,s)|&\leq C|f(x)|\int_0^{\rho (x)^2}\frac{1}{t^{s+1}}\int_{\mathbb R^d}\left(\frac{\sqrt{t}}{\rho (x)}\right)^\delta |\phi _t(x-y)|dydt\nonumber\\
&=C\frac{|f(x)|}{\rho (x)^\delta}\int_0^{\rho (x)^2}\frac{dt}{t^{s -\tfrac{\delta}{2}+1}}=C\frac{|f(x)|}{\rho (x)^{2s}(\tfrac{\delta}{2}-s)},\quad x\in \mathbb R^d,
\end{align*}
provided that $0<s<\tfrac{\delta}{2}$. 
Thus, we deduce that
\begin{equation}\label{1.8}
\lim_{s\rightarrow 0^+}\frac{1}{\Gamma (-s)}I_2(x,s)=0,\quad x\in \mathbb R^d.
\end{equation}

Finally, using \eqref{acotTVimpr} for $N=1$ we get
\begin{align*}
|I_3(x,s)|&\leq C|f(x)|\rho (x)\left(\int_{\rho (x)^2}^\infty\int_{|x-y|<1}+\int_0^{\rho (x)^2}\int_{|x-y|\geq 1}\right)\frac{e^{-c\frac{|x-y|^2}{t}}}{t^{d/2+s+3/2}}dydt\\
&\leq  C|f(x)|\rho (x)\left(\int_{\rho (x)^2}^\infty \int_0^1
+\int_0^\infty \int_1^\infty \right)\frac{e^{-c\frac{r^2}{t}}r^{d-1}}{t^{d/2+s+3/2}}drdt\\
&\leq C|f(x)|\rho (x)\left(\int_{\rho (x)^2}^\infty \frac{dt}{t^{s+3/2}}\int_0^\infty e^{-u}u^{d/2-1}du+\int_1^\infty \frac{dr}{r^{2s+2}}\right)\\
&\leq C|f(x)|\rho (x)\left(1+\rho (x)^{-2s-1}\right),\quad x\in \mathbb R^d\mbox{ and }s\in (0,1).
\end{align*}
It follows that
\begin{equation}\label{1.9}
\lim_{s\rightarrow 0^+}\frac{1}{\Gamma (-s)}I_3(x,s)=0,\quad x\in \mathbb R^d.
\end{equation}
From \eqref{sumI}, \eqref{limI4}, \eqref{1.7}, \eqref{1.8} and \eqref{1.9} we deduce that \eqref{limLips} holds.   
\end{proof}

We now establish some properties for the negative power $\mathcal{L}_V^{-s}$, $s\in (0,1)$, of $\mathcal{L}_V$.

\begin{prop}\label{Ccreg} If $f\in L^2(\mathbb{R}^d)$ is  such that $\int_0^\infty t^{s-1}\|T_t^Vf\|_{L^2(\mathbb{R}^d)}dt<\infty$ with $s\in (0,1)$, then $f\in D(\mathcal{L}_V^{-s})$ and 
$\mathcal{L}_V^{-s}f=\frac{1}{\Gamma(s)}\int_0^\infty t^{s-1}T_t^V(f)dt$, where the integral is understood in the $L^2(\mathbb{R}^d)$-Bochner sense.
\end{prop}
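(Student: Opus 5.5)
The plan is to work entirely through the spectral calculus of $\mathcal L_V$ and the Gamma-function identity
\[
\lambda^{-s}=\frac{1}{\Gamma(s)}\int_0^\infty t^{s-1}e^{-\lambda t}\,dt,\qquad \lambda>0 .
\]
The hypothesis $\int_0^\infty t^{s-1}\|T_t^Vf\|_{L^2(\mathbb R^d)}dt<\infty$ implies that the vector $g:=\frac{1}{\Gamma(s)}\int_0^\infty t^{s-1}T_t^Vf\,dt$ is well defined. The function $t\mapsto T_t^Vf$ is strongly continuous on $(0,\infty)$, hence strongly measurable. Together with the integrability hypothesis, this makes the integral converge in the $L^2(\mathbb R^d)$-Bochner sense. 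The goal is then to show that $f\in D(\mathcal L_V^{-s})$ and that $g=\mathcal L_V^{-s}f$.

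\textbf{Weak identification of $g$.} Fix $h\in L^2(\mathbb R^d)$. Since Bochner integrals commute with bounded functionals,
\[
\langle g,h\rangle=\frac{1}{\Gamma(s)}\int_0^\infty t^{s-1}\langle T_t^Vf,h\rangle\,dt=\frac{1}{\Gamma(s)}\int_0^\infty t^{s-1}\int_0^\infty e^{-\lambda t}\,d\mu^V_{f,h}(\lambda)\,dt .
\]
We would like to apply Fubini and use the Gamma identity to conclude that $\langle g,h\rangle=\int_0^\infty \lambda^{-s}d\mu^V_{f,h}(\lambda)$. The measure $\mu_{f,h}^V$ is complex, so Fubini requires that $\int_0^\infty\int_0^\infty t^{s-1}e^{-\lambda t}\,d|\mu^V_{f,h}|(\lambda)\,dt=\Gamma(s)\int_0^\infty\lambda^{-s}d|\mu_{f,h}^V|(\lambda)$ be finite. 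That finiteness is not available a priori, and this is the main obstacle.

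\textbf{Membership in the domain.} To get around the obstacle, first prove $f\in D(\mathcal L_V^{-s})$ using the positive measure $\mu_{f,f}^V$ and a truncation. For $\varepsilon>0$, let $P_\varepsilon=E_V([\varepsilon,\infty))$. On the range of $P_\varepsilon$ the operator $\mathcal L_V^{-s}$ is bounded. Moreover, $P_\varepsilon$ commutes with $T_t^V$ and is bounded, so it commutes with the Bochner integral. Hence
\[
P_\varepsilon g=\frac{1}{\Gamma(s)}\int_0^\infty t^{s-1}T_t^VP_\varepsilon f\,dt .
\]
For $P_\varepsilon f$, Fubini is legitimate: $\mu^V_{P_\varepsilon f,h}$ is supported in $[\varepsilon,\infty)$, where $\lambda^{-s}\le\varepsilon^{-s}$, and it has finite total variation. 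This gives $P_\varepsilon g=\mathcal L_V^{-s}P_\varepsilon f=\int_{[\varepsilon,\infty)}\lambda^{-s}dE_V(\lambda)f$. Consequently
\[
\int_{[\varepsilon,\infty)}\lambda^{-2s}\,d\mu^V_{f,f}(\lambda)=\|P_\varepsilon g\|_{L^2(\mathbb R^d)}^2\le\|g\|_{L^2(\mathbb R^d)}^2 .
\]
Letting $\varepsilon\to0^+$, monotone convergence shows $\int_0^\infty\lambda^{-2s}d\mu_{f,f}^V<\infty$, that is, $f\in D(\mathcal L_V^{-s})$. Since $0$ is not an eigenvalue of $\mathcal L_V$, we have $E_V(\{0\})=0$.

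\textbf{Identification of the limit.} We know $P_\varepsilon g\to E_V((0,\infty))g=g$ as $\varepsilon\to0^+$. We also know $\mathcal L_V^{-s}P_\varepsilon f\to\mathcal L_V^{-s}f$ in $L^2(\mathbb R^d)$, by dominated convergence in the spectral integral, using $f\in D(\mathcal L_V^{-s})$. Since $P_\varepsilon g=\mathcal L_V^{-s}P_\varepsilon f$ for every $\varepsilon>0$, the two limits coincide and $g=\mathcal L_V^{-s}f$, which is the formula in Proposition~\ref{Ccreg}.

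The only delicate point is justifying the interchange of integrals, and the spectral truncation $P_\varepsilon$ is how I plan to handle it.
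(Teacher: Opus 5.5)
Your argument is correct and is essentially the paper's: your truncation $P_\varepsilon=E_V([\varepsilon,\infty))$ plays the role of the paper's $J_n=\int_{1/n}^\infty\lambda^{-s}\,dE_V(\lambda)f$, and you justify Fubini in the same way, via the bound $\lambda^{-s}\le\varepsilon^{-s}$ (the paper's factor $e^{-t/n}$) together with the finite total variation of $\mu^V_{f,h}$. The difference is in the limit step: the paper shows $\{J_n\}$ is Cauchy by bounding $\|J_n-J_m\|_{L^2(\mathbb R^d)}$ by $\frac{1}{\Gamma(s)}\int_0^\infty t^{s-1}\big(\int_{1/n}^{1/m}e^{-2t\lambda}\,d\mu^V_{f,f}(\lambda)\big)^{1/2}dt$, whereas you use $\|P_\varepsilon g\|_{L^2(\mathbb R^d)}\le\|g\|_{L^2(\mathbb R^d)}$ and $P_\varepsilon\to I$ strongly; your version also makes explicit the identification of the limit with the Bochner integral, which the paper leaves implicit.
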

\begin{proof} Suppose that $f\in L^2(\mathbb R^d)$ such that $\int_0^\infty t^{s -1}\|T_t^Vf\|_{L^2(\mathbb R^d)}dt<\infty$, with $s \in (0,1)$. We are going to see that $f\in D(\mathcal L_V^{-s})$, i.e., $\int_0^\infty \lambda^{-s}dE_V(\lambda)f\in L^2(\mathbb R^d)$ or, equivalently, $\int_0^\infty \lambda^{-2s}d\mu_{f,f}(\lambda)<\infty$.

For every $n\in \mathbb N$ we define $J_n=\int_{1/n}^\infty \lambda^{-s }dE_V(\lambda )f$. We have that
\[
\|J_n\|_{L^2(\mathbb R^d)}=\left(\int_{\frac{1}{n}}^\infty \lambda^{-2s }d\mu _{f,f,}(\lambda)\right)^{1/2}\leq n^{2s}\|f\|_{L^2(\mathbb R^d)},\quad n\in \mathbb N.
\]
Let $g\in L^2(\mathbb R^d)$. We get, for $n,m\in \mathbb N$, $m<n$, 
\begin{align}
\int_{\mathbb R^d}(J_n-J_m)(x)g(x)dx&=\int_{\mathbb R^d}\left(\int_{\frac{1}{n}}^{\frac{1}{m}}\lambda^{-s }dE_V(\lambda) f\right)(x)g(x)dx\nonumber\\
&\hspace{-4cm}=\int_{\frac{1}{n}}^{\frac{1}{m}}\lambda^{-s }d\mu_{f,g}(\lambda)=\int_{\frac{1}{n}}^{\frac{1}{m}}\int_0^\infty \frac{e^{-t\lambda}t^{s -1}}{\Gamma (s)}dtd\mu_{f,g}(\lambda)=\int_0^\infty \frac{t^{s -1}}{\Gamma (s)}\int_{\frac{1}{n}}^{\frac{1}{m}} e^{-t\lambda}d\mu_{f,g}(\lambda)\nonumber\\
&\hspace{-4cm}=\int_0^\infty \frac{t^{s -1}}{\Gamma (s)}\int_{\mathbb R^d}g(x)\left(\int_{\frac{1}{n}}^{\frac{1}{m}}e^{-t\lambda}dE_V(\lambda )f\right)(x)dxdt\label{(1)}\\
&\hspace{-4cm}=\int_{\mathbb R^d}g(x)\int_0^\infty \frac{t^{s -1}}{\Gamma (s)}\left(\int_{\frac{1}{n}}^{\frac{1}{m}}e^{-t\lambda}dE_V(\lambda )f\right)(x)dtdx.\label{(2)}
\end{align}
Note that
\begin{align*}
\int_0^\infty \frac{t^{s -1}}{\Gamma (s)}\int_{\frac{1}{n}}^{\frac{1}{m}} e^{-t\lambda}d|\mu_{f,g}|(\lambda)&\leq \int_0^\infty e^{-\frac{t}{n}}t^{s -1}dt\int_0^\infty d|\mu_{f,g}|(\lambda)\\
&\leq C\|f\|_{L^2(\mathbb R^d)}\|g\|_{L^2(\mathbb R^d)},\quad n,m\in \mathbb N,\,m<n,
\end{align*}
and
\begin{align*}
    \int_0^\infty t^{s -1}\int_{\mathbb R^d}|g(x)|&\left|\left(\int_{\frac{1}{n}}^{\frac{1}{m}}e^{-t\lambda}dE_V(\lambda )f\right)(x)\right|dxdt&\\
    &\leq  \|g\|_{L^2(\mathbb R^d)}\int_0^\infty t^{s-1}\left\|\int_{\frac{1}{n}}^{\frac{1}{m}}e^{-t\lambda}dE_V(\lambda )f\right\|_{L^2(\mathbb R^d)}dt\\
    &\leq \|g\|_{L^2(\mathbb R^d)}\int_0^\infty t^{s -1}\left(\int_{\frac{1}{n}}^{\frac{1}{m}}e^{-2t\lambda}d\mu _{f,f}(\lambda)\right)^{1/2}dt\\
    &\leq \|f\|_{L^2(\mathbb R^d)}\|g\|_{L^2(\mathbb R^d)}\int_0^\infty t^{s -1}e^{-\frac{t}{n}}dt,\quad n,m\in \mathbb N,\,m<n,
\end{align*}
and the interchanges of order of integration in \eqref{(1)} and \eqref{(2)} are justified. 

We deduce that, for every $n,m\in \mathbb N$, with $m<n$,
\[
(J_n-J_m)(x)=\int_0^\infty \frac{t^{s -1}}{\Gamma (s)}\left(\int_{\frac{1}{n}}^{\frac{1}{m}} e^{-t\lambda}dE_V(\lambda)f\right)(x)dt,\quad \text{ a.e. }x\in \mathbb R^d.
\]
Then,
\begin{align*}
\|J_n-J_m\|_{L^2(\mathbb R^d)}&\leq \int_0^\infty \frac{t^{s -1}}{\Gamma (s)}\left\|\int_{\frac{1}{n}}^{\frac{1}{m}} e^{-t\lambda}dE_V(\lambda)f\right\|_{L^2(\mathbb R^d)}dt\\
&=\frac{1}{\Gamma (s)}\int_0^\infty t^{s-1}\left(\int_{\frac{1}{n}}^{\frac{1}{m}} e^{-2t\lambda}d\mu _{f,f}(\lambda)\right)^{1/2}dt,\quad n,m\in \mathbb N,\,m<n.
\end{align*}

On the other hand, we can write
\[
\int_0^\infty t^{s -1}\|T_tf\|_{L^2(\mathbb R^d)}dt=\int_0^\infty t^{s -1}\left(\int_0^\infty e^{-2t\lambda}d\mu _{f,f}(\lambda)\right)^{1/2}dt<\infty,
\]
and, for each $t>0$,
\[
\int_0^{\frac{1}{n}}e^{-2t\lambda}d\mu _{f,f}(\lambda)\leq \int_0^{\frac{1}{n}}d\mu _{f,f}(\lambda)=\langle E_V(0,\tfrac{1}{n})f,f\rangle \longrightarrow 0,\mbox{ as }n\rightarrow \infty.
\]
Monotone convergence theorem leads to 
\[
\lim_{n\rightarrow \infty }\int_0^\infty t^{s -1}\left(\int_0^{\frac{1}{n}}e^{-2t\lambda}d\mu _{f,f}(\lambda)\right)^{1/2}dt=0.
\]
Hence, for every $\varepsilon >0$ there exists $n_0\in \mathbb N$ such that
\[
\|J_n-J_m\|_{L^2(\mathbb R^d)}<\varepsilon,\quad n,m\in \mathbb N,\,n_0<m<n.
\]
Thus we have proved that $\{J_n\}_{n\in \mathbb N}$ is a Cauchy sequence in $L^2(\mathbb R^d)$. Then, there exists $F\in L^2(\mathbb R^d)$ for which $J_n\longrightarrow F$ in $L^2(\mathbb R^d)$. We conclude that $f\in D(\mathcal L_V^{-s})$ and that
\[
\mathcal L_V^{-s}f=\frac{1}{\Gamma (s )}\int_0^\infty t^{s -1}T_t^Vfdt.\qedhere
\]
\end{proof}

Note that from Proposition \ref{Ccreg} we deduce that, for every $s\in (0,1)$,
\[
(\mathcal{L}_V^{-s}f)(x)=\frac{1}{\Gamma(s)}\int_0^\infty t^{s-1}T_t^V(f)(x)dt, \quad a.e.\quad x\in \mathbb{R}^d,
\]
provided that $f\in L^2(\mathbb{R}^d)$ and $\int_0^\infty t^{s-1}\|T_tf\|_{L^2(\mathbb{R}^d)}dt<\infty$.

\begin{cor}\label{cor1} Let $d\geq 3$, $q>d/2$, and $V\in {\rm RH}_q$. If $f\in C_c(\mathbb{R}^d)$ and $s\in (0,1)$, then $f\in D(\mathcal{L}_V^{-s})$, and
\[
\mathcal L_V^{-s}f=\frac{1}{\Gamma(s)}\int_0^\infty t^{s-1}T_t^V(f)dt,
\]
where the integral is understood in the $L^2(\mathbb R^d)$-Bochner sense.
\end{cor}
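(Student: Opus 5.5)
The plan is to apply Proposition \ref{Ccreg}. Since $f\in C_c(\mathbb R^d)\subset L^2(\mathbb R^d)$, it suffices to verify
\[
\int_0^\infty t^{s-1}\|T_t^Vf\|_{L^2(\mathbb R^d)}\,dt<\infty ,\qquad s\in(0,1).
\]
Proposition \ref{Ccreg} then gives $f\in D(\mathcal L_V^{-s})$ together with the Bochner integral representation $\mathcal L_V^{-s}f=\frac{1}{\Gamma(s)}\int_0^\infty t^{s-1}T_t^V(f)\,dt$. I split the $t$-integral at $t=1$.

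For $t\in(0,1)$, contractivity of $\{T_t^V\}_{t>0}$ on $L^2(\mathbb R^d)$ gives $\|T_t^Vf\|_{L^2}\le \|f\|_{L^2}$. Since $s>0$, we have $\int_0^1 t^{s-1}dt<\infty$, so this part is finite.

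For $t\ge 1$, a decay rate better than $t^{-s}$ is needed, and the crude bound \eqref{1.6} already supplies one. By \eqref{1.6} and Young's inequality,
\[
\|T_t^Vf\|_{L^2}\le \|T_t(\cdot)\|_{L^2}\|f\|_{L^1}=C\,t^{-d/4}\|f\|_{L^1},
\]
because $\|T_t\|_{L^2}^2=(8\pi t)^{-d/2}$. As $f\in C_c$, $f\in L^1$. Since $d\ge 3$, we have $d/4>3/4>s-1+1$ only when $s<3/4$, which is not enough for every $s\in(0,1)$. This step is the obstacle.

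To handle the large-$t$ range I use \eqref{acotTVimpr}, which holds because $V\in\RH_q$. It gives, for every $N$,
\[
T_t^V(x,y)\le C_N t^{-d/2}e^{-|x-y|^2/(5t)}\bigl(1+\sqrt t/\rho(y)\bigr)^{-N}.
\]
For $y$ in the compact support $K$ of $f$, $\rho$ is bounded above on $K$. This follows from the known local comparability of $\rho$ (Shen's lemma $\rho(y)\le C\rho(x)(1+|x-y|/\rho(x))^{k_0/(k_0+1)}$), or simply from continuity and positivity of $\rho$; say $\rho\le R_K$ on $K$. Then $T_t^V(x,y)\le C_N R_K^N t^{-N/2}T_{5t/4}(x-y)\cdot C$, and Young's inequality gives
\[
\|T_t^Vf\|_{L^2}\le C\,t^{-d/4-N/2}\|f\|_{L^1}.
\]
Choosing $N=2$ makes $t^{s-1-d/4-1}$ integrable on $(1,\infty)$ for every $s<1$. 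Combining the two ranges yields the required finiteness, and Proposition \ref{Ccreg} concludes. The only point needing care is the upper bound for $\rho$ on compact sets, which is standard from \cite{Sh1}.
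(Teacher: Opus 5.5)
Your argument is correct and is essentially the paper's: you reduce to Proposition \ref{Ccreg} and handle $t<1$ by contractivity. For $t\to\infty$ (where the crude bound \eqref{1.6} fails when $d=3$, $s\ge 3/4$) you get the extra decay from \eqref{acotTVimpr} together with the upper bound for $\rho$ on $\supp f$ from Shen's Lemma 1.4(b). The only difference is that you apply Young's inequality globally, whereas the paper splits $\mathbb R^d$ into $B(0,2R)$ and its complement; both give a decay of order at least $t^{-(d+2)/4}$. Rely on Shen's lemma rather than on ``continuity of $\rho$'', which is not needed and should not be taken for granted.
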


\begin{proof}
 Let $f\in C_c(\mathbb{R}^d)$  and $s\in (0,1)$. We choose $R>0$ such that $\supp(f)\subset B(0,R)$. According to (\ref{1.6}) we have that
 \begin{equation}\label{ZZ1}
|T_t^V(f)(x)|\le Ct^{-d/2}, \quad x\in \mathbb{R}^d\quad and \quad t>0.
 \end{equation}
 By (\ref{acotTVimpr}) and \cite[Lemma 1.4(b)]{Sh1} we get
 \begin{align}\label{ZZ3}
 |T_t^V(f)(x)|&\le C\int_{B(0,R)}|f(y)|\frac{e^{-c\frac{|x-y|^2}{t}}}{t^{d/2}}\frac{\rho(y)}{\sqrt{t}}dy\nonumber\\
 &\le C \frac{e^{-c|x|^2/t}}{t^{(d+1)/2}}\int_{B(0,R)}|f(y)|\rho(y)dy\nonumber\\
 &\le C \frac{e^{-c|x|^2/t}}{t^{(d+1)/2}},\quad x\notin B(0,2R)\quad and\quad t>0.
 \end{align}
 It follows from (\ref{ZZ1}) and (\ref{ZZ3}) that
\begin{align}\label{ZZ2} 
\|T_t^Vf\|_{L^2(\mathbb{R}^d)}&\le \left(\int_{B(0.2R)}|T_t^V(f)(x)|^2dx+\int_{\mathbb{R}^d\setminus B(0,2R)}|T_t^V(f)(x)|^2dx\right)^{1/2}\nonumber\\
&\le C\left(t^{-d}+ \int_{\mathbb{R}^d\setminus B(0,2R)}\frac{e^{-c|x|^2/t}}{t^{d+1}}dx\right)^{1/2}\nonumber\\
&\le C\left(t^{-d/2}+t^{-(d+2)/4}\right)\le Ct^{-(d+2)/4},\quad t\ge 1.
\end{align}
By using (\ref{ZZ1}) and (\ref{ZZ2}) we deduce that
\[
\int_0^\infty t^{s-1}\|T_t^Vf\|_{L^2(\mathbb{R}^d)}dt\le C\left(\int_0^1 t^{s-1}dt+\int_1^\infty t^{s-1-\frac{d+2}{4}}dt\right)<\infty.
\]
According to Proposition \ref{Ccreg} we conclude that $f\in D(\mathcal{L}_V^{-s})$.
\end{proof}

\begin{prop}\label{PropLip-s}
Let $d\geq 3$, $q>d/2$, and $V\in {\rm RH}_q$. For every $f\in {\textup{Lip}}_\textup{loc}^\theta (\mathbb R^d)\cap L^1_{-\varepsilon}(\mathbb R^d)$, with $\theta \in (0,1)$ and $\varepsilon \in (0,d)$, 
\begin{equation}\label{limLip}
\lim_{s\rightarrow 0^+}\frac{1}{\Gamma (s)}\int_0^\infty T_t^V(f)(x)t^{s-1}dt=f(x),\quad x\in \mathbb R^d,
\end{equation}
being the integral absolutely convergent for each $0<s<\min\left\{\tfrac{\varepsilon}{2}, \tfrac14\right\}$. 
\end{prop}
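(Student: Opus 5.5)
The plan is to split the $t$-integral at $t=1$ and use $1/\Gamma(s)=s/\Gamma(s+1)\to 0$ as $s\to 0^+$. Every piece whose integral stays bounded uniformly in $s$ near $0$ then disappears in the limit. The only surviving term should come from $f(x)\int_0^1 t^{s-1}\,dt=f(x)/s$, and $\frac{1}{\Gamma(s)}\cdot\frac{f(x)}{s}=\frac{f(x)}{\Gamma(s+1)}\to f(x)$. Fix $x\in\mathbb R^d$ throughout; all constants may depend on $x$.

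\emph{Large times, $t\geq 1$.} By \eqref{1.6}, $|T_t^V(f)(x)|\le C\int_{\mathbb R^d} t^{-d/2}e^{-|x-y|^2/(4t)}|f(y)|\,dy$. For $t\ge1$ I would use the elementary bound
\[
t^{-d/2}e^{-r^2/(4t)}\le C(t+r^2)^{-d/2}\le C\,t^{-\varepsilon/2}(1+r)^{-(d-\varepsilon)}.
\]
I would combine it with $(1+|x-y|)^{-(d-\varepsilon)}\le C(1+|x|)^{d-\varepsilon}(1+|y|)^{-(d-\varepsilon)}$. Since $f\in L^1_{-\varepsilon}(\mathbb R^d)$, this gives $|T_t^V(f)(x)|\le C_x t^{-\varepsilon/2}$. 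Hence $\int_1^\infty |T_t^V(f)(x)|\,t^{s-1}dt\le C_x/(\varepsilon/2-s)$, which is finite for $s<\varepsilon/2$ and bounded uniformly for $s\le \varepsilon/4$. Multiplied by $1/\Gamma(s)$, this part tends to $0$.

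\emph{Small times, $0<t<1$.} I would write $T_t^V(f)(x)=A_t(x)+f(x)\big(T_t^V(1)(x)-1\big)+f(x)$, where $A_t(x)=\int_{\mathbb R^d}(f(y)-f(x))T_t^V(x,y)\,dy$; this decomposition holds because $T_t(1)=1$. The last term produces the main contribution described above.

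For $A_t(x)$ I split into $|x-y|<1$ and $|x-y|\ge1$. On $B(x,1)$, the local Lipschitz condition gives a bound $C\int |x-y|^\theta t^{-d/2}e^{-|x-y|^2/(4t)}dy\le Ct^{\theta/2}$. On the complement, I would use $t^{-d/2}e^{-r^2/(4t)}\le C t\, r^{-d-2}$ for $r\ge1$ and $t<1$. This gives a bound $Ct\big(\int |f(y)|(1+|x-y|)^{-d}dy+|f(x)|\big)\le C_x t$, using $L^1_{-\varepsilon}\subset$ weight $(1+|y|)^{-d}$ control (indeed $(1+|y|)^{-d}\le (1+|y|)^{-d+\varepsilon}$).

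For the defect term I would invoke \eqref{TtVTt}: $|T_t^V(1)(x)-1|=|\int (T_t^V(x,y)-T_t(x-y))dy|\le \|\phi\|_{1}(\sqrt t/\rho(x))^{\delta}$.

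Thus the small-time remainder integrand is at most $C_x t^{\min\{\theta,\delta\}/2+s-1}$, which is integrable on $(0,1)$ uniformly in $s\in(0,1)$. After multiplying by $1/\Gamma(s)$, it also tends to $0$. Absolute convergence of the whole integral for small $s$ follows from the same bounds.

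The main obstacle is the large-time estimate. There $f$ is only in $L^1_{-\varepsilon}$, so $|T_t^V f(x)|$ need not decay like $t^{-d/2}$. One has to extract exactly the decay $t^{-\varepsilon/2}$ from the Gaussian in exchange for the polynomial weight $(1+|y|)^{-(d-\varepsilon)}$. This step is what forces the restriction $s<\varepsilon/2$. The extra restriction $s<1/4$ in the statement is harmless here, since the argument only needs $s$ small.
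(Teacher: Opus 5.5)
Your proof is correct. It differs from the paper's in how the integral is decomposed.

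The paper cuts at $t=\rho(x)^2$ instead of $t=1$, and it subtracts $f(x)\mathcal X_{B(x,1)}(y)$ for all $t>0$. It bounds $R_1$ by integrating the Gaussian exactly in $t$, which gives $|x-y|^{-d+2s}$, and then compares this with $(1+|x-y|)^{-(d-\varepsilon)}$; this is where $s<\varepsilon/2$ enters. The leftover piece $R_3$, made of $\int_{\rho(x)^2}^\infty\int_{B(x,1)}$ and $\int_0^{\rho(x)^2}\int_{\mathbb R^d\setminus B(x,1)}$, is handled with the improved kernel bound \eqref{acotTVimpr} for $N=1$, which is the source of the restriction $s<1/4$. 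The surviving term there is $f(x)\rho(x)^{2s}/s$ rather than your $f(x)/s$.

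You instead get $|T_t^Vf(x)|\le C_x t^{-\varepsilon/2}$ for $t\ge 1$ directly from \eqref{1.6}, by trading Gaussian decay for the polynomial weight, and you use \eqref{TtVTt} only on $(0,1)$. Your route is therefore more elementary: it needs only \eqref{1.6} and \eqref{TtVTt}, not \eqref{acotTVimpr}, and it gives absolute convergence for every $0<s<\varepsilon/2$.

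What the paper's decomposition buys is reuse downstream. Its pieces $R_j$ mirror the $I_j$ of Proposition \ref{PropLips} and the structure of the correction term $K(x)$ in the pointwise formula for $\log\mathcal L_V$. So in Proposition \ref{Prop2.6} the $s$-derivatives can be taken term by term and identified with the positive-power ones via $H_j^{(r)}(\cdot,0)=J_j^{(r)}(\cdot,0)$. The same pieces, with their explicit dependence on $\rho(x)$, also feed the estimates for $\Lambda_{f,j}$ in the proof of Theorem \ref{Th1.3}. With your cut at $t=1$, that bookkeeping would have to be redone.
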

\begin{proof}
Consider $f\in {\textup{Lip}}^\theta _{\rm loc}(\mathbb R^d)\cap L^1_{-\varepsilon}(\mathbb R^d)$, for some $\varepsilon >0$. We can write
\begin{align}\label{sumR}
\int_0^\infty &T_t^V(f)(x)t^{s-1}dt\nonumber\\
   &=\int_0^\infty(T_t^V(f)(x)-f(x)T_t^V(1)(x))t^{s-1}dt+f(x)\int_{\rho (x)^2}^\infty T_t^V(1)(x)t^{s-1}dt\nonumber\\
    &\quad +f(x)\left(\int_0^{\rho (x)^2} (T_t^V(1)(x)-T_t(1)(x))t^{s-1}dt+\int_0^{\rho (x)^2}T_t(1)(x)t^{s-1}dt\right)\nonumber\\
     &=\int_0^\infty\int_{\mathbb R^d}\big(f(y)-f(x)\mathcal X_{B(x,1)}(y)\big)T_t^V(x,y)t^{s-1}dydt\nonumber\\
      &\quad +f(x)\int_0^{\rho (x)^2} \int_{\mathbb R^d}(T_t^V(x,y)-T_t(x-y))t^{s-1}dt\nonumber\\
      &\quad +f(x)\left(\int_{\rho (x)^2}^\infty\int_{B(x,1)}-\int_0^{\rho (x)^2}\int_{\mathbb R^d\setminus B(x,1)}\right)T_t^V(x,y)t^{s-1}dt+\frac{f(x)\rho (x)^{2s}}{s}\nonumber\\   
    &=:\sum_{j=1}^4R_j(x,s),\quad x\in \mathbb R^d\mbox{ and }s\in (0,\min\left\{\tfrac{\varepsilon }{2},\tfrac{1}{4}\right\}).
\end{align}
Indeed, by proceeding as in the proof of Proposition \ref{PropLips} we get 
\begin{align*}
|R_1(x,s)|&\leq C\left(\int_{|x-y|\leq 1}\frac{|f(x)-f(y)|}{|x-y|^{d-2s}}dy+\int_{|x-y|>1}\frac{|f(y)|}{|x-y|^{d-2s}}dy\right)\\
&\leq C\left(\int_{|x-y|\leq 1}|x-y|^{\theta -d+2s}dy+\int_{\mathbb R^d}\frac{|f(y)|}{(1+|x-y|)^{d-\varepsilon}}\right)\\
&\leq C\left(\frac{1}{\theta }+(1+|x|)^{d-\varepsilon}\right),\quad x\in \mathbb R^d\mbox{ and }s\in \left(0,\tfrac{\varepsilon}{2}\right).
\end{align*}
Also, considering \eqref{TtVTt}, we deduce that, for each $x\in \mathbb R^d$ and $s\in (0,1)$,
\begin{align*}
|R_2(x,s)|&\leq C\frac{|f(x)|}{\rho (x)^{\delta}}\int_0^{\rho (x)^2}t^{s+\tfrac{\delta}{2}-1}dt\leq C\frac{|f(x)|\rho (x)^{2s}}{2s+\delta}\\
&\leq C|f(x)|\rho (x)^{2s}\leq C|f(x)|(1+\rho(x))^2,
\end{align*}
and, from \eqref{acotTVimpr} with $N=1$, we obtain
\begin{align*}
|R_3(x,s)|&\leq C|f(x)|\rho (x)\left(\int_{\rho (x)^2}^\infty\int_{|x-y|<1}+\int_0^{\rho (x)^2}\int_{|x-y|\geq 1}\right)\frac{e^{-c\frac{|x-y|^2}{t}}}{t^{d/2-s+3/2}}dydt\\
&\leq  C|f(x)|\rho (x)\left(\int_{\rho (x)^2}^\infty \int_0^1
+\int_0^\infty \int_1^\infty \right)\frac{e^{-c\frac{r^2}{t}}r^{d-1}}{t^{d/2-s+3/2}}drdt\\
&\leq C|f(x)|\rho (x)\left(\int_{\rho (x)^2}^\infty \frac{dt}{t^{-s+3/2}}+\int_1^\infty \frac{dr}{r^{-2s+2}}\right)\\
&=C|f(x)|\frac{\rho (x)^{2s}+\rho (x)}{1-2s}\\\
&\leq C|f(x)|(\rho (x)^{2s}+\rho (x))\leq C|f(x)|(1+\rho(x)),\quad x\in \mathbb R^d\mbox{ and }s\in \left(0,\tfrac{1}{4}\right).
\end{align*}
Thus, the integral in \eqref{limLip} is absolutely convergent. We conclude that
\[
\lim_{s\rightarrow 0^+}\frac{1}{\Gamma (s)}\sum_{j=1}^4R_j(x,s)=\lim_{s\rightarrow 0^+}\sum_{j=1}^4\frac{sR_j(x,s)}{\Gamma (1+s)}=\lim_{s\rightarrow 0^+}f(x)\rho (x)^{2s}=f(x),\quad x\in \mathbb R^d.\qedhere
\]
\end{proof}

\begin{rem}\label{rem} 
Let $d\geq 3$, $q>d/2$, and $V\in {\rm RH}_q$. By virtue of Propositions \ref{PropLips} and \ref{PropLip-s}, for each $\theta \in (0,1)$ we define $\mathcal L_V^sf$, when $f\in {\textup{Lip}}_\textup{loc}^\theta (\mathbb R^d)\cap L^1_0(\mathbb R^d)$ and $s\in (0,\min\left\{\tfrac{\theta}{2},\tfrac{\delta}{2}\right\})$, where $\delta =2-d/q$, by 
\[
\mathcal L_V^sf(x)=\frac{1}{\Gamma (-s)}\int_0^\infty \frac{T_t^V(f)(x)-f(x)}{t^{s+1}}dt,\quad x\in \mathbb R^d, 
\]
and $\mathcal L_V^{-s}f$, when $f\in {\textup{Lip}}_\textup{loc}^\theta (\mathbb R^d)\cap L^1_{-\varepsilon}(\mathbb R^d)$, for some $\varepsilon \in (0,d)$ and $s\in (0,\min\left\{\tfrac{\varepsilon}{2},\tfrac{1}{4}\right\})$,
\[
\mathcal L_V^{-s}f(x)=\frac{1}{\Gamma (s)}\int_0^\infty T_t^Vf(x)t^{s-1}dt,\quad x\in \mathbb R^d.
\]
Moreover, for every $f\in {\textup{Lip}}_\textup{loc}^\theta (\mathbb R^d)\cap L^1_{-\varepsilon}(\mathbb R^d)$, for some $\varepsilon \in (0,d)$, we have that 
\[
\lim_{s\rightarrow 0^+}\mathcal L_V^sf(x)=\lim_{s\rightarrow 0^+}\mathcal L_V^{-s}f(x)=f(x),\quad x\in \mathbb R^d.
\]
\end{rem}
Next results involve the higher order logarithmic operators $\log^k\mathcal{L}_V$ of $\mathcal{L}_V$ with $k\in \mathbb{N}$.

\begin{prop}\label{derlog} Let $s_0\in (0,1]$ and $k_0\in \mathbb{N}$. Suppose that $f\in D(\mathcal{L}_V^{s_0})\cap D(\log^{k_0}\mathcal{L}_V)$. 
\begin{enumerate}[label=\textnormal{(\alph*)}]
\item \label{itm: derlog-a} For every $s\in (0,s_0)$ and $k\in \mathbb{N}$, $k\le k_0$, we have $f\in D(\mathcal{L}_V^s\log^k\mathcal{L}_V)\cap D((\log^k\mathcal{L}_V)\mathcal{L}_V^s)$ and $(\mathcal{L}_V^s\log^k\mathcal{L}_V)f=((\log^k\mathcal{L}_V)\mathcal{L}_V^s)f$.
 
\item \label{itm: derlog-b} For every $s\in (0,s_0)$ and $k\in \mathbb{N}$, $k\le k_0$, $\partial_s^k\mathcal{L}_V^sf=((\log^k\mathcal{L}_V)\mathcal{L}_V^s)f$ and $\lim_{s\to 0^+}\partial_s^k\mathcal{L}_V^sf=(\log^k\mathcal L_V)f$, in $L^2(\mathbb{R}^d)$, where $\partial_s$ is understood in $L^2(\mathbb{R}^d)$.
\end{enumerate}
\end{prop}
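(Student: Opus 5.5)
The whole statement is a consequence of the functional calculus for $E_V$. All the needed facts reduce to integrability conditions with respect to the scalar measure $\mu^V_{f,f}$. Throughout I use that $\mu^V_{f,f}$ is a finite measure on $(0,\infty)$, which holds since $V>0$ a.e. and hence $0$ is not an eigenvalue. The hypotheses say precisely that
\[
\int_0^\infty \lambda^{2s_0}\,d\mu^V_{f,f}(\lambda)<\infty \quad\text{and}\quad \int_0^\infty |\log\lambda|^{2k_0}\,d\mu^V_{f,f}(\lambda)<\infty .
\]
The elementary estimate I rely on is the following. For $0<s<s_0$ and $0\le k\le k_0$ there is $C=C(s,s_0,k,k_0)$ with
\[
\lambda^{2s}|\log\lambda|^{2k}\le C\bigl(1+|\log\lambda|^{2k_0}+\lambda^{2s_0}\bigr),\quad \lambda>0 .
\]
On $(0,1]$ the left side is at most $|\log\lambda|^{2k}\le 1+|\log\lambda|^{2k_0}$. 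On $(1,\infty)$ we have $\lambda^{2s}(\log\lambda)^{2k}\le C\lambda^{2s_0}$ because $s<s_0$. The same bound holds uniformly for $s$ in a compact subinterval $[0,s_1]\subset[0,s_0)$.

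For \ref{itm: derlog-a}, I use the standard facts for Borel functions $\Phi,\Psi$ (see \cite[Theorem 3, p.~343]{Yo}). One has $f\in D(\Phi(\mathcal L_V)\Psi(\mathcal L_V))$ if and only if $f\in D(\Psi(\mathcal L_V))\cap D((\Phi\Psi)(\mathcal L_V))$, and then $\Phi(\mathcal L_V)\Psi(\mathcal L_V)f=(\Phi\Psi)(\mathcal L_V)f$. Here I use that $d\mu^V_{\Psi(\mathcal L_V)f,\Psi(\mathcal L_V)f}=|\Psi|^2d\mu^V_{f,f}$. With $\Phi=\lambda^s$ and $\Psi=(\log\lambda)^k$, and also with the roles swapped, the estimate above gives $f\in D(\mathcal L_V^s)\cap D(\log^k\mathcal L_V)\cap D(\lambda^s(\log\lambda)^k(\mathcal L_V))$. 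Both compositions then equal $\int_0^\infty\lambda^s(\log\lambda)^k\,dE_V(\lambda)f$.

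For \ref{itm: derlog-b}, I would argue by induction on $k$ and show that $s\mapsto \lambda^s(\log\lambda)^{k-1}(\mathcal L_V)f$ is $L^2$-differentiable on $(0,s_0)$ with derivative $\lambda^s(\log\lambda)^k(\mathcal L_V)f$. Fix $s\in(0,s_0)$ and choose $s_1$ with $s<s_1<s_0$. For $|h|$ small, the isometry of the functional calculus gives
\[
\Bigl\|\tfrac{1}{h}\bigl(\mathcal L_V^{s+h}-\mathcal L_V^{s}\bigr)(\log^{k-1}\mathcal L_V)f-(\log^k\mathcal L_V)\mathcal L_V^sf\Bigr\|_{L^2}^2
=\int_0^\infty \Bigl|\tfrac{\lambda^{h}-1}{h}-\log\lambda\Bigr|^2\lambda^{2s}|\log\lambda|^{2k-2}\,d\mu^V_{f,f}(\lambda).
\]
The integrand tends to $0$ pointwise. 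To dominate it, apply the mean value theorem: $|(\lambda^h-1)/h|\le|\log\lambda|\max(\lambda^{h},1)$. Hence the integrand is bounded by $4|\log\lambda|^{2k}\lambda^{2s}\max(\lambda^{2|h|},1)$, which is at most $C\lambda^{2s_1}|\log\lambda|^{2k}$ for $\lambda>1$ and at most $C|\log\lambda|^{2k}$ for $\lambda\le1$. This majorant is integrable by the estimate above, so dominated convergence gives the derivative. This yields $\partial_s^k\mathcal L_V^sf=((\log^k\mathcal L_V)\mathcal L_V^s)f$.

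For the limit as $s\to0^+$, I compute
\[
\|(\log^k\mathcal L_V)\mathcal L_V^sf-(\log^k\mathcal L_V)f\|_{L^2}^2=\int_0^\infty |\lambda^s-1|^2|\log\lambda|^{2k}\,d\mu^V_{f,f}(\lambda).
\]
The integrand tends to $0$ pointwise and is dominated for $s\le s_1$ by $C(|\log\lambda|^{2k}+\lambda^{2s_1}|\log\lambda|^{2k})$, which is integrable. Dominated convergence then gives the limit. The only delicate point is the uniform domination near $\lambda\to\infty$ when $k=k_0$. This is where strict inequality $s<s_0$ is essential, since it lets the power gap $\lambda^{2(s_0-s_1)}$ absorb the logarithm $|\log\lambda|^{2k_0}$. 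I expect this bookkeeping to be the main obstacle, and everything else is routine.
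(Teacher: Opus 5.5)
Your proposal is correct and follows essentially the paper's route: part (a) comes from the functional calculus in Yosida, and part (b) uses the spectral isometry, the mean value theorem for $\lambda^h-1$, dominated convergence, and iteration in $k$. Your choice of $s_1\in(s,s_0)$ is more careful than the paper, whose domination uses $f\in D((\log\mathcal L_V)\mathcal L_V^{s_0})$, which the hypotheses do not imply; one small slip is that for $h<0$ and $\lambda<1$ the factor should be $\max(\lambda^{2h},1)$ rather than $\max(\lambda^{2|h|},1)$, though this still gives your final majorant once $|h|<s$.
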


\begin{proof}
  Since $D(\mathcal{L}_V^{s_0})\subset D(\mathcal{L}_V^s)$, for every $s\in (0,s_0)$, and $D(\log^{k_0}\mathcal{L}_V)\subset D(\log^k\mathcal{L}_V)$, for each $k\in \mathbb{N}$, $k\le k_0$, assertions in \ref{itm: derlog-a} can be deduced from \cite[Theorem 3(iv), p. 343]{Yo}.

  We now prove \ref{itm: derlog-b}. Let $s\in (0,s_0)$. We consider the function 
  \begin{align*}
  \mathcal H(h):&=\frac{\mathcal{L}_V^{s+h}f-\mathcal{L}_V^sf}{h}-((\log\mathcal{L}_V)\mathcal {L}_V^s)f\\
  &= \left(\frac{\mathcal {L}_V^h-I}{h}-\log\mathcal{L}_V\right)\mathcal{L}_V^sf, \quad 0<h<s_0-s.
      \end{align*}
We have that
\[
\|\mathcal{H}(h)\|_{L^2(\mathbb {R}^d)}=\left(\int_0^\infty\left|\frac{\lambda^h-1}{h}-\log\lambda\right|^2\lambda^{2s}d\mu_{f,f}(\lambda)\right)^{1/2}, \quad 0<h<s_0-s.
\]
Mean value theorem implies that, for every $\lambda>0$ there exists $u\in (0,h) $ such that $\lambda^h-1=h\lambda^u\log\lambda$. Then 
\[
\Big|\frac{\lambda^h-1}{h}-\log\lambda\Big|\le |\log\lambda|(2+|\lambda|^{s_0-s}),
\]
for every $\lambda>0$, and $h\in (0,s_0-s)$.

Since $f\in D((\log\mathcal{L}_V)\mathcal{L}_V^{s_0})$, by applying dominated convergence theorem we deduce that $\lim_{h\to 0^+}\|\mathcal H(h)\|_{L^2(\mathbb{R}^d)}=0$. Thus we prove that
\[
\partial_s\mathcal{L}_V^sf=(\mathcal{L}_V^s\log\mathcal{L}_V)f.
\]
By iterating the argument we can prove that, for every $k\in \mathbb{N}$, $k\le k_0$,
\[
\partial_s^k\mathcal{L}_V^sf=(\mathcal{L}_V^s\log^k\mathcal{L}_V)f.
\]
Let $k\in \mathbb{N}$, $k\le k_0$. Since $(\log^k\mathcal{L}_V)f\in D(\mathcal{L}_V^{s_0})$ by proceeding as above we get
\[
\lim_{s\rightarrow 0^+}\big\|\partial_s^k\mathcal{L}_V^sf-(\log^k\mathcal{L}_V)f\big\|_{L^2(\mathbb{R}^d)}=\lim_{s\rightarrow 0^+}\left(\int_0^\infty|(\lambda^s-1)\log^k\lambda|^2d\mu_{f,f}(\lambda)\right)^{1/2}=0.\qedhere\]
\end{proof}
From Proposition \ref{derlog} we deduce the following property.

\begin{cor}\label{Ccreglog}
Let $d\ge 3$, $q>d/2$, $V\in \rm{RH}_q$ and $k\in \mathbb{N}$. For every $f\in C_c^\infty(\mathbb{R}^d)$,
\[
\lim_{s\to 0^+}\partial_s^k\mathcal{L}_V^sf=\log^k \mathcal{L}_V f, \quad in\quad L^2(\mathbb{R}^d).
\]
\begin{proof}
According to Corollary \ref{Ccreg}, $C_c^\infty(\mathbb{R}^d)\subset D(\mathcal{L}_V^{-s})$, $s\in (0,1)$. We also have that $C_c^\infty(\mathbb{R}^d)\subset D(\mathcal{L}_V)\subset D(\mathcal{L}_V^s)$, $s\in (0,1)$. Let $k\in \mathbb{N}$ and $f\in C_c^\infty(\mathbb{R}^d)$. We obtain
\begin{align*}
\int_0^\infty |\log\lambda|^{2k}d\mu_{f,f}(\lambda)&=\int_0^1\lambda|\log\lambda|^{2k}\lambda^{-1}d\mu_{f,f}(\lambda)+\int_1^\infty\lambda^{-1}|\log\lambda |^{2k}\lambda d\mu_{f,f}(\lambda),\\
&\le C\left(\|\mathcal{L}_V^{-1/2}f\|_{L^2(\mathbb{R}^d)}+\|\mathcal{L}_V^{1/2}f\|_{L^2(\mathbb{R}^d)}\right)<\infty.
\end{align*}
Then $f\in D(\log^k\mathcal{L}_V)$. By Proposition \ref{derlog} we conclude the desired result.
\end{proof}
\end{cor}

Note that by Corollary \ref{Ccreglog} we have that, for every $f\in C_c^\infty(\mathbb{R}^d)$ and $k\in \mathbb{N}$, 
\[
(\log^k\mathcal{L}_V)(f)(x)=(\mathbb{L}\textup{og}^k\mathcal L_V)(f)(x),\quad\mbox{a.e.}\,\,x\in \mathbb{R}^d.
\]

The next result justifies the definition of the operator $\mathbb{L}\textup{og}^k\mathcal L_V$, for every $k\in \mathbb N$, by means of the formula given in \eqref{Logk}.
\begin{prop}\label{propoY}\leavevmode
Let $k\in \mathbb N$ and $\theta \in (0,1)$. For every $f\in {\textup{Lip}}_\textup{loc}^\theta (\mathbb R^d)\cap L^1_0(\mathbb R^d)$, we have that
\begin{align}
    &\lim_{s\rightarrow 0^+}\partial _s^k\mathcal L_V^s(f)(x)\nonumber&\\
    &=\sum_{\ell =0}^k(-1)^\ell \binom{k}{\ell }\partial_s^{k-\ell }\left(\frac{1}{\Gamma (1-s)}\right)\Big|_{s=0}\nonumber\\
    &\quad \times\left[\ell \left(\int_0^\infty \frac{T_t^V(f)(x)-T_t^V(1)(x)f(x)}{t}(\log t)^{\ell -1}dt\right.\right.\nonumber\\
    &\quad +f(x)\int_0^{\rho (x)^2}\frac{T_t^V(1)(x)-T_t(1)(x)}{t}(\log t)^{\ell -1}dt\nonumber\\
    &\left. \left.\quad +f(x)\int_{\rho (x)^2}^\infty \frac{T_t^V(1)(x)}{t}(\log t)^{\ell -1}dt\right)+(2\log \rho (x))^\ell f(x)\right],\quad x\in \mathbb R^d.
\end{align}
\end{prop}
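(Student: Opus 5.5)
We start from the decomposition \eqref{sumI} in the proof of Proposition \ref{PropLips}, written for $x\in\mathbb R^d$ fixed and $0<s<\min\{\tfrac{\theta}{2},\tfrac{\delta}{2}\}$. Using $\Gamma(-s)=-\Gamma(1-s)/s$, we have
\[
\mathcal L_V^s(f)(x)=\frac{1}{\Gamma(1-s)}\Big(-s\,A(x,s)+\rho(x)^{-2s}f(x)\Big),
\]
where
\begin{align*}
A(x,s)&=\int_0^\infty \frac{T_t^V(f)(x)-T_t^V(1)(x)f(x)}{t^{s+1}}dt+f(x)\int_0^{\rho(x)^2}\frac{T_t^V(1)(x)-T_t(1)(x)}{t^{s+1}}dt\\
&\quad+f(x)\int_{\rho(x)^2}^\infty\frac{T_t^V(1)(x)}{t^{s+1}}dt,
\end{align*}
that is, $A=I_1+I_2+I_3$ written before the splitting into $B(x,1)$ and its complement. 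Set $G(s)=-sA(x,s)+\rho(x)^{-2s}f(x)$. We apply Leibniz's rule to the product $\frac{1}{\Gamma(1-s)}\cdot G(s)$, which gives $\partial_s^k\mathcal L_V^sf(x)=\sum_{\ell=0}^k\binom{k}{\ell}\partial_s^{k-\ell}\big(\frac{1}{\Gamma(1-s)}\big)\,\partial_s^\ell G(s)$. Since $1/\Gamma(1-s)$ is entire, its derivatives are continuous at $s=0$. It therefore suffices to show that
\[
\lim_{s\to0^+}\partial_s^\ell G(s)=(-1)^\ell\Big[\ell\,\big(\text{bracketed integrals with }(\log t)^{\ell-1}\big)+(2\log\rho(x))^\ell f(x)\Big].
\]

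For the term $\rho(x)^{-2s}f(x)$ this is immediate: $\partial_s^\ell \rho(x)^{-2s}=(-2\log\rho(x))^\ell\rho(x)^{-2s}\to(-1)^\ell(2\log\rho(x))^\ell$. For $-sA(x,s)$, Leibniz's rule gives $\partial_s^\ell(sA)=s\,\partial_s^\ell A+\ell\,\partial_s^{\ell-1}A$. Differentiating under the integral sign, $\partial_s^jA(x,s)$ is the same expression with $t^{-s-1}$ replaced by $(-\log t)^jt^{-s-1}$. The claim then follows if we prove two facts. First, $\partial_s^{\ell-1}A(x,s)\to(-1)^{\ell-1}\times$ the bracketed integrals with $s=0$. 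Second, $s\,\partial_s^\ell A(x,s)\to0$. Combining, $-\ell\,\partial_s^{\ell-1}A\to(-1)^\ell\ell(\cdots)$, which matches the stated formula.

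Both facts require dominated bounds, uniform in small $s$, for the integrands weighted by $|\log t|^j$. We redo the three estimates of Proposition \ref{PropLips}.

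\textbf{$I_1$-type term.} Using \eqref{1.6}, we bound $\int_0^\infty t^{-d/2-s-1}|\log t|^je^{-r^2/(4t)}dt\le C_j\,r^{-d-2s}(1+|\log r|)^j$. The local part is then controlled by $\int_{|x-y|<1}|x-y|^{\theta-d-2s}(1+|\log|x-y||)^jdy<\infty$, uniformly for $s\le\theta/4$. For the far part we must be more careful: with $s=0$ the integrand near $t\to\infty$ is not obviously integrable, since $\int_1^\infty|\log t|^jt^{-1}dt$ diverges. Here we use the improved kernel bound \eqref{acotTVimpr} with $N=1$, which gives an extra factor $\rho(x)/\sqrt t$ for $t$ large. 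This yields a bound of the form $C(|x-y|^{-d}+\ldots)$ times logarithms, and the far part becomes $\int_{|x-y|>1}|f(y)|\,|x-y|^{-d}(\log|x-y|)^jdy$. That integral is \emph{not} guaranteed finite by $f\in L^1_0$.

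We resolve this by splitting $t\le|x-y|^2$ and $t\ge|x-y|^2$. Gaussian decay handles the small-$t$ range. On the large-$t$ range the factor $\rho(x)t^{-1/2}$ gives $\rho(x)|x-y|^{-d-1}$, and the logarithm is absorbed by the extra power. Doing this consistently, i.e.\ obtaining integrability with the logarithmic weights at $s=0$ in the $I_1$ far-field part, is the main obstacle.

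\textbf{$I_2$-type term.} By \eqref{TtVTt}, $t^{\delta/2-1}|\log t|^j$ is integrable on $(0,\rho(x)^2)$.

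\textbf{$I_3$-type term.} By \eqref{acotTVimpr}, $t^{-3/2}|\log t|^j$ is integrable on $(\rho(x)^2,\infty)$, and the near-zero piece is controlled by Gaussian decay when $|x-y|\ge1$.

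With these uniform dominations, dominated convergence proves both facts: the derivatives are continuous up to $s=0$ and bounded, so $s\,\partial_s^\ell A\to0$. Substituting into the Leibniz expansion yields the formula.
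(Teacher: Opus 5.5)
Your overall architecture is the same as the paper's. You apply Leibniz's rule to $\frac{1}{\Gamma(1-s)}\bigl(-sA(x,s)+\rho(x)^{-2s}f(x)\bigr)$, use $\partial_s^\ell(sA)=s\,\partial_s^\ell A+\ell\,\partial_s^{\ell-1}A$, differentiate under the integral sign, and conclude by dominated convergence with bounds uniform down to $s=0$. Your sign bookkeeping is correct, and so are the $I_2$-, $I_3$- and near-field $I_1$-estimates.

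\textbf{The gap.} It lies in the step you yourself call the main obstacle, and the resolution you sketch fails. Take $r=|x-y|>1$ and use only Gaussian decay on $t\le r^2$. This does not remove the logarithm: the measure $t^{-d/2-1}e^{-r^2/(4t)}\,dt$ has its mass at $t\approx r^2$, where $|\log t|\approx 2\log r$. Indeed
\[
\int_{r^2/2}^{r^2}\frac{|\log t|^j}{t^{d/2+1}}\,e^{-\frac{r^2}{4t}}\,dt\geq c\,r^{-d}(\log r)^j ,
\]
so your dominating function is again $|x-y|^{-d}(\log|x-y|)^j$. That weight is not integrable against every $f\in L^1_0(\mathbb R^d)$; for instance take $|f(y)|\sim(\log|y|)^{-2}$ at infinity. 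So the domination needed for differentiating under the integral sign and for dominated convergence is still missing.

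\textbf{The missing idea.} Split in $t$ at the fixed scale $t=1$ and let the $t$-integral absorb the logarithm, as the paper does.
\begin{itemize}
\item For $t<1$, write $t^{-d/2-s-1}|\log t|^r=|\log t|^r t^{\varepsilon-1}\cdot t^{-d/2-s-\varepsilon}$ and use $t^{-d/2-s-\varepsilon}e^{-|x-y|^2/(4t)}\le C|x-y|^{-d-2s-2\varepsilon}$.
\item For $t>1$, use \eqref{acotTVimpr} with $N=1$ on the whole range, not only for $t\ge|x-y|^2$. This gives the bound $C\rho(x)|\log t|^r t^{-3/2}|x-y|^{-d-2s}$.
\end{itemize}
Since $\int_0^1|\log t|^r t^{\varepsilon-1}\,dt$ and $\int_1^\infty|\log t|^r t^{-3/2}\,dt$ are finite, the remaining $y$-integrals carry no logarithm. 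They are $\int_{|x-y|\le 1}|x-y|^{\theta-d-2s-2\varepsilon}\,dy$, finite for $2s+2\varepsilon<\theta$, and
\[
\int_{|x-y|>1}\frac{|f(y)|}{|x-y|^{d}}\,dy\le C(1+|x|)^d\int_{\mathbb R^d}\frac{|f(y)|}{(1+|y|)^d}\,dy<\infty .
\]

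Equivalently, in the far field you can keep the factor $\rho(x)/\sqrt t$ for all $t>0$. This yields the bound $C\rho(x)|x-y|^{-d-1}(1+\log|x-y|)^j\le C\rho(x)|x-y|^{-d}$ uniformly for small $s\ge 0$. With either domination the rest of your argument goes through.
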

\begin{proof}
Let $f\in {\textup{Lip}}_\textup{loc}^\theta (\mathbb R^d)\cap L^1_0(\mathbb R^d)$. We have (see Remark \ref{rem}) 
\begin{align*}
\mathcal L_V^sf(x)&=-\frac{s}{\Gamma (1-s)}\int_0^\infty \frac{T_t^Vf(x)-f(x)}{t^{s+1}}dt=-\frac{s}{\Gamma (1-s)}\sum_{j=1}^4I_j(x,s)\\
&=-\frac{1}{\Gamma (1-s)}\left(s\sum_{j=1}^3I_j(x,s)-\frac{f(x)}{\rho (x)^{2s}}\right),\, x\in \mathbb R^d\mbox{ and }s\in \left(0,\min\left\{\tfrac{\theta}{2},\tfrac{\delta}{2}\right\}\right) ,
\end{align*}
where $I_j$, $j=1,2,3$, are given in \eqref{sumI}.

Then, 
\begin{align}\label{partial LV}
\partial _s^k\mathcal L_V^sf(x)&=-\sum_{\ell =0}^k\binom{k}{\ell }\partial_s^{k-\ell }\left(\frac{1}{\Gamma (1-s)}\right)\partial _s^\ell \left(s\sum_{j=1}^3I_j(x,s)-\frac{f(x)}{\rho (x)^{2s}}\right)\nonumber\\
&=-\sum_{\ell =0}^k\binom{k}{\ell }\partial_s^{k-\ell }\left(\frac{1}{\Gamma (1-s)}\right)\left(s\sum_{j=1}^3\partial _s^\ell I_j(x,s)+\ell \sum_{j=1}^3\partial _s^{\ell -1}I_j(x,s)\right.\nonumber\\
&\quad \left. -(-2\log \rho (x))^\ell \frac{f(x)}{\rho (x)^{2s}}\right),\quad x\in \mathbb R^d\mbox{ and }s\in \big(0,\min\left\{\tfrac{\theta}{2},\tfrac{\delta}{2}\right\}\big).
\end{align}
For $\ell =0$, $\partial _s^{\ell -1}$ is understood as the null operator. 

Let $r\in \mathbb N\cup\{0\}$. We have that, for every $x\in \mathbb R^d$, and $0<s<\min\{\frac{\theta}{2}-\varepsilon, \frac{\delta}{2}\}$, with $0<\varepsilon<\frac{\theta }{2}$, 
\begin{align}\label{sumJ}
\sum_{j=1}^3\partial _s^rI_j(x,s)&=(-1)^r\left(\int_0^\infty \int_{\mathbb R^d}(f(y)-f\mathcal X_{B(x,1)}(y))\frac{T_t^V(x,y)}{t^{s+1}}(\log t)^rdydt\right.\nonumber\\
&\quad +f(x)\int_0^{\rho (x)^2}\int_{\mathbb R^d}\frac{T_t^V(x,y)-T_t(x-y)}{t^{s+1}}(\log t)^rdydt\nonumber\\
&\quad \left.+f(x)\left(\int_{\rho (x)^2}^\infty\int_{B(x,1)}-\int_0^{\rho (x)^2}\int_{\mathbb R^d\setminus B(x,1)} \right)\frac{T_t^V(x,y)}{t^{s+1}}(\log t)^rdydt\right)\nonumber\\
&:=(-1)^r\sum_{j=1}^3J_j^{(r)}(x,s).
\end{align}

The differentiation under the integral sign can be justified by proceeding as in the proof of \eqref{1.7}, \eqref{1.8} and \eqref{1.9}.  Using \eqref{1.6} and  \eqref{acotTVimpr} (for $N=1$) and taking into account that $f\in {\textup{Lip}}^\theta _{\rm loc}(\mathbb R^d)$ we obtain
\begin{align*}
  |J_1^{(r)}(x,s)|&\leq C\left(\int_0^1\frac{|\log t|^r}{t^{d/2+s+1}}\left(\int_{|x-y|\leq 1}|f(y)-f(x)|e^{-\frac{|x-y|^2}{4t}}dy\right.\right.\\
  &\hspace{-1.5cm}\quad \left.+\int_{|x-y|>1}|f(y)|e^{-\frac{|x-y|^2}{4t}}dy\right)dt\\
 &\hspace{-1.5cm}\quad +\rho (x)\int_1^\infty \frac{|\log t|^r}{t^{d/2+s+3/2}}\left(\int_{|x-y|\leq 1}|f(y)-f(x)|e^{-\frac{|x-y|^2}{5t}}dy\right.\\
  &\hspace{-1.5cm}\quad \left.\left.+\int_{|x-y|>1}|f(y)|e^{-\frac{|x-y|^2}{5t}}dy\right)dt\right)\\
  &\hspace{-1.5cm}\leq C\left(\int_0^1\frac{|\log t|^r}{t^{1-\varepsilon}}dt\left(\int_{|x-y|\leq 1}|x-y|^{\theta -d-2s-2\varepsilon}dy+\int_{|x-y|>1}\frac{|f(y)|}{|x-y|^{d+2s+2\varepsilon}}dy\right)\right.\\
 &\hspace{-1.5cm}\left.\quad +\rho (x)\int_1^\infty \frac{(\log t)^r}{t^{3/2}}dt\left(\int_{|x-y|\leq 1}|x-y|^{\theta -d-2s}dy+\int_{|x-y|>1}\frac{|f(y)|}{|x-y|^{d+2s}}dy\right)\right)\\
  &\hspace{-1.5cm}\leq C\left(\frac{1}{\theta -2s-2\varepsilon}+\frac{\rho (x)}{\theta -2s}+(1+\rho (x))(1+|x|)^d\right)\\
  &\hspace{-1.5cm}\leq C(1+\rho (x))(1+|x|)^d\Big(\frac{1}{\theta -2s-2\varepsilon}+1\Big),\quad x\in \mathbb R^d,\;s\in \left(0,\tfrac{\theta}{2}-\varepsilon\right).
\end{align*}
On the other hand, using \eqref{TtVTt} we get that
\begin{align*}
    |J_2^{(r)}(x,s)|&\leq C\frac{|f(x)|}{\rho (x)^\delta}\int_0^{\rho (x)^2}\frac{|\log t|^r}{t^{s+1-\tfrac{\delta}{2}}}\int_{\mathbb R^d}\phi _t(x-y)dydt\\
    &\leq  C\frac{|f(x)|}{\rho (x)^\delta}\int_0^{\rho (x)^2}\frac{|\log t|^r}{t^{s+1-\tfrac{\delta}{2}}}dt<\infty,\quad x\in \mathbb R^d,\;s\in \left(0,\tfrac{\delta}{2}\right),
\end{align*}
and, by \eqref{acotTVimpr} (for $N=1$),
\begin{align*}
|J_3^{(r)}(x,s)|&\leq C\rho (x)|f(x)|\left(\!\int_{\rho (x)^2}^\infty\!\!\int_{|x-y|<1}+\! \int_0^{\rho (x)^2}\!\!\int_{|x-y|\geq 1}\right)\frac{e^{-\frac{|x-y|^2}{5t}}}{t^{d/2+s+3/2}}|\log t|^rdydt\\
&\leq C\rho (x)|f(x)|\left(\int_{\rho (x)^2}^\infty \frac{|\log t|^r}{t^{d/2+s+3/2}}dt\right.\\
&\left.\quad +\int_0^{\rho (x)^2}\frac{|\log t|^r}{\sqrt{t}}dt\int_{|x-y|\geq 1}\frac{dy}{|x-y|^{d+2s+2}}\right)<\infty ,\, x\in \mathbb R^d,\;s\in (0,1).
\end{align*}
Thus, for $0<s<\min\{\tfrac{\theta}{2}-\varepsilon,\tfrac{\delta}{2}\}$, \eqref{partial LV} can be rewritten as
\begin{align*}
\partial _s^k\mathcal L_V^sf(x)&=\sum_{\ell =0}^k\binom{k}{\ell }\partial_s^{k-\ell }\left(\frac{1}{\Gamma (1-s)}\right)\left((-1)^{\ell+1} s\sum_{j=1}^3J_j^{(\ell)}(x,s)\right.\nonumber\\
&\quad \left.+(-1)^{\ell}\ell \sum_{j=1}^3J_j^{(\ell -1)}(x,s)+(-2\log \rho (x))^\ell \frac{f(x)}{\rho (x)^{2s}}\right),\quad x\in \mathbb R^d.
\end{align*}
Here $J_{-1}$ is understood as $0$. Note that the above estimations are also valid in the case of $s=0$, so we can apply the dominated convergence theorem to conclude that
\begin{align}\label{derivLs}
\lim_{s\rightarrow 0^+}\partial _s^k\mathcal L_V^sf(x)&=\sum_{\ell =0}^k(-1)^\ell \binom{k}{\ell }\partial_s^{k-\ell }\left(\frac{1}{\Gamma (1-s)}\right)\Big|_{s=0}\nonumber\\
&\quad \times \left(\ell \sum_{j=1}^3J_j^{(\ell -1)}(x,0)+(2\log \rho(x))^\ell f(x)\right),\quad x\in \mathbb R^d,
\end{align}
where for $r\in \mathbb N$, and each $j=1,2, 3$, $J_j^{(r)}(x,0)$, $x\in \mathbb R^d$, denotes the integral given in \eqref{sumJ} for $s=0$. 
\end{proof}

\begin{prop}\label{Prop2.6}
Let $\theta \in (0,1)$, $\varepsilon \in (0,\frac{\theta}{2})$,  and $k\in \mathbb N$. For every $f\in {\textup{Lip}}^\theta_{\rm loc}(\mathbb R^d)\cap L^1_{-\varepsilon }(\mathbb R^d)$, 
\[
\lim_{s\rightarrow 0^+}\partial_s^k\mathcal L_V^{-s}f(x)=(-1)^k\lim_{s\rightarrow 0^+}\partial _s^k\mathcal L_V^sf(x),\quad x\in \mathbb R^d.
\]
\end{prop}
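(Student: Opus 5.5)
We follow the scheme of the proof of Proposition \ref{propoY}, now applied to the decomposition \eqref{sumR} of the negative power. By Remark \ref{rem}, for $0<s<\min\{\tfrac{\varepsilon}{2},\tfrac14\}$ we can write
\[
\mathcal L_V^{-s}f(x)=\frac{1}{\Gamma(s)}\sum_{j=1}^4R_j(x,s)=\frac{1}{\Gamma(1+s)}\Big(s\sum_{j=1}^3R_j(x,s)+f(x)\rho(x)^{2s}\Big),\quad x\in\mathbb R^d .
\]
Since $\varepsilon<\theta/2$ and $f\in L^1_{-\varepsilon}(\mathbb R^d)\subset L^1_0(\mathbb R^d)$, the positive power $\mathcal L_V^sf$ is also defined and admits the analogous representation
\[
\mathcal L_V^sf(x)=\frac{1}{\Gamma(1-s)}\Big(f(x)\rho(x)^{-2s}-s\sum_{j=1}^3I_j(x,s)\Big).
\]
Comparing the two, $R_j(x,s)$ is exactly $I_j(x,-s)$, that is, the same integral with $t^{-s-1}$ replaced by $t^{s-1}$. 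Hence both expressions are the restrictions to $s>0$ and $s<0$ of a single function
\[
G(x,\sigma):=\frac{1}{\Gamma(1-\sigma)}\Big(f(x)\rho(x)^{-2\sigma}-\sigma\sum_{j=1}^3\mathcal I_j(x,\sigma)\Big),
\]
where $\mathcal I_j(x,\sigma)$ denotes the $I_j$-integral with kernel weight $t^{-\sigma-1}$. In this notation $\mathcal L_V^sf(x)=G(x,s)$ and $\mathcal L_V^{-s}f(x)=G(x,-s)$.

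The key step is to show that $\sigma\mapsto G(x,\sigma)$ is $C^\infty$ on an open interval $(-\sigma_0,\sigma_0)$ containing $0$. Granting this, the chain rule gives $\partial_s^k\mathcal L_V^{-s}f(x)=(-1)^k(\partial_\sigma^kG)(x,-s)$, and letting $s\to0^+$ on both sides yields the common value $(\partial_\sigma^k G)(x,0)$, up to the factor $(-1)^k$. That is precisely the claim.

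To prove the smoothness, we differentiate under the integral sign. The $r$-th derivative of $\mathcal I_j$ is $(-1)^r$ times the integral with the extra factor $(\log t)^r$, i.e. the $J_j^{(r)}(x,\sigma)$ of \eqref{sumJ}. The task is then to dominate these integrals uniformly in $|\sigma|\le\sigma_1<\sigma_0$.

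For $J_1^{(r)}$ near $t=0$, we use the Lipschitz bound together with the factor $t^{-1+\eta}|\log t|^r$, exactly as in the proof of Proposition \ref{propoY}; this works for $|\sigma|<\theta/2-\eta$. For $J_1^{(r)}$ at $t\to\infty$, we use \eqref{acotTVimpr} with $N=1$, which supplies an extra $t^{-1/2}$ and makes $\int_1^\infty|\log t|^r t^{-3/2+|\sigma|}\,dt$ finite. The term with $|x-y|>1$ is controlled by
\[
\int_{|x-y|>1}|f(y)|\,|x-y|^{-d-2\sigma}\,dy ,
\]
which is finite for $\sigma>-\varepsilon/2$ thanks to $f\in L^1_{-\varepsilon}$. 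The remaining terms are handled by \eqref{TtVTt}: $J_2^{(r)}$ needs $|\sigma|<\delta/2$, and $J_3^{(r)}$ needs $|\sigma|<1/2$.

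The main obstacle is making these bounds uniform across $\sigma=0$, so that differentiation under the integral sign and continuity of $\partial_\sigma^kG$ at $0$ are justified from both sides. Once dominating functions independent of $\sigma$ in a small symmetric interval are in hand, dominated convergence gives $G(x,\cdot)\in C^\infty(-\sigma_1,\sigma_1)$. The conclusion then follows by the chain rule as described above.
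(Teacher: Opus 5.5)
Your proof is correct, and it is organized differently from the paper's. The paper stays one-sided throughout. It applies Leibniz's rule to $\frac{1}{\Gamma(1+s)}\bigl(s\sum_{j=1}^3R_j(x,s)+f(x)\rho(x)^{2s}\bigr)$ and bounds the log-weighted integrals $H_j^{(r)}(x,s)$ for small $s>0$. It then lets $s\to0^+$ by dominated convergence to get an explicit finite sum. Finally it matches that sum term by term with \eqref{derivLs}, using $H_j^{(r)}(\cdot,0)=J_j^{(r)}(\cdot,0)$ and $\partial_s^r\bigl(1/\Gamma(1+s)\bigr)|_{s=0}=(-1)^r\partial_s^r\bigl(1/\Gamma(1-s)\bigr)|_{s=0}$.

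You observe instead that $R_j(x,s)=I_j(x,-s)$ for $j=1,\dots,4$, so $\mathcal L_V^{\pm s}f(x)$ are the two one-sided restrictions of a single function $G(x,\sigma)$. This turns both identities into one application of the chain rule and makes the factor $(-1)^k$ transparent. The analytic work is the same. The paper's bounds for $J_j^{(r)}(x,s)$ and $H_j^{(r)}(x,s)$, $s>0$, are exactly what you need for $\sigma>0$ and $\sigma<0$ respectively. A $\sigma$-independent majorant on $[-\sigma_1,\sigma_1]$ comes from replacing $t^{-\sigma-1}$ by $t^{-\sigma_1-1}+t^{\sigma_1-1}$, i.e. from checking absolute convergence at the two endpoints. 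What the paper's route gives in addition is the explicit expansion with the coefficients $a_{k,\ell}$, which is reused in the proofs of Theorems~\ref{Th1.2} and~\ref{Th1.3}. In your setting that expansion is recovered by one Leibniz computation of $\partial_\sigma^kG(x,0)$.

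Two minor precisions. First, neither the inclusion $L^1_{-\varepsilon}\subset L^1_0$ nor your estimates (with a separate auxiliary exponent $\eta$) use $\varepsilon<\theta/2$. Second, in the far-field part of $J_1^{(r)}$ the factor $(\log t)^r$ turns the kernel $|x-y|^{-d-2\sigma}$ into $|x-y|^{-d-2\sigma}(1+\log|x-y|)^r$. This is harmless because you only need $\sigma\geq-\sigma_1>-\varepsilon/2$. Alternatively, you can use the extra factor $\rho(x)t^{-1/2}$ from \eqref{acotTVimpr} for $t\geq1$.
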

\begin{proof}
Let $f\in {\textup{Lip}}_{\rm loc}^\theta (\mathbb R^d)\cap L^1_{-\varepsilon}(\mathbb R^d)$. According to \eqref{sumR} we can write
\begin{align*}
   \partial _s^k\mathcal L_V^{-s}f(x)&=\sum_{j=1}^3\partial_s^k\left(\frac{sR_j(x,s)}{\Gamma (1+s)}\right)+\partial_s^k\left(\frac{f(x)\rho(x)^{2s}}{\Gamma (1+s)}\right)\\
   &=\sum_{\ell =0}^k\binom{k}{\ell }\partial _s^{k-\ell}\left(\frac{1}{\Gamma (1+s)}\right)\\
    &\quad \times \left(s\sum_{j=1}^3\partial _s^\ell R_j(x,s)+\ell \sum_{j=1}^3 \partial _s^{\ell -1}R_j(x,s)+\rho (x)^{2s}(2\log \rho (x))^\ell f(x)\right),
\end{align*}
for each $x\in \mathbb R^d$ and $s\in (0,\min\{\tfrac{\varepsilon}{2},1/4\})$.

Let $r\in \mathbb N\cup\{0\}$. We claim that, for each $x\in \mathbb R^d$ and $s\in (0,\min\{\frac{\varepsilon}{2}, \frac{1}{4}\})$,
\begin{align}\label{sumH}
    \sum_{j=1}^3\partial _s^rR_j(x,s)&=\left(\int_0^\infty \int_{\mathbb R^d}(f(y)-f\mathcal X_{B(x,1)}(y))T_t^V(x,y)t^{s-1}(\log t)^rdydt\right.\nonumber\\
&\quad +f(x)\int_0^{\rho (x)^2}\int_{\mathbb R^d}(T_t^V(x,y)-T_t(x-y))t^{s-1}(\log t)^rdydt\nonumber\\
&\quad \left.+f(x)\left(\int_{\rho (x)^2}^\infty\int_{B(x,1)}-\int_0^{\rho (x)^2}\int_{\mathbb R^d\setminus B(x,1)} \right)T_t^V(x,y)t^{s-1}(\log t)^rdydt\right)\nonumber\\
&:=\sum_{j=1}^3H_j^{(r)}(x,s).
\end{align}
Indeed, by proceeding as in the proof of Proposition \ref{propoY} we obtain
\begin{align*}
|H_1^{(r)}(x,s)|&\\
&\hspace{-1.5cm}\leq C\left(\int_0^1\frac{|\log t|^r}{t^{1-\varepsilon}}dt\left(\int_{|x-y|\leq 1}|x-y|^{\theta -d-2\varepsilon}dy+\int_{|x-y|>1}\frac{|f(y)|}{|x-y|^{d+\varepsilon}}dy\right)\right.\\
 &\hspace{-1.5cm}\left.\quad +\rho (x)\int_1^\infty \frac{(\log t)^r}{t^{5/4}}dt\left(\int_{|x-y|\leq 1}|x-y|^{\theta -d}dy+\int_{|x-y|>1}\frac{|f(y)|}{|x-y|^d}dy\right)\right)\\
 &\hspace{-1.5cm}\leq C(1+\rho (x))(1+|x|)^d,\quad x\in \mathbb R^d,\;s\in \left(0,\tfrac{1}{4}\right).
\end{align*}
We can also see that
\[
|H_2^{(r)}(x,s)|\leq C|f(x)|(1+\rho (x)^2)\rho (x)^{-\delta}\int_0^{\rho (x)^2}\frac{|\log t|}{t^{1-\delta/2}}dt<\infty,\quad x\in \mathbb R^d,\;s\in (0,1),
\]
and, for each $x\in \mathbb R^d$ and $s\in (0,\tfrac{1}{2})$,
\begin{align*}
|H_3^{(r)}(x,s)|&\leq C|f(x)|\rho (x)\left(\int_{\rho (x)^2}^\infty \frac{|\log t|^r}{t^{d/2-s+3/2}}dt\right.\\
&\left.\quad +\int_0^{\rho (x)^2}\frac{t^s|\log t|^r}{\sqrt{t}}dt\int_{|x-y|\geq 1}\frac{dy}{|x-y|^{d+2}}\right)\\
&\leq C|f(x)|\rho (x)\left((1+\rho (x)^2)\int_0^{\rho(x)^2}\frac{|\log t|^r}{\sqrt{t}}dt\right.\\
&\left.\quad+\left(\frac{1}{\rho (x)^d}+\frac{1}{\rho (x)^{d-1}}\right)\int_{\rho(x)^2}^\infty\frac{|\log t|^r}{t^{3/2}}dt\right)<\infty .
\end{align*}
Applying the dominated convergence theorem we conclude that
\begin{align*}
\lim_{s\rightarrow 0^+}\partial_s^k\mathcal L_V^{-s}f(x)&=\sum_{\ell =0}^k\binom{k}{\ell}\partial _s^{k-\ell}\left(\frac{1}{\Gamma (1+s)}\right)\Big|_{s=0}\\
&\quad \times \left(\sum_{j=1}^3\ell H_j^{(\ell -1)}(x,0)+f(x)(2\log \rho (x))^\ell \right),\quad x\in \mathbb R^d,
\end{align*}
where for $r\in \mathbb N\cup\{0\}$, $H_j^{(r)}(\cdot, 0)$, $j=1,2,3$, represents each term in \eqref{sumH} with $s=0$. Note that $H_j^{(r)}(\cdot, 0)=J_j^{(r)}(\cdot, 0)$, $j=1,2,3$, (see \eqref{sumJ}). Since, for each $r\in \mathbb N\cup\{0\}$, 
\[
\partial_s^r\left(\frac{1}{\Gamma (1+s)}\right)\Big|_{s=0}=(-1)^r\partial _s^r\left(\frac{1}{\Gamma (1-s)}\right)\Big|_{s=0},
\]
from \eqref{derivLs} we can finish the proof.
\end{proof}

\section{Proof of Theorem \ref{Th1.1}}
Let us show \ref{itm: Th1.1-a}. The property \ref{itm: Th1.1-b} can be proved in a similar way.

Assume that $f\in D(\log ^{m+1}\mathcal L_V)\cap D(\mathcal L_V^{s_0})$. Then, $f\in D(\mathcal L_V^s)$, for every $s\in (0,s_0)$. Let $k\in \mathbb N\cup\{0\}$, $k\leq m+1$. We have that
\[
\int_0^\infty |\log \lambda |^{2k}d\mu_{f,f}(\lambda )\leq \int_{e^{-1}}^ed\mu _{f,f}(\lambda) +\left(\int_0^{e^{-1}}+\int_e^\infty \right)|\log \lambda |^{2(m+1)}d\mu_{f,f}(\lambda )<\infty.
\]
Hence, $f\in D(\log ^k\mathcal L_V)$. We can write
\[
\mathcal L_V^sf-\sum_{k=0}^m\frac{s^k}{k!}(\log^k\mathcal L_V)f=\int_0^\infty \Big(\lambda ^s-\sum_{k=0}^m\frac{s^k}{k!}(\log \lambda )^k\Big)dE_V(\lambda)f.
\]
We define the function
\[
F(s,\lambda )=\lambda ^s-\sum_{k=0}^m\frac{s^k}{k!}(\log \lambda )^k,\quad \lambda \in (0,\infty )\mbox{ and }s\in (0,1).
\]
Using the mean value theorem we deduce that
\[
|F(s,\lambda )|\leq \frac{\lambda ^u|\log \lambda |^{m+1}}{(m+1)!}s^{m+1},\quad \lambda \in (0,\infty )\mbox{ and }s\in (0,1),
\]
for certain $u\in (0,s)$. Then,
\[
|F(s,\lambda )|\leq \frac{(1+\lambda ^{\frac{s_0}{2}})|\log \lambda |^{m+1}}{(m+1)!}s^{m+1},\quad \lambda \in (0,\infty )\mbox{ and }s\in \left(0,\tfrac{s_0}{2}\right).
\]
We have that
\begin{align*}
\int_0^\infty \big((1+\lambda ^{\frac{s_0}{2}})|\log \lambda |^{m+1}\big)^2d\mu_{f,f}(\lambda )&\leq C\left(\int_{e^{-1}}^ed\mu _{f,f}(\lambda) \right.\\
&\hspace{-4cm}\quad \left.+\int_0^{e^{-1}}|\log \lambda |^{2(m+1)}d\mu _{f,f}(\lambda)+\int_e^\infty\lambda ^{2s_0}d\mu_{f,f}(\lambda )\right)<\infty.
\end{align*}
It follows that
\[
\int_0^\infty |F(s,\lambda )|^2d\mu_{f,f}(\lambda )\leq Cs^{2(m+1)},\quad s\in \big (0,\tfrac{s_0}{2}\big).
\]
Thus, we establish that
\[
\lim_{s\rightarrow 0^+}\frac{\left\|\mathcal L_V^s f-\sum_{k=0}^m\frac{s^k}{k!}(\log ^k\mathcal L_V)f\right\|_{L^2(\mathbb R^d)}}{s^m}=0,
\]
and the proof is finished.

\section{Proof of Theorem \ref{Th1.2}}
We first present the following technical lemma, which will be useful in the proof of our result.
 \begin{lem}\label{techn}
Let $a>0$ and denote by $g_a$ and $h_a$ the functions given by
\[
g_a(s)=\frac{sa^{-s-1}}{\Gamma (1-s)},\quad \mbox{ and }\quad h_a(s)=\frac{a^{-2s}}{\Gamma (1-s)}-1, \quad s\in (-1,1).
\]
Then, for every $n\in \mathbb N$ and $\varepsilon >0$, there exists $C>0$ such that, for each $s\in (-1,1)$,
\[
\Big|g_a(s)-\sum_{k=1}^n\frac{d^kg_a}{ds^k}(0)\frac{s^k}{k!}\Big|\leq C\frac{|s|^{n+1}}{a^{\max\{s,0\}}}\Big(a^{-\varepsilon -1}\mathcal X_{(0,1]}(a)+a^{|s|+\varepsilon -1}\mathcal X_{(1,\infty)}(a)\Big),
\]
and 
\[
 \Big|h_a(s)-\sum_{k=1}^n\frac{d^kh_a}{ds^k}(0)\frac{s^k}{k!}\Big|\leq C\frac{|s|^{n+1}}{a^{\max\{2s,0\}}}\Big(a^{-\varepsilon}\mathcal X_{(0,1]}(a)+a^{|2s|+\varepsilon}\mathcal X_{(1,\infty)}(a)\Big).
\]
\end{lem}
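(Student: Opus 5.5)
The plan is to treat both inequalities as remainder estimates for a Taylor expansion in $s$ around $0$. The key is to bound the $(n+1)$-th derivative along the segment $[0,s]$ with an explicit dependence on $a$. Write $g_a(s)=s\,a^{-1}e^{-s\log a}\,\Phi(s)$ and $h_a(s)=e^{-2s\log a}\Phi(s)-1$, where $\Phi(s)=1/\Gamma(1-s)$. The function $\Phi$ is entire, so on $[-1,1]$ all its derivatives up to order $n+1$ are bounded by a constant $C_n$. Since $g_a(0)=0$ and $h_a(0)=\Phi(0)-1=0$, the sums starting at $k=1$ are the full Taylor polynomials of degree $n$. Taylor's theorem with Lagrange remainder then gives, for some $u$ between $0$ and $s$,
\[
\Big|g_a(s)-\sum_{k=1}^n\frac{g_a^{(k)}(0)}{k!}s^k\Big|\le \frac{|s|^{n+1}}{(n+1)!}\,|g_a^{(n+1)}(u)|,
\]
and the analogous bound holds for $h_a$.

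The derivatives are computed by the Leibniz rule. For $h_a$,
\[
h_a^{(n+1)}(u)=\sum_{j}\binom{n+1}{j}(-2\log a)^j a^{-2u}\Phi^{(n+1-j)}(u),
\]
so $|h_a^{(n+1)}(u)|\le C a^{-2u}(1+|\log a|)^{n+1}$. For $g_a$, the extra factor $s$ contributes one further term, and we get $|g_a^{(n+1)}(u)|\le C a^{-1}a^{-u}(1+|\log a|)^{n+1}$. We use the factor $|s|^{n+1}$ exactly as it appears; there is no need to gain an extra power of $s$.

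It remains to control $a^{-u}$ uniformly for $u$ between $0$ and $s$, and to absorb the logarithm into $a^{\pm\varepsilon}$. The elementary bound $(1+|\log a|)^{n+1}\le C_\varepsilon(a^{-\varepsilon}\mathcal X_{(0,1]}(a)+a^{\varepsilon}\mathcal X_{(1,\infty)}(a))$ handles the logarithm. For the power, split into the cases $a\le1$ and $a>1$. If $a\le 1$, then $a^{-u}\le a^{-\max\{s,0\}}$, since $a^{-u}\le1$ when $u\le0$ and $a^{-u}\le a^{-s}$ when $0\le u\le s$. This gives $a^{-\max\{s,0\}}a^{-1-\varepsilon}$ for $g_a$, and $a^{-\max\{2s,0\}}a^{-\varepsilon}$ for $h_a$. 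If $a>1$, then $a^{-u}\le a^{|s|}$ holds trivially, and we write $a^{|s|}=a^{-\max\{s,0\}}\cdot a^{|s|+\max\{s,0\}}$. The resulting exponent must then be matched with the stated one, $a^{-\max\{s,0\}}a^{|s|+\varepsilon-1}$. When $s\le0$ the two agree, because $a^{-u}\le a^{|s|}$ and $\max\{s,0\}=0$. When $s>0$ we have $a^{-u}\le1\le a^{-s}a^{2s}=a^{-s}a^{|s|+s}$. Here we need the sharper observation that $a^{-u}\le 1$ for $u\ge0$ and $a>1$, and $1\le a^{-s}a^{|s|}$ because $-s+|s|=0$. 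So in both subcases $a^{-u}\le a^{-\max\{s,0\}}a^{|s|}$. The same reasoning with $2s$ gives $a^{-2u}\le a^{-\max\{2s,0\}}a^{|2s|}$.

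Combining these estimates yields both inequalities, with $C$ depending only on $n$, $\varepsilon$ and the bounds for $\Phi^{(j)}$ on $[-1,1]$. The only delicate point is this bookkeeping of $a^{-u}$ against the factor $a^{-\max\{s,0\}}a^{\pm|s|}$. I expect it to be routine once the two cases above are separated.
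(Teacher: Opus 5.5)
Your proposal is correct and follows essentially the same route as the paper: Taylor's formula with Lagrange remainder, a Leibniz-rule bound $|g_a^{(n+1)}(u)|\le Ca^{-u-1}(1+|\log a|^{n+1})$ (resp.\ $Ca^{-2u}(1+|\log a|^{n+1})$ for $h_a$) using smoothness of $1/\Gamma(1-s)$ on $[-1,1]$, absorption of the logarithm into $a^{\pm\varepsilon}$, and a case analysis of $\sup a^{-u}$ over $u$ between $0$ and $s$. The only differences are cosmetic: you split first on $a\le 1$ versus $a>1$ where the paper splits first on the sign of $s$, and your detour through $a^{-s}a^{2s}$ when $s>0$, $a>1$ is unnecessary, since there $a^{-u}\le 1=a^{-\max\{s,0\}}a^{|s|}$ directly.
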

\begin{proof}
Fix $n\in \mathbb N$. First, observe that
\begin{align}\label{derivg}
    \frac{d^{n+1}g_a}{du^{n+1}}(u)&=u\frac{d^{n+1}}{du^{n+1}}\left(\frac{a^{-u-1}}{\Gamma (1-u)}\right)+(n+1)\frac{d^n}{du^n}\left(\frac{a^{-u-1}}{\Gamma (1-u)}\right)\nonumber\\
    &=u\sum_{\ell =0}^{n+1}(-1)^\ell\binom{n+1}{\ell }\frac{d^{n+1-\ell}}{du^{n+1-\ell}}\left(\frac{a^{-u-1}}{\Gamma (1-u)}\right)\frac{(\log a)^\ell}{a^{u+1}}\\
    &\quad +(n+1)\sum_{\ell =0}^n(-1)^\ell\binom{n}{\ell}\frac{d^{n-\ell}}{du^{n-\ell}}\left(\frac{a^{-u-1}}{\Gamma (1-u)}\right)\frac{(\log a)^\ell}{a^{u+1}},\quad u\in (-1,1).\nonumber
\end{align}
Then, 
\[
\left|\frac{d^{n+1}g_a}{du^{n+1}}(u)\right|\leq C\frac{|u+1|}{a^{u+1}}\sum_{\ell =0}^{n+1}|\log a|^\ell\leq \frac{C}{a^{u+1}}(1+|\log a|^{n+1}),\quad u\in (-1,1).
\]

Let $\varepsilon >0$. Assume first that $s\in (0,1)$. The differential mean value theorem leads to
\begin{align*}
\left|g_a(s)-\sum_{k=1}^n\frac{d^kg_a}{ds^k}(0)\frac{s^k}{k!}\right|&\leq Cs^{n+1}\sup_{0\leq u\leq s}\left|\frac{d^{n+1}g_a(u)}{du^{n+1}}\right|\\
&\leq Cs^{n+1}(1+|\log a|^{n+1})\sup_{0\leq u\leq s}\frac{1}{a^{u+1}}\\
&\leq Cs^{n+1}\left(a^{-s-\varepsilon-1}\mathcal X_{(0,1]}(a)+a^{\varepsilon -1}\mathcal X_{(1,\infty )}(a)\right).
\end{align*}
In the case that $s\in (-1,0)$, we obtain that
\begin{align*}
\left|g_a(s)-\sum_{k=1}^n\frac{d^kg_a}{ds^k}(0)\frac{s^k}{k!}\right|&\leq Cs^{n+1}(1+|\log a|^{n+1})\sup_{s\leq u\leq 0}\frac{1}{a^{u+1}}\\
&\leq Cs^{n+1}(a^{-\varepsilon-1}\mathcal X_{(0,1]}(a)+a^{-s+\varepsilon -1}\mathcal X_{(1,\infty )}(a)).
\end{align*}
Thus, the estimation for $g_a$ is established. By proceeding in a similar way we get
\begin{align}\label{derivh}
\left|\frac{d^{n+1}h_a(u)}{du^{n+1}}\right|&=\left|\sum_{\ell =0}^{n+1}\binom{n+1}{\ell}\frac{d^{n+1-\ell}}{du^{n+1-\ell}}\left(\frac{a^{-u-1}}{\Gamma (1-u)}\right)a^{-2u}(-2\log a)^\ell\right|\\
&\leq \frac{C}{a^{2u}}(1+|\log a|^{n+1}),\quad u\in (-1,1),\nonumber
\end{align}
from which, as before, we deduce the property for $h_a$.
\end{proof}

We now proceed with the proof of Theorem \ref{Th1.2}. Let $n\in \mathbb N$. Suppose $f\in L^1(\mathbb R^d)\cap {\textup{Lip}}^\theta _{{\rm loc}, {\rm uni}}(\mathbb R^d)$. We define $\Omega_f(x,s)$, for each $x\in \mathbb R^d$ and $s\in (0,\min\left\{\tfrac{\theta}{2},\tfrac{\delta}{2}\right\})$, by
\[
\Omega _f(x,s):=\frac{1}{s^n}\left(\mathcal L_V^s(f)(x)-f(x)-\sum_{k=1}^n\frac{s^k}{k!}(\mathbb{L}\textup{og}^k\mathcal L_V)f(x)\right).
\]
Here $\delta=2-d/q$. Our objective is to show that $\lim_{s\rightarrow 0^+}\Omega_f(\cdot,s)=0$, in $L^p(\mathbb R^d)$, $1<p<\infty$, when $f\in C_c(\mathbb R^d)\cap {\textup{Lip}}_{\textup{loc},\textup{uni}}^\theta (\mathbb R^d)$, and in $L^\infty (\mathbb K)$, for every compact $\mathbb K\subset \mathbb R^d$, provided that $f\in L^1(\mathbb R^d)\cap L^\infty (\mathbb R^d)\cap {\textup{Lip}}_{\textup{loc},\textup{uni}}^\theta (\mathbb R^d)$.

From Remark~\ref{rem}, when $s\in (0,\min\left\{\tfrac{\theta}{2},\tfrac{\delta}{2}\right\})$, 
\[
\mathcal L_V^s(f)(x)-f(x)=-\frac{s}{\Gamma (1-s)}\sum_{j=1}^3I_j(x,s)+f(x)\left(\frac{\rho (x)^{-2s}}{\Gamma (1-s)}-1\right),\quad x\in \mathbb R^d,
\]
being $I_j$, $j=1,2,3$, the integrals given in \eqref{sumI}. On the other hand, according to \eqref{derivLs} if we set 
\begin{equation}\label{akl}
a_{k,\ell}=(-1)^\ell \binom{k}{\ell}\partial _s^{k-\ell}\left(\frac{1}{\Gamma (1-s)}\right)\Big|_{s=0},\quad k=1,\ldots, n,\;\ell =0,\ldots, k,
\end{equation}
we can write, for $x\in \mathbb R^d$,
\[(\mathbb{L}\textup{og}^k\mathcal L_V)f(x)=\sum_{\ell =0}^ka_{k,\ell}\left(\ell \sum_{j=1}^3J_j^{(\ell -1)}(x,0)+(2\log \rho (x))^\ell f(x)\right),\]
where, for every $r\in \mathbb N\cup\{0\}$, $J_j^{(r)}(\cdot,0)$, $j=1,2,3$, are the summands in \eqref{sumJ} when $s=0$ ($J^{-1}$ is understood as $0$). 
Then, for $x\in \mathbb R^d$ and $s\in (0,1)$,
\[
\sum_{k=1}^n\frac{s^k}{k!}(\mathbb{L}\textup{og}^k\mathcal L_V)f(x)=\sum_{k=1}^n\frac{s^k}{k!}\sum_{\ell =0}^ka_{k,\ell}\left(\ell \sum_{j=1}^3J_j^{(\ell -1)}(x,0)+(2\log \rho (x))^\ell f(x)\right).
\]
We decompose $\Omega_f(x,s)=\sum_{j=1}^4\Omega_{f,j}(x,s)$, $x\in \mathbb R^d$ and $s\in (0,\min\left\{\tfrac{\theta}{2},\tfrac{\delta}{2}\right\})$, where, for $j=1,2,3$, 
\[
\Omega_{f,j}(x,s)=\frac{1}{s^n}\left(-\frac{s}{\Gamma (1-s)}I_j(x,s)-\sum_{k=1}^n\frac{s^k}{k!}\sum_{\ell =1}^k\ell a_{k,\ell}J_j^{(\ell -1)}(x,0)\right),
\]
and
\[
\Omega_{f,4}(x,s)=\frac{f(x)}{s^n}\left(\frac{\rho (x)^{-2s}}{\Gamma (1-s)}-1-\sum_{k=1}^n\frac{s^k}{k!}\sum_{\ell =0}
^ka_{k,\ell}(2\log \rho (x))^\ell \right).
\]
Consider the functions $G$ and $H$ defined by
\begin{equation}\label{functionG}
G(s,t)=\frac{s}{\Gamma (1-s)t^{s+1}}+\sum_{k=1}^n\frac{s^k}{k!}\sum_{\ell =1}^k\ell a_{k,\ell}\frac{(\log t)^{\ell -1}}{t},\quad s\in (-1,1),\,t>0, 
\end{equation}
and
\begin{equation}\label{functionH}
H(s,x)=\frac{\rho (x)^{-2s}}{\Gamma (1-s)}-1-\sum_{k=1}^n\frac{s^k}{k!}\sum_{\ell =0}^ka_{k,\ell}(2\log \rho (x))^\ell,\quad s\in (-1,1),\,x\in \mathbb R^d.
\end{equation}
Observe that, if $g_t$, $t>0$, and $h_{\rho (x)}$, $x\in \mathbb R^d$, denote the functions defined in Lemma \ref{techn}, then, by virtue of \eqref{derivg} and \eqref{derivh} it follows that, for each $t>0$, $x\in \mathbb R^d$, and $k\in \mathbb N$,  
\[
\frac{d^kg_t}{ds^k}(0)=-\sum_{\ell =1}^k\ell a_{k,\ell }\frac{(\log t)^{\ell -1}}{t}, \mbox{ and }\quad
\frac{d^kh_{\rho (x)}}{ds^k}(0)=\sum_{\ell =0}^ka_{k,\ell }(2\log \rho (x))^\ell.
\]
Thus,
\[
G(s,t)=g_t(s)-\sum_{k=1}^n\frac{d^kg_t}{ds^k}(0)\frac{s^k}{k!},\quad s\in (-1,1),\,t>0,
\]
and
\[
H(s,x)=h_{\rho (x)}(s)-\sum_{k=1}^n\frac{d^kh_{\rho (x)}}{ds^k}(0)\frac{s^k}{k!},\quad s\in (-1,1),\,x\in \mathbb R^d.
\]
Note also that, for every $x\in \mathbb R^d$ and $s\in (0,\min\left\{\tfrac{\theta}{2},\tfrac{\delta}{2}\right\})$,
\begin{align*}
    &\Omega_{f,1}(x,s)=-\frac{1}{s^n}\int_0^\infty\int_{\mathbb R^d}\big(f(y)-f(x)\mathcal X_{B(x,1)}(y)\big)T_t^V(x,y)dyG(s,t)dt,\\
    &\Omega_{f,2}(x,s)=-\frac{f(x)}{s^n}\int_0^{\rho (x)^2}\int_{\mathbb R^d}(T_t^V(x,y)-T_t(x-y))dyG(s,t)dt,\\
    &\Omega_{f,3}(x,s)=-\frac{f(x)}{s^n}\left(\int_{\rho (x)^2}^\infty\int_{B(x,1)}-\int_0^{\rho (x)^2}\int_{\mathbb R^d\setminus B(x,1)} \right)T_t^V(x,y)dyG(s,t)dt,
\end{align*}
and
\[
\Omega_{f,4}(x,s)=\frac{f(x)}{s^n}H(s,x).
\]
Using Lemma \ref{techn} and \eqref{1.6} it follows that, for every $\varepsilon \in (0,\tfrac{\theta}{2})$, $s\in (0,\theta /4)$ and $x\in \mathbb R^d$, 
\begin{align}\label{Omega1a}
 |\Omega_{f,1}(x,s)|&\leq Cs\int_{\mathbb R^d} |f(y)-f(x)\mathcal X_{B(x,1)}(y)|\nonumber\\
 &\quad \times \left(\int_0^1\frac{e^{-\frac{|x-y|^2}{4t}}}{t^{d/2+s+1+\tfrac{\varepsilon}{2}}}dt+\int_1^\infty \frac{e^{-\frac{|x-y|^2}{t}}}{t^{d/2+1-\tfrac{\varepsilon}{2}}}dt\right)dy\nonumber\\
 &\leq Cs\left(\int_{B(x,1)}|f(y)-f(x)|\Big(\frac{1}{|x-y|^{d+2s+\varepsilon}}+1\Big)dy\right.\nonumber\\
 &\quad \left.+\int_{\mathbb R^d\setminus B(x,1)}|f(y)|\Big(\frac{1}{|x-y|^{d+2s+\varepsilon}}+\frac{1}{|x-y|^{d-\varepsilon}}\Big)dy\right)\nonumber\\
 &\leq Cs\left(\|f\|_{{\textup{Lip}}^\theta_{\rm loc,uni}(\mathbb R^d)}\int_{B(x,1)}|x-y|^{\theta-d-2s-\varepsilon}dy+\|f\|_{L^1(\mathbb R^d)}\right)\nonumber\\
 &=Cs\left(\|f\|_{{\textup{Lip}}^\theta_{\rm loc,uni}(\mathbb R^d)}\int_0^1r^{\theta-2s-\varepsilon-1}dy+\|f\|_{L^1(\mathbb R^d)}\right)\nonumber\\
 &\leq Cs\big(\|f\|_{{\textup{Lip}}^\theta_{\rm loc,uni}(\mathbb R^d)})+\|f\|_{L^1(\mathbb R^d)}\big).
\end{align}
Moreover, observe that, if $f\in C_c(\mathbb R^d)\cap {\textup{Lip}}^\theta_{\rm loc, uni}(\mathbb R^d)$, and $\supp f\subset B(0,R)$, with $R>1$, we can deduce that, for every $\varepsilon >0$, $|x|>2R$, and $s\in (0,1)$, 
\begin{align}\label{Omega1b}
 |\Omega_{f,1}(x,s)|&\leq Cs\int_{\mathbb B(0,R)} |f(y)|\left(\int_0^1\frac{e^{-\frac{|x-y|^2}{4t}}}{t^{d/2+s+1+\tfrac{\varepsilon}{2}}}dt+\int_1^\infty \frac{e^{-\frac{|x-y|^2}{4t}}}{t^{d/2+1-\tfrac{\varepsilon}{2}}}dt\right)dy\nonumber\\
 &\leq Cs\|f\|_{L^\infty (\mathbb R^d)}\int_{B(0,R)}\Big(\frac{1}{|x-y|^{d+2s+\varepsilon}}+\frac{1}{|x-y|^{d-\varepsilon}}\Big)dy\nonumber\\
  &\leq CR^ds\|f\|_{L^\infty (\mathbb R^d)}\Big(\frac{1}{|x|^{d+2s+\varepsilon}}+\frac{1}{|x|^{d-\varepsilon}}\Big)\leq C\frac{s\|f\|_{L^\infty (\mathbb R^d)}}{|x|^{d-\varepsilon}}.
\end{align}
According to \eqref{TtVTt} we can write, for each $x\in \mathbb R^d$ and $s\in (0,\min\left\{\tfrac{\theta}{2},\tfrac{\delta}{2}\right\})$,
\[
|\Omega_{f,2}(x,s)|\leq C\frac{|f(x)|}{s^n\rho (x)^\delta}\int_0^{\rho (x)^2}\int_{\mathbb R^d}t^{\tfrac{\delta}{2}}\left|\phi _t\left(\frac{x-y}{\sqrt{t}}\right)\right||G(s,t)|dydt.
\]
Assume that $x\in \mathbb R^d$ and $\rho (x)\leq 1$. By Lemma \ref{techn} we get, for every $\varepsilon \in (0,\delta)$, 
\begin{align}\label{Omega2a}
 |\Omega_{f,2}(x,s)|&\leq Cs\frac{|f(x)|}{\rho(x)^\delta}\int_0^{\rho(x)^2} \frac{dt}{t^{s+1+\tfrac{\varepsilon}{2}-\tfrac{\delta}{2}}}\leq Cs\frac{|f(x)|}{\rho (x)^\delta }\frac{\rho (x)^{\delta -2s-\varepsilon}}{\tfrac{\delta}{2}-s-\tfrac{\varepsilon}{2}}\nonumber\\
    &\leq Cs\frac{|f(x)|}{\rho (x)^\delta},\quad s\in \left(0,\min\left\{ \tfrac{\theta }{2},\tfrac{\delta-\varepsilon}{4}\right\}\right).
\end{align}
Suppose now that $x\in\mathbb R^d$ and $\rho (x)>1$. Using again Lemma \ref{techn} we get, for each $\varepsilon \in (0,\delta)$, 
\begin{align}\label{Omega2b}
 |\Omega_{f,2}(x,s)|&\leq Cs\frac{|f(x)|}{\rho(x)^\delta}\left(\int_0^1\frac{dt}{t^{s+1+\tfrac{\varepsilon}{2}-\tfrac{\delta}{2}}}+\int_1^{\rho (x)^2} \frac{dt}{t^{1-\tfrac{\varepsilon}{2}-\tfrac{\delta}{2}}}\right)\nonumber\\
    &\leq Cs\frac{|f(x)|}{\rho (x)^\delta}\left(\frac{1}{\delta -\varepsilon}+\rho (x)^{\varepsilon +\delta}\right)\nonumber\\
    &\leq C|f(x)|\rho (x)^\varepsilon,\quad s\in \left(0,\min\left\{ \tfrac{\theta }{2},\tfrac{\delta-\varepsilon}{4}\right\}\right).
\end{align}
By putting together \eqref{Omega2a} and \eqref{Omega2b} we get, for every $\varepsilon \in (0,\delta)$, $x\in \mathbb R^d$ and $s\in (0,\min\{\tfrac{\theta}{2},(\delta-\varepsilon)/4\})$,
\begin{equation}\label{Omega2}
|\Omega_{f,2}(x,s)|\leq Cs\frac{|f(x)|}{\rho (x)^\delta}\Big(\mathcal X_{(0,1]}(\rho (x))+\rho (x)^{\delta+\varepsilon}\mathcal X_{(1,\infty )}(\rho (x)) \Big).
\end{equation}
Let us now analyze the term $\Omega_{f,3}$. Suppose first that $x\in \mathbb R^d$ satisfies $\rho (x)\leq 1$.
From Lemma \ref{techn} and \eqref{1.6} we have that, for every $\varepsilon \in (0,1)$,
\begin{align*}
 &|\Omega_{f,3}(x,s)|\\
 &\leq \frac{|f(x)|}{s^n}\left(\int_{\mathbb R^d\setminus B(x,1)}\int_0^{\rho (x)^2} +\int_{B(x,1)}\left(\int_{\rho (x)^2}^1+\int_1^\infty\right)\right)T_t^V(x,y)|G(s,t)|dtdy\\
    &\leq Cs|f(x)|\left(\int_{\mathbb R^d\setminus B(x,1)}\int_0^{\rho (x)^2}\frac{e^{-\frac{|x-y|^2}{4t}}}{t^{d/2+s+1+\tfrac{\varepsilon}{2}}}dtdy\right.\\
    &\left.+\int_{B(x,1)}\left(\int_{\rho (x)^2}^1\frac{e^{-\frac{|x-y|^2}{4t}}}{t^{d/2+s+1+\tfrac{\varepsilon}{2}}}dt+\int_1^\infty \frac{dt}{t^{d/2+1-\tfrac{\varepsilon}{2}}}\right)dy\right)\\
    &\leq Cs|f(x)|\left(\int_{|x-y|\geq 1}\frac{dy}{|x-y|^{d+2s+2}}\int_0^1\frac{dt}{t^{\varepsilon /2}}+\int_{\rho (x)^2}^\infty\frac{dt}{t^{d/2+s+1+\varepsilon /2}}+1\right)\\
    &\leq Cs|f(x)|\Big(1+\rho (x)^{-(d
    +2s+\varepsilon)}\Big)\leq Cs|f(x)|\Big(1+\rho (x)^{-(d+2+\varepsilon)}\Big)\\
    &\leq Cs|f(x)|\rho (x)^{-(d+2+\varepsilon)}, 
\end{align*}
when $s\in (0,\min\left\{\tfrac{\theta}{2},\tfrac{\delta}{2}\right\})$. 

On the other hand, for $x\in \mathbb R^d$ such that $\rho (x)>1$, using again Lemma \ref{techn} and \eqref{1.6} we obtain for each $\varepsilon \in (0,1)$, 
\begin{align*}
 |\Omega_{f,3}(x,s)|&\\
 &\hspace{-1.5cm} \leq \frac{|f(x)|}{s^n}\left(\int_{\mathbb R^d\setminus B(x,1)}\Big(\int_0^1 +\int_1^{\rho (x)^2}\Big)+\int_{B(x,1)}\int_{\rho (x)^2}^\infty \right)T_t^V(x,y)|G(s,t)|dtdy\\
    &\hspace{-1.5cm}\leq Cs|f(x)|\left(\int_{\mathbb R^d\setminus B(x,1)}\Big(\int_0^1\frac{e^{-\frac{|x-y|^2}{4t}}}{t^{d/2+s+1+\tfrac{\varepsilon}{2}}}dt+\int_1^{\rho (x)^2}\frac{e^{-\frac{|x-y|^2}{4t}}}{t^{d/2+1-\tfrac{\varepsilon}{2}}}dt\Big)dy\right.\\
    &\hspace{-1.5cm}\left.\quad +\int_{B(x,1)}\int_{\rho (x)^2}^\infty \frac{e^{-\frac{|x-y|^2}{4t}}}{t^{d/2+1-\tfrac{\varepsilon}{2}}}dtdy\right)\\
     &\hspace{-1.5cm}\leq Cs|f(x)|\left(\int_{|x-y|\geq 1}\frac{dy}{|x-y|^{d+2s+2}}\int_0^1\frac{dt}{t^{\varepsilon /2}}+\int_{|x-y|\geq 1}\frac{dy}{|x-y|^{d+\varepsilon}}\int_1^{\rho (x)^2}\frac{dt}{t^{1-\varepsilon}}\right.\\
     &\hspace{-1.5cm}\left.\quad +\int_{\rho (x)^2}^\infty \frac{dt}{t^{d/2+1-\varepsilon /2}}\right)\leq Cs|f(x)|\left(1+\rho (x)^{2\varepsilon}+\rho (x)^{-d+\varepsilon}\right)\\
    &\hspace{-1.5cm}\leq Cs|f(x)|\rho (x)^{2\varepsilon}, \quad s\in \left(0,\min\left\{\tfrac{\theta }{2},\tfrac{\delta}{2}\right\}\right).
\end{align*}
Hence, we get that, for every $\varepsilon \in (0,1)$, 
\begin{equation}\label{Omega3}
|\Omega_{f,3}(x,s)|\leq Cs|f(x)|\Big(\rho (x)^{-(d+2+\varepsilon)}\mathcal X_{(0,1]}(\rho (x))+\rho (x)^{2\varepsilon}\mathcal X_{(1,\infty )}(\rho (x))\Big),
\end{equation}
for $x\in \mathbb R^d$ and $s\in (0,\min\left\{\tfrac{\theta}{2},\tfrac{\delta}{2}\right\})$.

Finally, taking into account the estimation for $h_{\rho(x)}$ in Lemma \ref{techn}, we obtain that, for each $\varepsilon >0$, 
\begin{equation}\label{Omega4}
|\Omega_{f,4}(x,s)|\leq Cs|f(x)|\left(\frac{\mathcal X_{(0,1]}(\rho (x))}{\rho (x)^{2+\varepsilon}}+\rho (x)^{2\varepsilon}\mathcal X_{(1,\infty )}(\rho (x)\right),
\end{equation}
for $x\in \mathbb R^d$ and $s\in (0,\min\left\{\tfrac{\theta}{2},\tfrac{\delta}{2}\right\})$.

Let $\mathbb K$ be a compact in $\mathbb R^d$. We can find $x_1,\ldots,x_\ell \in \mathbb R^d$ with $\ell \in \mathbb N$ such that
$\mathbb K\subset \cup_{i=1}^\ell B(x_i,\rho (x_i))$. By using \cite[Proposition 1]{DGMTZ} we deduce that, for a certain $C>1$,
\[
\frac{\rho (x_i)}{C}\leq \rho (y)\leq C\rho (x_i),\quad y\in B(x_i,\rho (x_i)),\,i=1,\ldots, \ell.
\]
Then, there exists $0<A<B$ such that $A\leq \rho (y)\leq B$, $y\in \mathbb K$. 

From \eqref{Omega1a}, \eqref{Omega2}, \eqref{Omega3} and \eqref{Omega4}, we deduce that, for every $f\in L^1(\mathbb R^d)\cap L^\infty(\mathbb R^d)\cap {\textup{Lip}}^\theta _{{\rm loc, uni}}(\mathbb R^d)$,
\begin{equation}\label{sumOmegaLinfty}
\left|\sum_{j=1}^4\Omega_{f,j}(x,s)\right|\leq Cs(1+|f(x)|),\quad x\in \mathbb R^d,\;s\in 
\left(0,\min\left\{\tfrac{\theta }{2},\tfrac{\delta}{4}\right\}\right).
\end{equation}
If $f\in C_c(\mathbb R^d)\cap {\textup{Lip}}^\theta _{{\rm loc, uni}}(\mathbb R^d)$, and $\supp f\subset B(0,R)$, with $R>1$, then by using \eqref{Omega1b}, \eqref{Omega2}, \eqref{Omega3} and \eqref{Omega4}, we get that, for each $\varepsilon >0$, 
\begin{equation}\label{sumOmegaLp}
\left|\sum_{j=1}^4\Omega_{f,j}(x,s)\right|=|\Omega_{f,1}(x,s)|\leq Cs|f(x)||x|^{-(d-\varepsilon)},
\end{equation}
provided that $|x|\geq 2R$, and $s\in (0,\min\{\tfrac{\theta}{2},\delta/4\})$. Here  $C>0$ does not depend on $x$ nor $s$.

Therefore, if $f\in L^1(\mathbb R^d)\cap L^\infty (\mathbb R^d)\cap {\textup{Lip}}^\theta _{{\rm loc, uni}}(\mathbb R^d)$ by virtue of \eqref{sumOmegaLinfty} we get $\|\Omega _V(\cdot ,s)\|_{L^\infty (\mathbb R^d)}\leq Cs$, and consequently,
\[
\lim_{s\rightarrow 0^+}\|\Omega _f(\cdot ,s)\|_{L^\infty (\mathbb R^d)}=0.
\]
Let us consider now $1<p<\infty$ and suppose that $f\in  C_c(\mathbb R^d)\cap {\textup{Lip}}^\theta _{{\rm loc, uni}}(\mathbb R^d)$. Choose $R>1$ such that ${\rm supp }\,f\subset B(0,R)$. Using \eqref{sumOmegaLinfty} and \eqref{sumOmegaLp} we deduce that, when $0<\varepsilon<d(1-1/p)$, 
\begin{align*}
\|\Omega_f(\cdot,s)\|_{L^p(\mathbb R^d)}&\leq \|\Omega_f(\cdot,s)\|_{L^p(B(0,2R))}+\|\Omega_f(\cdot,s)\|_{L^p(\mathbb R^d\setminus B(0,2R))}\\
&\leq C(1+\|f\|_{L^\infty (\mathbb R^d)})s\Big(1+\Big(\int_{|x|\geq 2R}\frac{dx}{|x|^{(d-\varepsilon)p}}\Big)^{1/p}\Big)\\
&\leq Cs\Big(1+\Big(\int_{2R}^\infty r^{d(1-p)+\varepsilon p-1}dr\Big)^{1/p}\Big)\leq Cs,\quad s\in \big(0,\min\{\tfrac{\theta}{2},\tfrac{\delta}{4}\}\big).
\end{align*}
Thus, it follows that
\[
\lim_{s\rightarrow 0^+}\|\Omega_f(\cdot ,s)\|_{L^p(\mathbb R^d)}=0.
\]

\section{Proof of Theorem \ref{Th1.3}}
We argue as in the proof of Theorem \ref{Th1.2}. Let $n\in \mathbb N$ and assume that $f\in L^1(\mathbb R^d)\cap {\textup{Lip}}^\theta _{{\rm loc}, {\rm uni}}(\mathbb R^d)$.

We define $\Lambda_f(x,s)$, for each $x\in \mathbb R^d$ and $s\in (0,1/4)$, as
\[
\Lambda _f(x,s):=\frac{1}{s^n}\Big(\mathcal L_V^{-s}(f)(x)-f(x)-\sum_{k=1}^n\frac{(-s)^k}{k!}(\mathbb{L}\textup{og}^k\mathcal L_V)f(x)\Big).
\]
Our aim is to establish that $\lim_{s\rightarrow 0^+}\Lambda_f(\cdot,s)=0$, in $L^p(\mathbb R^d)$, $1<p<\infty$, when $f\in C_c(\mathbb R^d)\cap {\textup{Lip}}_{\textup{loc},\textup{uni}}^\theta (\mathbb R^d)$, and in $L^\infty (\mathbb K)$, for every compact $\mathbb K\subset \mathbb R^d$, provided that $f\in L^1(\mathbb R^d)\cap L^\infty (\mathbb R^d)\cap {\textup{Lip}}_{\textup{loc},\textup{uni}}^\theta (\mathbb R^d)$. 

Let $\varepsilon \in (0,d)$. According to Remark \ref{rem} we have that, for every $s\in (0,\min\left\{\tfrac{\varepsilon }{2},\tfrac{1}{4}\right\})$,
\[
 \mathcal{L}_V^{-s}(f)(x)-f(x)=\frac{s}{\Gamma (1+s)}\sum_{j=1}^3R_j(x,s)+f(x)\Big(\frac{\rho (x)^{2s}}{\Gamma (1+s)}-1\Big),\quad x\in \mathbb R^d,
\]
where $R_j(x,s)$, $j=1,2,3$, are the terms in \eqref{sumR}. 

Consider $a_{k,\ell}$, $k=1,\ldots, n$, $\ell =0,\ldots, k$, as in \eqref{akl} and decompose $\Lambda _f$ as $\Lambda _f=\sum_{j=1}^4\Lambda _{f,j}$, where, for $j=1,2,3$, 
\[
\Lambda_{f,j}(x,s)=\frac{1}{s^n}\Big(\frac{s}{\Gamma (1+s)}R_j(x,s)-\sum_{k=1}^n\frac{(-s)^k}{k!}\sum_{\ell =1}^k\ell a_{k,\ell}J_j^{(\ell -1)}(x,0)\Big),
\]
when $x\in \mathbb R^d$ and $s\in (0,\min\left\{\tfrac{\varepsilon}{2},\tfrac{1}{4}\right\})$, and
\[
\Lambda_{f,4}(x,s)=\frac{f(x)}{s^n}\Big(\frac{\rho (x)^{2s}}{\Gamma (1+s)}-1-\sum_{k=1}^n\frac{(-s)^k}{k!}\sum_{\ell =0}^ka_{k,\ell}(2\log \rho (x))^\ell \Big),
\]
for $x\in \mathbb R^d$ and $s\in (0,1)$. Here $J_j^{(r)}(\cdot,0)$, $r\in \mathbb N\cap\{0\}$, is given by \eqref{sumJ} (for $s=0$). 

Let $G$ and $H$ the functions given by \eqref{functionG} and \eqref{functionH}, respectively. We have that, for every $x\in \mathbb R^d$ and $s\in (0,\min\left\{\tfrac{\varepsilon}{2},\tfrac{1}{4}\right\})$,
\begin{align*}
    &\Lambda_{f,1}(x,s)=-\frac{1}{s^n}\int_0^\infty\int_{\mathbb R^d}\big(f(y)-f(x)\mathcal X_{B(x,1)}(y)\big)T_t^V(x,y)dyG(-s,t)dt,\\
    &\Lambda_{f,2}(x,s)=-\frac{f(x)}{s^n}\int_0^{\rho (x)^2}\int_{\mathbb R^d}(T_t^V(x,y)-T_t(x-y))dyG(-s,t)dt,\\
    &\Lambda_{f,3}(x,s)=-\frac{f(x)}{s^n}\left(\int_{\rho (x)^2}^\infty\int_{B(x,1)}-\int_0^{\rho (x)^2}\int_{\mathbb R^d\setminus B(x,1)} \right)T_t^V(x,y)dyG(-s,t)dt,
\end{align*}
and
\[
\Lambda_{f,4}(x,s)=\frac{f(x)}{s^n}H(-s,x),\quad s\in (0,1).
\]
In order to estimate $\Lambda_{f,j}$, $j=1,2,3$, we use Lemma \ref{techn}. By \eqref{1.6} we obtain, when $0<\varepsilon<\theta$, for every $x\in \mathbb R^d$ and $s\in (0,(d-\varepsilon)/2)$,
\begin{align*}
|\Lambda_{f,1}(x,s)|&\leq Cs\int_{\mathbb R^d} |f(y)-f(x)\mathcal X_{B(x,1)}(y)|\\
 &\hspace{-1.2cm}\quad \times \left(\int_0^1\frac{e^{-\frac{|x-y|^2}{4t}}}{t^{d/2+1+\tfrac{\varepsilon}{2}}}dt+\int_1^\infty \frac{e^{-\frac{|x-y|^2}{5t}}}{t^{d/2-s+1-\tfrac{\varepsilon}{2}}}dt\right)dy\\
 &\hspace{-1.2cm}\leq Cs\int_{\mathbb R^d} |f(y)-f(x)\mathcal X_{B(x,1)}(y)|\Big(|x-y|^{-d-\varepsilon}+|x-y|^{-d+2s+\varepsilon}\Big)dy\\
&\hspace{-1.2cm}\leq Cs\left(\|f\|_{{\textup{Lip}}^\theta_{\rm loc,uni}(\mathbb R^d)}\int_{B(x,1)}|x-y|^{\theta-d-\varepsilon}dy+\int_{\mathbb R^d\setminus B(x,1)}\frac{|f(y)|}{|x-y|^{d-2s-\varepsilon}}dy\right)\\
&\hspace{-1.2cm}\leq Cs\left(\|f\|_{{\textup{Lip}}^\theta_{\rm loc,uni}(\mathbb R^d)}\int_0^1r^{\theta-\varepsilon-1}dy+\|f\|_{L^1(\mathbb R^d)}\right)\\
&\hspace{-1.2cm}\leq Cs\big(\|f\|_{{\textup{Lip}}^\theta_{\rm loc,uni}(\mathbb R^d)})+\|f\|_{L^1(\mathbb R^d)}\big).
\end{align*}

On the other hand, according to \eqref{TtVTt} we can write, for $0<\varepsilon <\delta=2-d/q$, and each $x\in \mathbb R^d$ and $s\in (0,1)$, 
\begin{align*}
|\Lambda_{f,2}(x,s)|&\leq C\frac{|f(x)|}{s^n\rho (x)^\delta}\int_0^{\rho (x)^2}t^{\tfrac{\delta}{2}}\int_{\mathbb R^d}\left|\phi _t\left(\frac{x-y}{\sqrt{t}}\right)\right||G(-s,t)|dydt\\
&\leq Cs\frac{|f(x)|}{\rho (x)^\delta}\left(\mathcal X_{(0,1]}(\rho (x))\int_0^{\rho (x)^2}t^{\tfrac{\delta}{2}-1-\tfrac{\varepsilon}{2}}dt\right.\\
&\quad +\left.\mathcal X_{(1,\infty )}(\rho (x))\Big(\int_0^1t^{\tfrac{\delta}{2}-1-\tfrac{\varepsilon}{2}}dt+\int_1^{\rho (x)^2}t^{\tfrac{\delta}{2}+s+\tfrac{\varepsilon}{2} -1}dt\Big)\right)\\
&\leq Cs|f(x)|\big(\mathcal X_{(0,1]}(\rho (x))\rho (x)^{-\varepsilon}+\mathcal X_{(1,\infty )}(\rho (x))\rho (x)^{2+\varepsilon}\big).
\end{align*}
For $\Lambda_{f,3}$ we have the following estimation,
\begin{align*}
 |\Lambda_{f,3}(x,s)|&\leq \frac{|f(x)|}{s^n}\\
 &\quad  \times \left(\int_{B(x,1)}\int_{\rho (x)^2}^\infty +\int_{\mathbb R^d\setminus B(x,1)}\int_0^{\rho (x)^2}\right)T_t^V(x,y)|G(-s,t)|dtdy.
 \end{align*}
 Assume that $x\in \mathbb R^d$ such that $\rho (x)\leq 1$. By \eqref{1.6}, for every $\varepsilon \in (0,d)$ and $s\in (0,(d-\varepsilon)/4)$,
 \begin{align*}
    |\Lambda_{f,3}(x,s)|&\leq Cs|f(x)|\left(\int_{B(x,1)}\Big(\int_{\rho (x)^2}^1\frac{e^{-\frac{|x-y|^2}{4t}}}{t^{d/2+1+\tfrac{\varepsilon}{2}}}dt+\int_1^\infty \frac{e^{-\frac{|x-y|^2}{4t}}}{t^{d/2-s+1-\varepsilon /2}}dt\Big)dy\right.\\
    &\left.\quad +\int_{\mathbb R^d\setminus B(x,1)}\int_0^{\rho (x)^2}\frac{e^{-\frac{|x-y|^2}{4t}}}{t^{d/2+1+\tfrac{\varepsilon}{2}}}dtdy\right)\\
    &\leq Cs|f(x)|\left(\rho (x)^{-(d+\varepsilon)}+\frac{1}{d-2s-\varepsilon}+\int_{|x-y|\geq 1}\frac{dy}{|x-y|^{d+\varepsilon}}\right)\\
    &\leq Cs|f(x)|\Big(1+\rho (x)^{-(d+\varepsilon)}\Big)\leq Cs|f(x)|\rho (x)^{-(d+\varepsilon)}. 
\end{align*}
In the case that $x\in \mathbb R^d$ satisfies $\rho (x)>1$, we get, for $s\in (0,\varepsilon /4)$, with $\varepsilon \in (0,d/2)$,
\begin{align*}
    |\Lambda_{f,3}(x,s)|&\leq Cs|f(x)|\left(\int_{B(x,1)}\Big(\int_{\rho (x)^2}^\infty \frac{e^{-\frac{|x-y|^2}{4t}}}{t^{d/2-s+1-\varepsilon /2}}dt\Big)dy\right.\\
    &\left.\quad +\int_{\mathbb R^d\setminus B(x,1)}\Big(\int_0^1\frac{e^{-\frac{|x-y|^2}{4t}}}{t^{d/2+1+\tfrac{\varepsilon}{2}}}dt+\int_1^{\rho (x)^2}\frac{e^{-\frac{|x-y|^2}{4t}}}{t^{d/2-s+1-\tfrac{\varepsilon}{2}}}dt\Big)dy\right)\\
    &\leq Cs|f(x)|\left(\rho (x)^{-(d-2s-\varepsilon)}+\rho(x)^{2\varepsilon}\int_{|x-y|\geq 1}\frac{dy}{|x-y|^{d-2s+\varepsilon}}\right)\\
    &\leq Cs|f(x)|\Big(\rho (x)^{-(d-2s-\varepsilon)}+\rho(x)^{2\varepsilon}\Big)\leq Cs|f(x)|\rho (x)^{3\varepsilon}. 
\end{align*}
Hence, for each $\varepsilon \in (0,d/2)$, 
\[
|\Lambda _{f,3}(x,s)|\leq Cs|f(x)|\big(\mathcal X_{(0,1]}(\rho (x))\rho (x)^{-(d+\varepsilon)}+\mathcal X_{(1,\infty )}(\rho (x))\rho (x)^{3\varepsilon}\big),
\]
provided that $x\in \mathbb R^d$ and $s\in (0,\varepsilon /4)$.

Finally, we observe that, for each $\varepsilon >0$,
\begin{align*}
|\Lambda _{f,4}(x,s)|&=\frac{|f(x)||H(-s,x)|}{s^n}\\
&\hspace{-1cm}\leq Cs|f(x)|\Big(\rho (x)^{-\varepsilon}\mathcal X_{(0,1]}(\rho (x))+\rho (x)^{2+\varepsilon}\mathcal X_{(1,\infty )}(\rho (x)\Big),\quad x\in \mathbb R^d,\,s\in (0,1).
\end{align*}

The above estimations allow us to establish the assertions in Theorem \ref{Th1.3} by proceeding as in the proof of Theorem \ref{Th1.2}.


\bibliographystyle{acm}

\def\cprime{$'$} \def\ocirc#1{\ifmmode\setbox0=\hbox{$#1$}\dimen0=\ht0 \advance\dimen0 by1pt\rlap{\hbox to\wd0{\hss\raise\dimen0 \hbox{\hskip.2em$\scriptscriptstyle\circ$}\hss}}#1\else {\accent"17 #1}\fi}

\end{document}